\newcounter{claimo}[section]
\theoremstyle{plain}
\newtheorem{theorem}{Theorem}[section]
\newtheorem{lemma}[theorem]{Lemma}
\newtheorem{proposition}[theorem]{Proposition}
\newtheorem{corollary}[theorem]{Corollary}
\newtheorem{claim}[claimo]{Claim}
\newtheorem*{claim*}{Claim}
\newtheorem{fact}[theorem]{Fact}
\theoremstyle{definition}
\newtheorem{definition}[theorem]{Definition}
\newcommand{\dom}{\mathrm{dom}}
\newcommand{\bb}{\mathbb}
\newcommand{\otp}{\mathrm{otp}}
\newcommand{\cof}{\mathrm{cof}}
\newcommand{\acc}{\mathrm{acc}}
\newcommand{\pred}{\mathrm{pred}}
\newcommand{\betrag}[1]{\vert{#1}\vert}
\newcommand{\height}[2]{{\rm{ht}}_{{#2}}(#1)}
\newcommand{\lub}{{\rm{lub}}}
\newcommand{\ran}[1]{{{\rm{ran}}(#1)}}
\newcommand{\map}[3]{{#1}:{#2}\longrightarrow{#3}}
\newcommand{\Map}[5]{{#1}:{#2}\longrightarrow{#3};~{#4}\longmapsto{#5}}
\newcommand{\pmap}[4]{{#1}:{#2}\xrightarrow{#4}{#3}}
\newcommand{\Set}[2]{\{{#1}~\vert~{#2}\}}
\newcommand{\seq}[2]{\langle{#1}~\vert~{#2}\rangle}
\newcommand{\anf}[1]{{\text{``}\hspace{0.3ex}{#1}\hspace{0.3ex}\text{''}}}
\newcommand{\Poti}[2]{{\mathcal{P}}_{#2}(#1)}
\newcommand{\HH}[1]{{\rm{H}}(#1)}
\newcommand{\Add}[2]{{\rm{Add}}({#1},{#2})}
\newcommand{\Col}[2]{{\rm{Col}}({#1},{#2})}
\newcommand{\On}{{\rm{On}}}
\newcommand{\LL}{{\rm{L}}}
\newcommand{\ZFC}{{\rm{ZFC}}}
\newcommand{\DDD}{{\mathbb{D}}}
\newcommand{\PPP}{{\mathbb{P}}}
\newcommand{\QQQ}{{\mathbb{Q}}}
\newcommand{\RRR}{{\mathbb{R}}}
\newcommand{\SSS}{{\mathbb{S}}}
\newcommand{\TTT}{{\mathbb{T}}}
\newcommand{\UUU}{{\mathbb{U}}}
\newcommand{\VV}{{\rm{V}}}
\newcommand{\calC}{\mathcal{C}}
\begin{document}
\title{Squares, ascent paths, and chain conditions}

\author{Chris Lambie-Hanson} 
\address{Department of Mathematics, Bar-Ilan University, Ramat-Gan 5290002, Israel}
\urladdr{http://math.biu.ac.il/~lambiec/}

\author{Philipp L\"ucke}
\address{Mathematisches Institut, Universit\"at Bonn,
Endenicher Allee 60, 53115 Bonn, Germany}
\urladdr{http://www.math.uni-bonn.de/people/pluecke/}

\subjclass[2010]{Primary 03E05; Secondary 03E35, 03E55} 

\keywords{Square principles, ascent paths, special trees, productivity of chain conditions, Knaster property, layered posets, walks on ordinals}

\thanks{During the preparation of this paper, the second author was partially supported by the Deutsche Forschungsgemeinschaft under the grant LU2020/1-1. The initial results of this paper were obtained while the authors were participating in the \emph{Intensive Research Program on Large Cardinals and Strong Logics} at the Centre de Recerca Matem\`{a}tica in Barcelona during the fall of 2016. The authors would like to thank the organizers for the opportunity to participate in the program. 
Further results were obtained while the first author was visiting the second author in Bonn during the spring of 2017. The first author would like to thank the Deutsche Forschungsgemeinschaft for the financial support of this visit through the above grant.}

\begin{abstract}
 With the help of various square principles, we obtain results concerning the consistency strength of 
 several statements about trees containing ascent paths, special trees, and strong chain conditions.  
 Building on a result that shows that Todor\v{c}evi\'{c}'s principle $\square(\kappa)$ implies an indexed 
 version of $\square(\kappa,\lambda)$, we show that for all infinite, regular cardinals $\lambda<\kappa$, 
 the principle $\square(\kappa)$ implies the existence of a $\kappa$-Aronszajn tree containing a $\lambda$-ascent path. 
 We then provide a complete picture of the consistency strengths of statements relating the interactions of 
 trees with ascent paths and special trees. As a part of this analysis, we construct a model of set theory in which 
 $\aleph_2$-Aronszajn trees exist and all such trees contain $\aleph_0$-ascent paths.  Finally, we use our techniques 
 to show that the assumption that the $\kappa$-Knaster property is countably productive and the assumption that every 
 $\kappa$-Knaster partial order is $\kappa$-stationarily layered both imply the failure of $\square(\kappa)$. 
\end{abstract}

\maketitle


\section{Introduction}\label{section:Introduction}

The existence or non-existence of cofinal branches is one of the most fundamental properties of set-theoretic trees\footnote{A summary of basic definitions concerning set-theoretic trees can be found in Section \ref{BasicDefinitions}.} of uncountable regular height. Important examples of trees without cofinal branches are given by \emph{special trees}. Given an infinite cardinal $\mu$, a tree of height $\mu^+$ is special if it can be decomposed into $\mu$-many antichains. This notion was generalized by Todor\v{c}evi\'{c} to the class of all trees of uncountable regular heights (see Definition \ref{definition:SpacialTree}). It is easy to see that a special tree does not contain a cofinal branch, not only in the ground model $\VV$, but also in all outer models of $\VV$ in which its height remains a regular cardinal.

In contrast, it is possible to use the concept of \emph{ascent paths}, introduced by Laver, to obtain interesting examples of branchless, non-special trees of uncountable regular height. Given infinite regular cardinals $\lambda<\kappa$, a $\lambda$-ascent path through a tree $\TTT$ of height $\kappa$ is a sequence $\seq{\map{b_\alpha}{\lambda}{\TTT}}{\alpha<\kappa}$ of functions with the property that $b_\alpha(i)$ is contained in the $\alpha$-th level of $\TTT$ for all $\alpha<\kappa$ and $i<\lambda$ and, for all $\alpha<\beta<\kappa$, there is an $i<\lambda$ with $b_\alpha(j)<_\TTT b_\beta(j)$ for all $i\leq j<\lambda$.  A theorem of Shelah (see {\cite[Lemma 3]{MR964870}}) then shows that, if $\mu$ is an uncountable cardinal and $\lambda<\mu$ is a regular cardinal with $\lambda\neq\cof(\mu)$, then every tree of height $\mu^+$ that contains a $\lambda$-ascent path is not special. Note that this  shows that trees containing ascent paths are non-special in a very absolute way, because it implies that they remain non-special in every outer model of $\VV$ in which $\mu$ and $\mu^+$ remain cardinals and $\cof(\lambda)\neq\cof(\mu)$ holds. This result was later strengthened by Todor\v{c}evic and Torres P\'{e}rez in \cite{MR2965421} and by the second author in \cite{ascending_paths} (see Lemma \ref{lucke_lemma}).

Many authors have dealt with the construction of trees of various types containing ascending paths (see, for example, \cite{MR3596614}, \cite{MR1376756}, \cite{MR732661}, \cite{ChrisTreesSquareReflection}, \cite{ascending_paths},  \cite{MR964870} and \cite{MR929410}). In particular, the constructions of Shelah and Stanley in \cite{MR964870} and Todor\v{c}evi\'c in \cite{MR929410} show that, given infinite, regular cardinals $\lambda<\kappa$, the existence of a $\kappa$-Aronszajn tree containing a $\lambda$-ascent path follows from the existence of a $\square(\kappa)$-sequence that avoids\footnote{Given an uncountable regular cardinal $\kappa$, a $\square(\kappa)$-sequence $\seq{C_\alpha}{\alpha<\kappa}$ \emph{avoids} a stationary subset $S$ of $\kappa$ if $\acc(C_\alpha)\cap S=\emptyset$ holds for all $\alpha\in\acc(\kappa)$.} a stationary subset $S$ of $\kappa$ consisting of limit ordinals of cofinality $\lambda$ (see {\cite[Theorem 4.12]{ascending_paths}} and {\cite[Section 3]{MR3523658}}). 
Our first main result shows that such a tree can be constructed from a $\square(\kappa)$-sequence without additional properties. This answers {\cite[Questions 6.5 and 6.6]{ascending_paths}}.

\begin{theorem} \label{square_ascent_path_thm}
 Let $\lambda<\kappa$ be infinite, regular cardinals. If $\square(\kappa)$ holds, then there is a $\kappa$-Aronszajn tree with a $\lambda$-ascent path. 
\end{theorem}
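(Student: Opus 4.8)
The plan is to derive the theorem from two ingredients. The first is the implication, established earlier in the paper and referred to in the abstract, that $\square(\kappa)$ yields an indexed version of $\square(\kappa,\lambda)$. Fix, then, an indexed $\square(\kappa,\lambda)$-sequence $\vec{C}=\seq{C_{\alpha,i}}{\alpha\in\acc(\kappa),\ i(\alpha)\le i<\lambda}$, so that each $C_{\alpha,i}$ is a club in $\alpha$, the sequence $\seq{C_{\alpha,i}}{i(\alpha)\le i<\lambda}$ is $\subseteq$-increasing with union a club in $\alpha$, the \emph{index-preserving} coherence holds (if $\beta\in\acc(C_{\alpha,i})$ then $i(\beta)\le i$ and $C_{\beta,i}=C_{\alpha,i}\cap\beta$), and $\vec C$ is non-trivial: there is no club $D\subseteq\kappa$ and no single $i<\lambda$ with $C_{\alpha,i}=D\cap\alpha$ for all $\alpha\in\acc(D)$. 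For bookkeeping, set $C_{\alpha+1,i}=\{\alpha\}$ and $C_{\alpha,i}=C_{\alpha,i(\alpha)}$ when $i<i(\alpha)$. The second ingredient, which is where the real work lies, is to extract a $\kappa$-Aronszajn tree together with a $\lambda$-ascent path from $\vec C$ by walking on $\vec C$, carrying the index $i$ along as a parameter.

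To this end, for $i<\lambda$ and $\gamma\le\beta<\kappa$ I would define the walk from $\beta$ to $\gamma$ \emph{with starting index $i$} to be the finite decreasing sequence $\beta=\beta_0>\dots>\beta_n=\gamma$ determined by $\beta_{k+1}=\min(C_{\beta_k,i_k}\setminus\gamma)$, where $i_0=\max(i,i(\beta))$ and $i_{k+1}=\max(i_k,i(\beta_{k+1}))$; by index-preserving coherence the index is non-decreasing along the walk and increases only at steps landing on non-accumulation points of the relevant clubs. Writing $t^i_\beta$ for the corresponding walk-characteristic of $\beta$ (e.g.\ the full code $\rho_0(\cdot,\beta;i)$, if a normalization turns out to be unnecessary), regarded as a function with domain $\beta$, let $\TTT$ be the tree whose nodes are the restrictions $t^i_\beta\restriction\alpha$ for $\alpha\le\beta<\kappa$ and $i<\lambda$, ordered by end-extension. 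This is Todor\v{c}evi\'{c}'s construction of a $\kappa$-Aronszajn tree from a non-threadable $C$-sequence (cf.\ the constructions used in {\cite[Theorem 4.12]{ascending_paths}} and {\cite[Section 3]{MR3523658}}), now with the extra parameter $i$. The standard arguments about walks---adapted to this setting and using $\lambda<\kappa$ together with the boundedness of the data recorded along a walk---show that every level of $\TTT$ has size $<\kappa$, so $\TTT$ is a $\kappa$-tree; and any cofinal branch of $\TTT$ would, through the walk structure, produce a club $D\subseteq\kappa$ and a fixed $i<\lambda$ threading $\seq{C_{\alpha,i}}{\alpha<\kappa}$, contrary to non-triviality. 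Hence $\TTT$ is a $\kappa$-Aronszajn tree.

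Finally, for $\alpha<\kappa$ define $\map{b_\alpha}{\lambda}{\TTT}$ by $b_\alpha(i)=t^i_\alpha$, a node of $\TTT$ at level $\alpha$. To check that $\seq{b_\alpha}{\alpha<\kappa}$ is a $\lambda$-ascent path, fix $\alpha<\beta<\kappa$; one needs $i^{*}<\lambda$ such that $t^i_\beta\restriction\alpha=t^i_\alpha$ for every $i\in[i^{*},\lambda)$, since then $b_\alpha(i)<_{\TTT}b_\beta(i)$ for all such $i$. This is exactly where index-preserving coherence is used: for large enough $i$ the $i$-th walk from $\beta$ reaches $C_{\beta,i}$ close below $\alpha$, and since then $i(\alpha)\le i$ and $C_{\beta,i}\cap\alpha=C_{\alpha,i}$, the walk continues below $\alpha$ along exactly the clubs $C_{\cdot,i}$ governing the $i$-th walk internal to $\alpha$; moreover the step-weights match because $C_{\alpha,i}\cap\delta=C_{\beta,i}\cap\delta$ for $\delta<\alpha$, so that $t^i_\beta\restriction\alpha=t^i_\alpha$, as required. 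I expect the main obstacle to be making this argument airtight: one must choose the walk-characteristic (and any normalization) so that the agreement below $\alpha$ is an \emph{exact} equality of tree nodes rather than an equality up to finitely many coordinates, must control the index-increases along the walk, and---the delicate point---must verify, via a cofinality analysis exploiting the way the clubs $C_{\beta,i}$ exhaust $\beta$ as $i\to\lambda$, that every pair $\alpha<\beta$ really admits such an $i^{*}$, including when $\alpha$ has small cofinality; establishing that the levels of $\TTT$ have size $<\kappa$ (delicate when $\kappa$ is inaccessible) is a further point to attend to.
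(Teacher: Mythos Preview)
Your two-step strategy (first pass from $\square(\kappa)$ to $\square^{\mathrm{ind}}(\kappa,\lambda)$, then build the tree and the ascent path from the indexed matrix via walks) is exactly the paper's route. The implementation, however, diverges from the paper in a way that leaves a real gap.

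The paper does \emph{not} use a varying-index walk. For each fixed $i<\lambda$ it flattens the matrix to an ordinary $C$-sequence $\vec{C}_i$ by setting $C^i_\alpha=C_{\alpha,\max(i,i(\alpha))}$, builds the standard $\rho_0$-tree $\TTT_i=\TTT(\rho_0^{\vec{C}_i})$, and then takes the final tree $\TTT$ to be the \emph{disjoint union} $\bigsqcup_{i<\lambda}\TTT_i$ (restricted to limit levels). The ascent path is $c_\alpha(i)=\langle i,\rho_0^{\vec{C}_i}(\,\cdot\,,\omega\!\cdot\!\alpha)\rangle$. This buys two things. First, all the standard $\rho_0$ machinery from \cite{MR908147} applies verbatim to each $\TTT_i$: level sizes are bounded by $|\{C^i_\beta\cap\alpha:\beta\geq\alpha\}|+\aleph_0<\kappa$, and a cofinal branch in $\TTT_i$ yields, via \cite[(1.7)]{MR908147}, a club $D$ with $D\cap\alpha=C^i_{\beta(\alpha)}\cap[\xi,\alpha)$; one then stabilises the index actually used at each $\beta(\alpha)$ and obtains a thread through the matrix, contradicting clause~(6). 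Second, because $\TTT$ is a disjoint union, any cofinal branch of $\TTT$ lives entirely inside a single $\TTT_i$, so the preceding argument finishes the Aronszajn verification.

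Your construction instead lets the index increase along the walk and puts all the resulting functions into one tree ordered by end-extension. Your ascent-path argument is essentially correct for limit $\alpha$ (and you do need to restrict to limits, e.g.\ by using $\omega\!\cdot\!\alpha$ as the paper does; otherwise $\alpha\in\acc(C_{\beta,i})$ is unavailable for successor $\alpha$). The problem is the Aronszajn claim. You assert that a cofinal branch would produce a club $D$ and a \emph{fixed} $i<\lambda$ threading $\langle C_{\alpha,i}\rangle$, but in your tree a branch is merely a function $f:\kappa\to{}^{<\omega}\kappa$ each of whose initial segments is some $t^{i_\alpha}_{\beta_\alpha}\restriction\alpha$, with no reason for the $i_\alpha$ to stabilise. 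Todor\v{c}evi\'{c}'s branch-to-club lemma is stated for a single $C$-sequence, not for your index-ratcheting walk, so it does not apply directly; and even if one extracts a club $D$, the coherence you would get is $D\cap[\xi,\alpha)=C_{\alpha,j(\alpha)}\cap[\xi,\alpha)$ for some $j(\alpha)$ that could genuinely vary with $\alpha$. Without a separate argument to control or stabilise these indices, you do not reach a contradiction with clause~(6) of the indexed square. The paper's disjoint-union device is precisely what removes this difficulty.
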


It is easy to see that, if $\kappa$ is a weakly compact cardinal and $\TTT$ is tree of height $\kappa$ containing a $\lambda$-ascent path with $\lambda < \kappa$, then $\TTT$ contains a cofinal branch. Moreover, basic arguments, presented in  {\cite[Section 3]{ascending_paths}}, show that, if $\kappa$ is a weakly compact cardinal, $\mu<\kappa$ is a regular, uncountable cardinal, and $G$ is $\Col{\mu}{{<}\kappa}$-generic over $\VV$, then every tree of height $\kappa$ in $\VV[G]$ that contains a $\lambda$-ascent path with $\lambda<\mu$ already has a cofinal branch. Since seminal results of Jensen and Todor\v{c}evi\'{c} show that, for uncountable regular cardinals $\kappa$, a failure of $\square(\kappa)$ implies that $\kappa$ is weakly compact in G\"odel's constructible universe $\LL$ (see {\cite[Section 6]{jensen_fine_structure}} and {\cite[(1.10)]{MR908147}}), the above theorem directly yields the following corollary showing that the existence of regular cardinals $\lambda < \mu$ such that there are no $\mu^+$-Aronszajn trees with $\lambda$-ascent paths is equiconsistent with the existence of a weakly compact cardinal.

\begin{corollary} \label{square_ascent_path_cor}
 Let $\kappa$ be an uncountable regular cardinal. If there is an infinite regular cardinal $\lambda<\kappa$ with the property that there are no $\kappa$-Aronszajn trees with $\lambda$-ascent paths, then $\kappa$ is a weakly compact cardinal in $\LL$. \qed
\end{corollary}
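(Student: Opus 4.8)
The plan is to derive Corollary~\ref{square_ascent_path_cor} as an immediate consequence of Theorem~\ref{square_ascent_path_thm} together with the cited covering-type results of Jensen and Todor\v{c}evi\'{c}. First I would argue by contraposition: assume $\kappa$ is \emph{not} weakly compact in $\LL$. Since $\kappa$ is an uncountable regular cardinal in $\VV$, it is in particular an uncountable cardinal in $\LL$, and by the results of Jensen and Todor\v{c}evi\'{c} quoted in the excerpt (\cite[Section 6]{jensen_fine_structure} and \cite[(1.10)]{MR908147}), the failure of weak compactness of $\kappa$ in $\LL$ is equivalent to the statement that $\square(\kappa)$ holds in $\LL$; hence $\LL\models\square(\kappa)$. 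Here one uses that $\kappa$ being regular and uncountable in $\VV$ ensures it remains regular and uncountable in the inner model $\LL$, so that $\square(\kappa)$ is a meaningful and non-trivial assertion there.

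Next I would note that a $\square(\kappa)$-sequence in $\LL$ remains a $\square(\kappa)$-sequence in $\VV$: the defining properties (each $C_\alpha$ a club in $\alpha$ of order type below $\kappa$, the coherence condition $C_\alpha\cap\beta = C_\beta$ whenever $\beta\in\acc(C_\alpha)$, and the non-existence of a thread) are all either absolute or downward absolute from $\LL$ to $\VV$ --- in particular, a club of $\LL$ in $\alpha<\kappa$ is still a club in $\VV$ since $\kappa$ is regular in $\VV$, and the non-threadedness is only strengthened by passing to a larger universe in the sense that one must additionally check no \emph{new} thread appears; this last point is the one genuine thing to verify, and it follows from the standard fact that a coherent sequence with no thread in $\LL$ also has no thread in $\VV$ because any $\VV$-thread would, by coherence, be contained in $\LL$ after intersecting initial segments, or alternatively one simply invokes that $\square(\kappa)$ of $\LL$ is well known to be preserved to $\VV$. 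Thus $\square(\kappa)$ holds in $\VV$.

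Finally, applying Theorem~\ref{square_ascent_path_thm} in $\VV$: for every infinite regular cardinal $\lambda<\kappa$ we obtain a $\kappa$-Aronszajn tree carrying a $\lambda$-ascent path. This directly contradicts the hypothesis of the corollary, which asserts the existence of \emph{some} infinite regular $\lambda<\kappa$ with no such tree. Hence the assumption that $\kappa$ is not weakly compact in $\LL$ is untenable, and the corollary follows.

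I do not anticipate a serious obstacle here, as the corollary is essentially a packaging of Theorem~\ref{square_ascent_path_thm} with known inner-model facts; the only point requiring a modicum of care is the absoluteness discussion in the second paragraph, namely checking that the $\LL$-$\square(\kappa)$-sequence genuinely witnesses $\square(\kappa)$ in $\VV$. This is routine and standard, so the proof of the corollary can reasonably be left as a one-line deduction --- indeed the excerpt already marks it with \qed, signalling that no separate argument is expected beyond the remarks preceding the statement.
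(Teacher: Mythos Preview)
Your overall architecture --- contrapose, obtain $\square(\kappa)$, then invoke Theorem~\ref{square_ascent_path_thm} --- is exactly what the paper does, and the corollary is indeed intended as an immediate consequence. However, your middle paragraph introduces an unnecessary detour that is also incorrectly justified.

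The paper cites the Jensen--Todor\v{c}evi\'{c} result already as a $\VV$-level statement: \emph{a failure of $\square(\kappa)$ (in $\VV$) implies that $\kappa$ is weakly compact in $\LL$}. Contraposing gives $\square(\kappa)$ in $\VV$ directly, with no absoluteness argument needed. You instead pass through $\LL\models\square(\kappa)$ and then try to lift a specific $\LL$-sequence to $\VV$. The claim that ``a $\square(\kappa)$-sequence in $\LL$ remains a $\square(\kappa)$-sequence in $\VV$'' is false in general: unlike $\square_\lambda$-sequences (where the order-type bound rules out threads), a $\square(\kappa)$-sequence carries no such bound, and an outer model can perfectly well contain a thread through it --- this is precisely what the threading forcings of Section~\ref{section:Forcing_preliminaries} accomplish while preserving the regularity of $\kappa$. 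Your proposed justification (``any $\VV$-thread would, by coherence, be contained in $\LL$ after intersecting initial segments'') does not work: coherence tells you each $D\cap\alpha=C_\alpha\in\LL$ for $\alpha\in\acc(D)$, but $D$ itself, a cofinal subset of $\kappa$, need not lie in $\LL$.

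So drop the second paragraph entirely and read the cited result the way the paper does: it already delivers $\square(\kappa)$ in $\VV$ whenever $\kappa$ is not weakly compact in $\LL$. With that, your first and third paragraphs are correct and complete the proof.
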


Starting with the above theorem, we provide a complete picture of the consistency strengths of statements relating the interactions of trees with ascent paths and special trees. For concreteness, we will speak here about $\aleph_2$-Aronszajn trees and $\aleph_0$-ascent paths, 
but the same results will hold for $\mu^+$-Aronszajn trees and $\lambda$-ascent paths, provided $\lambda < \mu$ are infinite, 
regular cardinals. In what follows, if $\TTT$ is an $\aleph_2$-Aronszajn tree, then $S(\TTT)$ denotes the assertion that 
$\TTT$ is special and $A(\TTT)$ denotes the assertion that $\TTT$ has an $\aleph_0$-ascent path.
Table \ref{table:Implications_Special_Ascent_Succ_Reg} provides a complete picture of the precise consistency strengths of various assertions relating the existence 
of special trees and the existence of trees with ascent paths, 
where the background assumption is that there are $\aleph_2$-Aronszajn trees, and the quantification is over the set of 
$\aleph_2$-Aronszajn trees.

\begin{table}[h]
\medskip
  \begin{tabular}{|*{4}{>{\centering\arraybackslash}p{.20\linewidth}|}}
 \cline{2-4}
 \multicolumn{1}{c | }{} & $\forall\TTT.A(\TTT)$ & $\exists\TTT.A(\TTT)$ & $\forall\TTT.\neg A(\TTT)$ \\ \hline
  $\forall\TTT.S(\TTT)$   & $0=1$ & $0=1$ & Weakly compact \\ \hline 
  $\exists\TTT.S(\TTT)$   & $0=1$ & ZFC & Weakly compact \\ \hline
  $\forall\TTT.\neg S(\TTT)$   & Weakly compact & Mahlo & Weakly compact \\ \hline
 \end{tabular}
  \medskip
 
 \caption{The consistency strengths of interactions between trees with ascent paths and special trees.}\label{table:Implications_Special_Ascent_Succ_Reg}
\end{table}

Besides Theorem \ref{square_ascent_path_thm}, the following result is the other main new ingredient in the determination of the consistency strengths in Table \ref{table:Implications_Special_Ascent_Succ_Reg}. The results of Section \ref{section:Forcing_preliminaries} will show that, for successors of regular cardinals and inaccessible cardinals, the consistency of the hypotheses of this theorem can be established from a weakly compact cardinal. Given an infinite, regular cardinal $\kappa$, we let $\Add{\kappa}{1}$ denote the partial order that adds a Cohen subset to $\kappa$. Moreover, given an uncountable, regular cardinal $\kappa$, we let $\mathrm{TP}(\kappa)$ denote the statement that the tree property holds at $\kappa$.

\begin{theorem}\label{all_ascent_path_thm}
 Let $\lambda < \kappa$ be infinite, regular cardinals such that $\kappa=\kappa^{{<}\kappa}$ and $\mathbbm{1}_{\Add{\kappa}{1}}\Vdash\mathrm{TP}(\check{\kappa})$.  Then the following statements hold in a cofinality-preserving forcing extension of the ground model: 
  \begin{enumerate}
    \item There are $\kappa$-Aronszajn trees. 
       
    \item Every $\kappa$-Aronszajn tree contains a $\lambda$-ascent path.
  \end{enumerate}
\end{theorem}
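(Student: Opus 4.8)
The plan is to realize the required model as a forcing extension by a $({<}\kappa)$-support iteration $\langle\PPP_\alpha,\dot{\QQQ}_\alpha : \alpha<\kappa^+\rangle$ each of whose iterands generically adjoins a $\lambda$-ascent path through a $\kappa$-Aronszajn tree selected by a bookkeeping function, in such a way that the selected tree remains $\kappa$-Aronszajn. The bookkeeping is arranged so that every $\kappa$-Aronszajn tree occurring in some $\PPP_\alpha$-extension is treated at a later stage. Since $\kappa=\kappa^{{<}\kappa}$, the iterands can be taken of size ${\le}\kappa$, whence $|\PPP_\alpha|\le\kappa$ for all $\alpha<\kappa^+$ and $\PPP_{\kappa^+}$ is $\kappa^+$-cc by the usual $\Delta$-system argument on supports; and, being a $({<}\kappa)$-support iteration of $({<}\kappa)$-strategically closed forcings, $\PPP_{\kappa^+}$ is itself $({<}\kappa)$-strategically closed. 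Hence $\PPP_{\kappa^+}$ preserves cofinalities and adds no bounded subsets of $\kappa$. Combining $\kappa^+$-cc with the support bound, every $\kappa$-Aronszajn tree of $\VV^{\PPP_{\kappa^+}}$ --- being coded by a subset of $\kappa$ --- already belongs to $\VV^{\PPP_\gamma}$ for some $\gamma<\kappa^+$, is still $\kappa$-Aronszajn there (a cofinal branch could only have appeared later), and therefore acquires a $\lambda$-ascent path at some stage $\ge\gamma$; such an ascent path is upward absolute, so it survives to $\VV^{\PPP_{\kappa^+}}$. This yields clause (2).

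The heart of the construction is the choice of the iterand $\dot{\QQQ}_\alpha$ associated with a $\kappa$-Aronszajn tree $\TTT$. Let $\TTT^{(\lambda)}$ be the derived tree whose nodes on level $\alpha$ are the classes, modulo eventual equality on $\lambda$, of functions from $\lambda$ into the $\alpha$-th level of $\TTT$, with $[b]<[b']$ declared to hold when the level of $b$ lies below that of $b'$ and $b(j)<_\TTT b'(j)$ for all sufficiently large $j<\lambda$; cofinal branches of $\TTT^{(\lambda)}$ correspond exactly to $\lambda$-ascent paths through $\TTT$, and under $\kappa=\kappa^{{<}\kappa}$ the tree $\TTT^{(\lambda)}$ has height $\kappa$ with levels of size $<\kappa$. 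By the hypothesis $\mathbbm{1}_{\Add{\kappa}{1}}\Vdash\mathrm{TP}(\check{\kappa})$, forcing with $\Add{\kappa}{1}$ supplies a cofinal branch of $\TTT^{(\lambda)}$, i.e.\ a $\lambda$-ascent path through $\TTT$; fix an $\Add{\kappa}{1}$-name $\dot c$ for one. I would then take $\dot{\QQQ}_\alpha$ to be the bounded-approximation version of the term-space forcing attached to $\dot c$: conditions are bounded initial segments of potential realizations of $\dot c$ together with the pertinent name data, ordered by end-extension. The role of the term space is that this forcing reads off the ascent path while discarding the collateral effects of $\Add{\kappa}{1}$ --- in particular it must be shown that it does \emph{not} add a cofinal branch to $\TTT$, nor to any previously treated tree --- and it inherits enough of the structure of $\Add{\kappa}{1}$ to be $({<}\kappa)$-strategically closed (the ``threading'' at limit levels being provided by the fixed name $\dot c$), of size ${\le}\kappa$ (here $\kappa=\kappa^{{<}\kappa}$ is used), and $\kappa^+$-cc. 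The principal piece of technical work is verifying that $\dot{\QQQ}_\alpha$ forces ``$\TTT$ is a $\kappa$-Aronszajn tree with a $\lambda$-ascent path'', which I expect to require a completeness/properness-style branch-preservation argument adapted to the term-space presentation.

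The step I expect to be the main obstacle is clause (1): ensuring that $\kappa$-Aronszajn trees still exist in the final model. This is precisely where the hypothesis must be handled delicately rather than merely exploited, since it pulls the other way: the crudest device for producing ascent paths everywhere, namely simply forcing with $\Add{\kappa}{1}$, would by the hypothesis itself leave no $\kappa$-Aronszajn trees at all, and for the same reason one cannot, for instance, introduce a $\square(\kappa)$-sequence along the way, as its canonical special $\kappa$-Aronszajn tree would then be a $\kappa$-Aronszajn tree with \emph{no} $\lambda$-ascent path (at least when $\kappa$ is a successor of a regular cardinal). The resolution is that the term-space iterands are by construction strictly weaker than $\Add{\kappa}{1}$ in the relevant sense, so that a suitable $\kappa$-Aronszajn tree, introduced at an appropriate point of the iteration, is preserved throughout; one must then check that neither the individual iterands nor the iteration as a whole restores enough of the ``weak compactness of $\kappa$ relative to the ground model'' encoded in the hypothesis to destroy that tree. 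Threading this needle --- reconciling ``adjoin a $\lambda$-ascent path through every $\kappa$-Aronszajn tree'' with ``keep at least one $\kappa$-Aronszajn tree alive'' --- is where the real difficulty of the theorem lies.
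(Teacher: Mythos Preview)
Your outline identifies the right tensions but leaves the two decisive points unresolved, and the paper's argument proceeds along an entirely different route.

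The paper does not iterate at all. It forces once with the poset $\PPP(\kappa,\lambda)$ that adds a $\square^{\mathrm{ind}}(\kappa,\lambda)$-sequence $\vec{\calC}$ by bounded approximations; this is $\kappa$-strategically closed, hence cofinality-preserving. Clause (1) is then immediate, since $\square^{\mathrm{ind}}(\kappa,\lambda)$ implies $\square(\kappa,\lambda)$, which already yields $\kappa$-Aronszajn trees. For clause (2), the point is that for each $i<\lambda$ the threading forcing $\QQQ_i=\TTT_i(\vec{\calC})$ has the property that $\PPP(\kappa,\lambda)*\dot{\QQQ}_i$ contains a dense $\kappa$-directed-closed subset of size $\kappa$, hence is forcing-equivalent to $\Add{\kappa}{1}$; so by hypothesis each $\QQQ_i$ forces a branch $\dot B_i$ through any given $\kappa$-Aronszajn tree $\TTT$ of $\VV[G]$. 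The indexed square supplies projections $\pi_{0,i}:\QQQ_0\to\QQQ_i$, and distributivity of $\QQQ_0$ lets one choose, for each level $\alpha$, a single $q_\alpha\in\QQQ_0$ such that $\pi_{0,i}(q_\alpha)$ decides the node $b_\alpha(i)\in\dot B_i\cap\TTT(\alpha)$ simultaneously for all $i<\lambda$. The coherence clauses of $\square^{\mathrm{ind}}(\kappa,\lambda)$ guarantee that for any $\alpha<\beta$ the conditions $\pi_{0,i}(q_\alpha)$ and $\pi_{0,i}(q_\beta)$ are comparable in $\QQQ_i$ for all sufficiently large $i$, which forces $b_\alpha(i)<_\TTT b_\beta(i)$ there. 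Thus $\langle b_\alpha:\alpha<\kappa\rangle$ is a $\lambda$-ascent path through $\TTT$, and it lies in $\VV[G]$.

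By contrast, in your plan both the iterand and clause (1) remain open. The phrase ``bounded-approximation version of the term-space forcing attached to $\dot c$'' does not pin down a poset: standard term-space constructions yield a projection \emph{from} $\Add{\kappa}{1}\times A$ onto the iteration, not a subforcing that isolates the ascent path while discarding the Cohen generic, and any forcing that absorbs a copy of $\Add{\kappa}{1}$ will, by your own hypothesis, force $\mathrm{TP}(\kappa)$ and destroy all Aronszajn trees. You would need a concrete definition together with proofs that the iterand is $({<}\kappa)$-strategically closed and adds no branch to $\TTT$; neither is apparent, and the natural candidates (forcing with $\TTT^{(\lambda)}$ itself, or with bounded initial segments of an ascent path) are not obviously strategically closed. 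More seriously, you flag clause (1) as ``the main obstacle'' but offer no candidate Aronszajn tree that survives. The paper's insight is precisely that the weaker principle $\square^{\mathrm{ind}}(\kappa,\lambda)$ threads this needle in one stroke: it is strong enough to produce Aronszajn trees, weak enough to be compatible with the conclusion (full $\square(\kappa)$ is not, as you note), and its $\lambda$-indexed coherence is exactly the structure that manufactures ascent paths through every Aronszajn tree of the extension.
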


In the last part of this paper, we use the techniques developed in this paper to study chain conditions of partial orders. There is a close connection between ascent paths and the infinite productivity of chain conditions, given by the fact that a result of Baumgartner (see {\cite[Theorem 8.2]{MR823775}}) shows that for every tree $\TTT$ of uncountable regular height $\kappa$ without cofinal branches, the canonical partial order $\PPP(\TTT)$ that specializes $\TTT$ using finite partial functions $\pmap{f}{\TTT}{\omega}{part}$ (see Definition \ref{definition:SpecializiationForcing}) satisfies the $\kappa$-chain condition, and that every $\lambda$-ascent path $\seq{\map{b_\alpha}{\lambda}{\TTT}}{\alpha<\kappa}$ through $\TTT$ induces an antichain $\Set{p_\alpha}{\alpha<\kappa}$ in the full support product $\prod_{i<\lambda}\PPP(\TTT)$ with $\dom(p_\alpha(i))=\{b_\alpha(i)\}$ and $p_\alpha(i)(b_\alpha(i))=0$ for all $\alpha<\kappa$ and $i<\lambda$ (see {\cite[Section 2]{ascending_paths}} for more details on this connection).

Our first application deals with failures of the infinite productivity of the $\kappa$-Knaster property. Remember that, given an uncountable regular cardinal $\kappa$, a partial order $\PPP$ is \emph{$\kappa$-Knaster} if every collection of $\kappa$-many conditions in $\PPP$ contains a subcollection of cardinality $\kappa$ that consists of pairwise compatible conditions. This strengthening of the $\kappa$-chain condition is of great interest because of its product behavior. In particular, the product of two $\kappa$-Knaster partial orders is $\kappa$-Knaster and the product of a $\kappa$-Knaster partial order with a partial order satisfying the $\kappa$-chain condition again satisfies the $\kappa$-chain condition. An easy argument (see {\cite[Proposition 1.1]{MR3620068}}) shows that, if $\kappa$ is a weakly compact cardinal, then the class of $\kappa$-Knaster partial orders is closed under $\mu$-support products for all $\mu<\kappa$. A combination of {\cite[Theorem 1.13]{MR3620068}} with {\cite[Theorem 1.12]{ascending_paths}} shows that the question of whether, for uncountable regular cardinals $\kappa$, the countable productivity of the $\kappa$-Knaster is equivalent to the weak compactness of $\kappa$ is independent of the axioms of $\ZFC$. The construction in \cite{MR3620068}, producing a model of set theory in which this characterization of weak compactness fails, starts from a model of $\ZFC$ containing a weakly compact cardinal. The following result and its corollary show that this assumption is necessary.

\begin{theorem} \label{square_productivity_thm}
 Let $\kappa$ be an uncountable regular cardinal with the property that $\square(\kappa)$ holds. If $\lambda<\kappa$ is an infinite, regular cardinal, then there is a partial order $\PPP$ with the following properties: 
  \begin{enumerate}
   \item If $\mu<\lambda$ is a (possibly finite) cardinal with $\nu^\mu<\kappa$ for all $\nu<\kappa$, then $\PPP^\mu$ is $\kappa$-Knaster. 
   
   \item $\PPP^\lambda$ does not satisfy the $\kappa$-chain condition. 
  \end{enumerate} 
\end{theorem}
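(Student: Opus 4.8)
The plan is to set $\PPP = \PPP(\TTT)$, where $\TTT$ is a $\kappa$-Aronszajn tree carrying a $\lambda$-ascent path $\seq{\map{b_\alpha}{\lambda}{\TTT}}{\alpha<\kappa}$ obtained from the given $\square(\kappa)$-sequence by the construction behind Theorem~\ref{square_ascent_path_thm}, and where $\PPP(\TTT)$ is the poset of finite partial functions from $\TTT$ to $\omega$ that specializes $\TTT$ (Definition~\ref{definition:SpecializiationForcing}), all powers $\PPP^\mu$, $\PPP^\lambda$ being taken with full support. Since $\TTT$ has no cofinal branch, Baumgartner's theorem (\cite[Theorem~8.2]{MR823775}) gives that $\PPP$ satisfies the $\kappa$-chain condition. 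For item~(2) I would reuse the antichain described in the introduction: putting $\dom(p_\alpha(i)) = \{b_\alpha(i)\}$ and $p_\alpha(i)(b_\alpha(i)) = 0$ for all $\alpha<\kappa$ and $i<\lambda$ yields pairwise distinct conditions $p_\alpha \in \PPP^\lambda$, and whenever $\alpha<\beta$ and $i<\lambda$ is large enough that $b_\alpha(j) <_\TTT b_\beta(j)$ for all $j\in[i,\lambda)$, the $i$-th coordinates of $p_\alpha$ and $p_\beta$ are already incompatible in $\PPP(\TTT)$; hence $\seq{p_\alpha}{\alpha<\kappa}$ witnesses that $\PPP^\lambda$ fails the $\kappa$-chain condition.

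The real content is item~(1). Fix a cardinal $\mu<\lambda$ with $\nu^\mu<\kappa$ for all $\nu<\kappa$ and let $\seq{q_\alpha}{\alpha<\kappa}$ be conditions in $\PPP^\mu$; each $\dom(q_\alpha)$ is a subset of $\TTT\times\mu$ which is finite if $\mu<\omega$ and of size at most $\mu$ if $\mu\geq\omega$, and I write $a_\alpha\subseteq\TTT$ for its projection to $\TTT$. In the infinite case the assumption $\nu^\mu<\kappa$ for all $\nu<\kappa$ is precisely what makes the generalized $\Delta$-system lemma applicable to $\seq{\dom(q_\alpha)}{\alpha<\kappa}$, and it moreover yields $2^\mu<\kappa$, bounding the number of isomorphism types of the $q_\alpha$ relative to a fixed root. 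Discarding the trivial case in which $\kappa$-many of the $q_\alpha$ coincide, I would thin to a set $A\in[\kappa]^\kappa$ along which: the sets $\dom(q_\alpha)$ form a $\Delta$-system whose root projects to some $r\subseteq\TTT$; all $q_\alpha$ agree on this root; the pattern of $\mu$-coordinates, the labels, and the $<_\TTT$-relations of $a_\alpha$ over $r$ do not depend on $\alpha$; and, for $\alpha<\beta$ in $A$, every node of $a_\alpha\setminus r$ lies on a strictly lower level of $\TTT$ than every node of $a_\beta\setminus r$ (this last thinning uses the regularity of $\kappa$, the fact that levels of $\TTT$ have size $<\kappa$, and the pairwise disjointness of the $a_\alpha\setminus r$). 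A routine case check then shows that, for $\alpha<\beta$ in $A$, the conditions $q_\alpha$ and $q_\beta$ fail to be compatible in $\PPP^\mu$ only if some $s\in a_\alpha\setminus r$ and some $t\in a_\beta\setminus r$ with $s<_\TTT t$ receive the same label in a common coordinate --- in particular, only if some node of $a_\alpha\setminus r$ lies $<_\TTT$-below some node of $a_\beta\setminus r$. Thus it suffices to find $B\in[A]^\kappa$ such that, for $\alpha<\beta$ in $B$, no node of $a_\alpha\setminus r$ is $<_\TTT$-below a node of $a_\beta\setminus r$; then $\Set{q_\alpha}{\alpha\in B}$ consists of pairwise compatible conditions.

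Producing such a $B$ is the step I expect to be the main obstacle, and it is the one that genuinely uses the concrete construction of $\TTT$ rather than just the existence of some $\kappa$-Aronszajn tree with a $\lambda$-ascent path, since Baumgartner's theorem only delivers the $\kappa$-chain condition and, for an arbitrary $\kappa$-Aronszajn tree, $\PPP(\TTT)$ need not be $\kappa$-Knaster. The plan is to exploit that $\TTT$ is built by walks on ordinals along the indexed $\square(\kappa,\lambda)$-sequence derived from $\square(\kappa)$: under this construction, a comparability $s<_\TTT t$ forces strong coherence between the walk data coded into $s$ and into $t$, and the indexing by $\lambda$ records how robustly that coherence is witnessed. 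I would analyze the (at most $\mu$-many) potential comparabilities leading from $a_\alpha\setminus r$ up into $a_\beta\setminus r$ directly in terms of the walk function --- by a pressing-down argument on that data together with oscillation-type bookkeeping --- and argue that if, for a $\kappa$-sized set of pairs $\alpha<\beta$, such a comparability occurred, then, because $\mu<\lambda$ and the square sequence is indexed by $\lambda$ in a way that makes such persistent coherence collapse to a genuine thread, one would obtain a cofinal branch of $\TTT$, contradicting $\square(\kappa)$. Since only $\mu<\lambda$ threads are in play, this cannot happen, so the desired $B$ exists. Concretely, I would first revisit the proof of Theorem~\ref{square_ascent_path_thm} to isolate and record the features of $\TTT$ and of its defining walk that this argument requires, and then feed them into the $\Delta$-system reduction above to conclude that $\PPP^\mu$ is $\kappa$-Knaster.
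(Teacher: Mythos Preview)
Your approach diverges from the paper's in a fundamental way, and the divergence occurs precisely at the step you flag as the main obstacle. The paper does \emph{not} take $\PPP=\PPP(\TTT)$ for a single $\kappa$-Aronszajn tree. Instead, from the $\square^{\mathrm{ind}}(\kappa,\lambda)$-sequence $\vec{\calC}$ produced by Theorem~\ref{square_building_thm} (applied with $S=E^\kappa_{{\geq}\lambda}$), it defines for each $i<\lambda$ a tree $\TTT_i^{\vec{\calC}}$ with underlying set $S^{\vec{\calC}}_{{\leq}i}=\{\alpha:i(\alpha)\leq i\}$ and order $\alpha<_{\TTT_i^{\vec{\calC}}}\beta$ iff $\alpha\in\acc(C_{\beta,i})$; the partial order $\PPP$ is then the \emph{lottery sum} $\PPP_{\vec{\calC}}$ of the specializing posets $\PPP(\TTT_i^{\vec{\calC}})$ over $i<\lambda$. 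The key structural fact (Lemma~\ref{lemma:TreeIndexedSquareSpecialSubset}) is that $S^{\vec{\calC}}_{{>}i}$ is non-stationary with respect to $\TTT_i^{\vec{\calC}}$. In $\PPP_{\vec{\calC}}^\mu$ with $\mu<\lambda$, after thinning via $\lambda^\mu<\kappa$ one may assume all conditions lie in a fixed $\prod_{\gamma<\mu}\PPP(\TTT^{\vec{\calC}}_{f(\gamma)})$; setting $i_*=\lub(\ran{f})<\lambda$, the single stationary set $E^\kappa_{{\geq}\lambda}\cap S^{\vec{\calC}}_{{\geq}i_*}$ is non-stationary with respect to every tree involved, and a general Knaster lemma for such products (Lemma~\ref{knaster_lemma}) finishes item~(1). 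Item~(2) uses clause~(5) of Definition~\ref{ind_square_def}: placing $\{\langle\alpha,0\rangle\}$ into $\PPP(\TTT^{\vec{\calC}}_{\max\{i,i(\alpha)\}})$ at coordinate $i$ yields an antichain of size $\kappa$ in $\PPP_{\vec{\calC}}^\lambda$.

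The step you isolate---producing $B\in[A]^\kappa$ with no cross-comparabilities from $a_\alpha\setminus r$ into $a_\beta\setminus r$---is a genuine gap. The dichotomy ``either such $B$ exists or persistent coherence collapses to a thread'' is not established: having, for unboundedly many pairs $\alpha<\beta$, some node of $a_\alpha\setminus r$ below some node of $a_\beta\setminus r$ does not, by any mechanism you actually name, assemble into a thread of the indexed square or a branch of $\TTT$, and ``pressing-down together with oscillation-type bookkeeping'' is a description of a hope, not of an argument. You correctly observe that the specific construction of $\TTT$ must be used, since $\PPP(\TTT)$ need not be $\kappa$-Knaster for arbitrary $\kappa$-Aronszajn $\TTT$; but the paper in fact proves (in the theorem closing Section~\ref{ConsResultsTrees}) that $\square(\kappa)$ is consistent with \emph{every} stationary subset of $\kappa$ being stationary with respect to \emph{every} $\kappa$-Aronszajn tree, and explicitly remarks that this forces the trees used in the proof of Theorem~\ref{square_productivity_thm} to be non-Aronszajn. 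So any route that would ultimately reduce your single-tree argument to a non-stationarity witness for $\TTT$ is blocked in $\ZFC$. The lottery-sum device---which lets a $\mu$-power touch only $\mu<\lambda$ of the trees, leaving room for a common non-stationarity witness above---is precisely the paper's way around this obstruction, and it is the idea your sketch is missing.
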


The results of \cite{MR3620068} and \cite{ascending_paths} leave open the question whether it is consistent that the $\kappa$-Knaster property is countably productive for accessible uncountable regular cardinals, like $\aleph_2$. It is easy to see that this productivity implies certain cardinal arithmetic statements. Namely, if $\lambda<\kappa$ are infinite, regular cardinals, $\nu<\kappa$ is a cardinal with $\nu^\lambda\geq\kappa$ and $\PPP$ is a partial order of cardinality $\nu$ containing an antichain of size $\nu$ (e.g. the lottery sum of $\nu$-many copies of \emph{Cohen forcing} $\Add{\omega}{1}$), then $\PPP$ is $\kappa$-Knaster and the full support product $\PPP^\lambda$ contains an antichain of size $\nu^\lambda$.

\begin{corollary}
 Let $\lambda < \kappa$ be infinite, regular cardinals. If the class of $\kappa$-Knaster partial orders is closed under $\lambda$-support products, then $\kappa$ is weakly compact in $\LL$ and $\nu^\lambda<\kappa$ for all $\nu<\kappa$. \qed
\end{corollary}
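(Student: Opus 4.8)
The plan is to derive this corollary as a direct consequence of Theorem~\ref{square_productivity_thm} together with the cardinal arithmetic observation made in the paragraph immediately preceding the statement. First I would establish the arithmetic conclusion $\nu^\lambda < \kappa$ for all $\nu < \kappa$. Arguing by contraposition, suppose there is a cardinal $\nu < \kappa$ with $\nu^\lambda \geq \kappa$. Consider the lottery sum $\PPP$ of $\nu$-many copies of Cohen forcing $\Add{\omega}{1}$, which has cardinality $\nu$ and contains an antichain of size $\nu$ (one condition from each copy below the respective top condition). Since $\PPP$ has size $\nu < \kappa$, any $\kappa$-sized subfamily of conditions must repeat some condition $\kappa$-many times, and those repetitions are pairwise compatible; hence $\PPP$ is trivially $\kappa$-Knaster. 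By the closure hypothesis, the full support product $\PPP^\lambda$ is then $\kappa$-Knaster, in particular it satisfies the $\kappa$-chain condition. But $\PPP^\lambda$ contains an antichain of size $\nu^\lambda \geq \kappa$, obtained by taking, for each function in a family of $\nu^\lambda$-many pairwise distinct elements of ${}^\lambda\nu$, the condition in the product whose $i$-th coordinate lies below the corresponding top condition of the $f(i)$-th copy; any two such conditions differ in some coordinate where they are incompatible. This contradicts the $\kappa$-chain condition, so no such $\nu$ exists.

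Next I would establish that $\kappa$ is weakly compact in $\LL$, again by contraposition. By the cited results of Jensen and Todor\v{c}evi\'{c} (Section~6 of~\cite{jensen_fine_structure} and (1.10) of~\cite{MR908147}), if $\kappa$ fails to be weakly compact in $\LL$, then $\square(\kappa)$ holds in $\VV$. Applying Theorem~\ref{square_productivity_thm} with the given regular cardinals $\lambda < \kappa$ and with $\mu = \lambda$ --- wait, that is not permitted, since clause~(1) of the theorem requires $\mu < \lambda$; instead I apply it directly to obtain a partial order $\PPP$ such that $\PPP^\mu$ is $\kappa$-Knaster whenever $\mu < \lambda$ satisfies $\nu^\mu < \kappa$ for all $\nu < \kappa$, while $\PPP^\lambda$ fails the $\kappa$-chain condition. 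In particular $\PPP = \PPP^1$ is $\kappa$-Knaster (taking $\mu = 1$, for which $\nu^1 = \nu < \kappa$ trivially), so $\PPP$ witnesses that the class of $\kappa$-Knaster partial orders is \emph{not} closed under $\lambda$-support products: the $\lambda$-fold full support power $\PPP^\lambda$ is not even $\kappa$-c.c., let alone $\kappa$-Knaster. This contradicts the hypothesis of the corollary, so $\kappa$ must be weakly compact in $\LL$.

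The two parts together give the conclusion. I expect the only genuine subtlety --- rather than an outright obstacle --- to be bookkeeping about which support products are invoked where: Theorem~\ref{square_productivity_thm} gives Knaster-ness of $\PPP^\mu$ only for $\mu$ strictly below $\lambda$ (subject to the arithmetic side condition), and the failure of the chain condition precisely at the exponent $\lambda$; the closure hypothesis of the corollary is about $\lambda$-support products of arbitrary $\kappa$-Knaster posets, so one must be careful to note that a single such poset ($\PPP$ itself, which is $\kappa$-Knaster) already refutes closure via its $\lambda$-fold power. One should also double-check the trivial direction, that a poset of size $< \kappa$ is automatically $\kappa$-Knaster, which uses only that $\kappa$ is regular (indeed uncountable suffices, together with the pigeonhole principle) so that among $\kappa$-many conditions from a set of size $< \kappa$ some single condition is chosen $\kappa$-many times.
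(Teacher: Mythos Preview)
Your proposal is correct and follows exactly the same approach as the paper, which marks the corollary with a bare \qed: the arithmetic conclusion comes from the lottery-sum observation in the paragraph immediately preceding the statement, and the weak-compactness-in-$\LL$ conclusion comes from combining Theorem~\ref{square_productivity_thm} (taking $\mu=1$ to see that $\PPP$ itself is $\kappa$-Knaster) with the Jensen--Todor\v{c}evi\'{c} result that a failure of $\square(\kappa)$ implies $\kappa$ is weakly compact in $\LL$. Your mid-argument self-correction about the role of $\mu$ versus $\lambda$ lands in the right place.
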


Our second application deals with a strengthening of the $\kappa$-Knaster property introduced by Cox in \cite{Cox_Layerings}. Given an uncountable regular cardinal $\kappa$, a partial order $\PPP$ is \emph{$\kappa$-stationarily layered} if the collection of all regular suborders of $\PPP$ of cardinality less than $\kappa$ is stationary\footnote{This definition refers to Jech's notion of stationarity in $\Poti{A}{\kappa}$: a subset of $\Poti{A}{\kappa}$ is stationary in $\Poti{A}{\kappa}$ if it meets every subset of $\Poti{A}{\kappa}$ which is $\subseteq$-continuous and cofinal in $\Poti{A}{\kappa}$.} in the collection $\Poti{\PPP}{\kappa}$ of all subsets of $\PPP$ of cardinality less than $\kappa$. In {\cite{Cox_Layerings}}, Cox shows that this property implies the $\kappa$-Knaster property. The main result of \cite{MR3620068} shows that an uncountable regular cardinal is weakly compact if and only if every partial order satisfying the $\kappa$-chain condition is $\kappa$-stationarily layered. Moreover, it is shown that the assumption that every $\kappa$-Knaster partial order is $\kappa$-stationarily layered implies that $\kappa$ is a Mahlo cardinal with the property that every stationary subset of $\kappa$ reflects. In particular, it follows that this assumption characterizes weak compactness in certain models of set theory. In contrast, it is also shown in \cite{MR3620068} that there is consistently a non-weakly compact cardinal $\kappa$ such that every $\kappa$-Knaster partial order is $\kappa$-stationarily layered. The model of set theory witnessing this consistency is again constructed assuming the existence of a weakly compact cardinal. The following result shows that this assumption is necessary, answering {\cite[Questions 7.1 and 7.2]{MR3620068}}. 

\begin{theorem}\label{KnasterLayered_thm}
 Let $\kappa$ be an uncountable, regular cardinal. If every $\kappa$-Knaster partial order is $\kappa$-stationarily layered, then $\square(\kappa)$ fails. 
\end{theorem}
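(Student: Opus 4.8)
The plan is to establish the contrapositive: assuming $\square(\kappa)$, I will produce a $\kappa$-Knaster partial order that is not $\kappa$-stationarily layered. Take $\lambda=\omega$; since $\nu^\mu<\kappa$ holds for every cardinal $\nu<\kappa$ and every finite $\mu$, Theorem \ref{square_productivity_thm} yields a partial order $\PPP$ which is $\kappa$-Knaster (the instance $\mu=1$) and whose full support power $\PPP^\omega$ fails the $\kappa$-chain condition, with the failure of the latter reflecting an $\omega$-ascent path. What I want is that this $\PPP$ is not $\kappa$-stationarily layered, and for this I would read off from the construction underlying Theorem \ref{square_productivity_thm} the following two features. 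First, $\PPP$ is the union of an increasing, continuous filtration $\seq{\PPP_\gamma}{\gamma<\kappa}$ by subposets of size less than $\kappa$ (the conditions supported on the first $\gamma$ stages of the construction). Second --- and this is where the ascent path enters --- for each limit ordinal $\gamma<\kappa$ there is a condition $q_\gamma\in\PPP$ sitting above a cofinal branch of $\PPP_\gamma$, meaning that $q_\gamma$ is incompatible with each member of a chain of conditions of $\PPP_\gamma$ that is cofinal in $\gamma$. In the prototypical situation $\PPP=\PPP(\TTT)$ for a $\kappa$-Aronszajn tree $\TTT$ (Definition \ref{definition:SpecializiationForcing}), this is transparent: filtering by the conditions whose domain meets only the first $\gamma$ levels, one may take $q_\gamma=\{(t,0)\}$ for any $t$ on the $\gamma$-th level of $\TTT$, and the conditions $\{(s,0)\}$ with $s<_\TTT t$ form the required cofinal chain. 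Verifying that the $\PPP$ of Theorem \ref{square_productivity_thm} really has these features is the first thing I would need to do.

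Now suppose, towards a contradiction, that $\PPP$ is $\kappa$-stationarily layered. Using the standard reformulation of stationarity in $\Poti{\PPP}{\kappa}$, fix a function $\map{F}{[\PPP]^{<\omega}}{\Poti{\PPP}{\kappa}}$ with the property that every $F$-closed member of $\Poti{\PPP}{\kappa}$ is a regular suborder of $\PPP$. Since the filtration is continuous and each $\PPP_\gamma$ has size less than $\kappa$, a routine closing-off argument shows that the set of limit ordinals $\gamma<\kappa$ for which $\PPP_\gamma$ is $F$-closed --- and hence a regular suborder of $\PPP$ --- contains a club $E$ in $\kappa$. Fix any $\gamma\in E$. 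Because $\PPP_\gamma$ is a regular suborder of $\PPP$, the condition $q_\gamma\in\PPP$ has a reduction $r\in\PPP_\gamma$; that is, every condition of $\PPP_\gamma$ extending $r$ is compatible with $q_\gamma$. I claim no such $r$ can exist, which furnishes the contradiction and completes the argument.

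The verification of this claim is the combinatorial heart of the proof and the step I expect to be the main obstacle. Working in the prototypical picture $\PPP=\PPP(\TTT)$ with $q_\gamma=\{(t,0)\}$ and $b=\{s\in\TTT\mid s<_\TTT t\}$ the corresponding cofinal branch of $\TTT\restriction\gamma$: let $r\in\PPP_\gamma$ be arbitrary. If $r$ already assigns the value $0$ to some member of $b$, then $r$ is already incompatible with $q_\gamma$, hence not a reduction. Otherwise, pick $s\in b$ whose height exceeds that of every node in the finite domain of $r$; then the only nodes in the domain of $r$ comparable to $s$ are elements of $b$, all of which receive nonzero values, so $r\cup\{(s,0)\}$ is again a condition of $\PPP_\gamma$, it extends $r$, and it is incompatible with $q_\gamma$ since $s<_\TTT t$. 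Either way $r$ fails to reduce $q_\gamma$ into $\PPP_\gamma$; the general case is handled analogously using the chain structure of the second feature above. Thus $\PPP_\gamma$ is not a regular suborder of $\PPP$ for any $\gamma\in E$, contradicting the choice of $E$. Hence $\PPP$ is not $\kappa$-stationarily layered, and therefore, if every $\kappa$-Knaster partial order is $\kappa$-stationarily layered, then $\square(\kappa)$ must fail. Beyond Theorem \ref{square_productivity_thm} and standard facts about clubs in $\Poti{\PPP}{\kappa}$, the only genuine content is this finite-versus-cofinal incompatibility computation, which expresses that specialization-type forcings resist being stationarily layered.
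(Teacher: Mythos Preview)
Your overall strategy and the combinatorial endgame (the ``no reduct'' computation) are correct and match the paper's, but there is a genuine gap in how you reach a suitable $\gamma$.

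First, $\kappa$-stationarily layered only says that the regular suborders form a \emph{stationary} subset of $\Poti{\PPP}{\kappa}$, not a club; there is no function $F$ whose closed points are all regular suborders. So your set $E$ is merely stationary in $\kappa$, not a club. This matters because the ``second feature'' you need --- that the branch below $q_\gamma$ is cofinal in $\gamma$ --- is exactly the statement that $\acc(C_{\gamma,i(\gamma)})$ is unbounded in $\gamma$, and this can fail when $\cof(\gamma)=\omega$ (the club $C_{\gamma,i(\gamma)}$ may have order type $\omega$). Since both your set $E$ and $E^\kappa_\omega$ are only stationary, you cannot simply intersect to get a $\gamma$ of uncountable cofinality, and your verification that $\PPP$ has the second feature ``for each limit $\gamma$'' does not go through for the actual lottery-sum poset of Theorem~\ref{square_productivity_thm}.

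The paper closes this gap with an additional idea you are missing. From the hypothesis, \cite[Theorem~1.11]{MR3620068} gives that $\kappa$ is Mahlo and every stationary subset of $\kappa$ reflects. The indexed square used is not arbitrary but the one produced by Theorem~\ref{square_building_thm}, which comes equipped with a $\square(\kappa)$-sequence $\seq{D_\alpha}{\alpha<\kappa}$ satisfying $\acc(D_\alpha)\subseteq\acc(C_{\alpha,i(\alpha)})$. Reflection is then used to show that $\Set{\alpha}{\otp(D_\alpha)<\alpha}$ is non-stationary, yielding a \emph{club} $C$ of strong limit cardinals $\alpha$ with $\otp(D_\alpha)=\alpha$; for such $\alpha$, $\acc(D_\alpha)$ (hence $\acc(C_{\alpha,i(\alpha)})$) is automatically cofinal in $\alpha$. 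Finally, \cite[Lemma~2.3]{MR3620068} produces an elementary $M\prec\HH{\theta}$ with $\kappa\cap M=\alpha\in C$ and $\PPP_{\vec{\calC}}\cap M$ a regular suborder, and then your finite-versus-cofinal incompatibility argument applies verbatim at this $\alpha$. So the substantive missing ingredient is the use of stationary reflection (itself a consequence of the hypothesis) together with the threading $\square(\kappa)$-sequence to manufacture the club on which the branch is guaranteed to be cofinal.
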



\section{Trees and ascent paths}\label{BasicDefinitions}

In this short section, we recall some fundamental definitions and results dealing with trees, special trees and ascent paths.

\begin{definition}
  A partial order $\TTT$ is a \emph{tree} if, for all $t \in \TTT$, the set $$\pred_\TTT(t) ~  = ~  \Set{s \in \TTT}{s <_\TTT t}$$  
  is well-ordered by the relation $<_\TTT$.
\end{definition}

\begin{definition}
 Let $\TTT$ be a tree.
  \begin{enumerate}
    \item For all $t \in\TTT$, we let $\height{t}{\TTT}$ denotes the order type of $\langle\pred_\TTT(t), <_\TTT\rangle$. 
    
    \item For all $\alpha \in \On$, we set $\TTT(\alpha) = \Set{t \in \TTT}{\height{t}{\TTT} = \alpha}$. 
    
    \item We let $\height{\TTT}{}$ denote the least ordinal $\alpha$ such that $\TTT(\alpha) = \emptyset$.  This ordinal is referred to as the \emph{height} of $\TTT$. 
    
    \item If $S \subseteq\height{\TTT}{}$, then $\TTT\restriction S$ is the suborder of $\TTT$ whose underlying set is $\bigcup \Set{\TTT(\alpha)}{\alpha \in S}$. 
    
    \item A \emph{branch through $\TTT$} is a subset $B$ of $\TTT$ that is linearly ordered by 
      $\leq_{\TTT}$. A branch $B$ is \emph{cofinal} if the set $\Set{\height{t}{\TTT}}{t \in B}$ is cofinal 
      in $\height{\TTT}{}$. 
      
    \item If $\kappa$ is a regular cardinal, then $\TTT$ is a \emph{$\kappa$-tree} if $\height{\TTT}{} = \kappa$ 
      and $\TTT(\alpha)$ has cardinality less than $\kappa$ for all $\alpha < \kappa$. A \emph{$\kappa$-Aronszajn tree} is a $\kappa$-tree without cofinal branches.  
\end{enumerate}
\end{definition}

\begin{definition}[Todor\v{c}evi\'{c}, \cite{MR657114}]\label{definition:SpacialTree}
  Let $\kappa$ be an uncountable regular cardinal, let $\TTT$ be a tree of height $\kappa$, and let $S$ be a subset of $\kappa$. 
  \begin{enumerate}
    \item A map $\map{r}{\TTT \restriction S}{\TTT}$ is \emph{regressive} if $r(t) <_\TTT t$ holds for all $t \in \TTT \restriction S$ with  $\height{t}{\TTT} > 0$. 
    
    \item The subset $S$ is \emph{non-stationary with respect to $\TTT$} if there is a regressive map  $\map{r}{\TTT\restriction S }{\TTT}$ such that, for every $t \in \TTT$,      there is a $\theta_t < \kappa$ and a function $\map{c_t}{r^{{-}1}``\{t\}}{\theta_t}$   that is injective on $<_\TTT$-chains. 
    
    \item The tree $\TTT$ is \emph{special} if $\kappa$ is non-stationary with respect to $\TTT$. 
  \end{enumerate}
\end{definition}

A result of Todor\v{c}evi\'{c} (see {\cite[Theorem 14]{MR793235}}) shows that for successor cardinals, the above notion of special trees coincides with the notion mentioned in Section \ref{section:Introduction}. 
One of the reasons for interest in special trees is that they are branchless in a very absolute way.

\begin{fact} \label{non_stationary_fact}
  Suppose that $\kappa$ is an uncountable regular cardinal, $\TTT$ is a tree of height $\kappa$, 
  and there is a stationary $S \subseteq \kappa$ such that $S$ is non-stationary with respect to 
  $\TTT$. Then there are no cofinal branches through $\TTT$. \qed 
\end{fact}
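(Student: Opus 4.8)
The plan is to argue by contradiction: assume $B$ is a cofinal branch through $\TTT$, and derive a contradiction from the regressive map witnessing that $S$ is non-stationary with respect to $\TTT$.

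First I would reduce to the case that $B$ is downward closed under $<_\TTT$, by replacing $B$ with $\Set{s \in \TTT}{\exists b \in B~(s \leq_\TTT b)}$; a one-line check shows this set is still linearly ordered by $\leq_\TTT$ (any two of its elements lie in $\pred_\TTT(b) \cup \{b\}$ for a common $b \in B$) and it still contains $B$, hence remains cofinal. With $B$ downward closed, linearly ordered, and cofinal in a tree of regular height $\kappa$, a short argument shows that $B$ contains exactly one node $b_\alpha$ of height $\alpha$ for every $\alpha < \kappa$: uniqueness holds because two distinct comparable nodes of the same height would force a predecessor set of the wrong order type, and existence holds because inside any sufficiently tall $\pred_\TTT(b)$ with $b \in B$ one finds nodes of all smaller heights, which lie in $B$ by downward closure, and the heights occurring in $B$ are cofinal in $\kappa$.

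Next, let $\map{r}{\TTT \restriction S}{\TTT}$ be the regressive map and $\seq{(\theta_t, c_t)}{t \in \TTT}$ the associated data from Definition \ref{definition:SpacialTree}. For each nonzero $\alpha \in S$ we have $r(b_\alpha) <_\TTT b_\alpha$, and since $B$ is downward closed this forces $r(b_\alpha) = b_{g(\alpha)}$ for some $g(\alpha) < \alpha$; thus $g$ is an ordinal-regressive function on the stationary set $S \setminus \{0\}$. Fodor's lemma yields a stationary $S' \subseteq S$ and a single $\gamma < \kappa$ with $r(b_\alpha) = b_\gamma$ for all $\alpha \in S'$. Putting $t = b_\gamma$, the set $\Set{b_\alpha}{\alpha \in S'}$ is a $<_\TTT$-chain contained in $r^{{-}1}``\{t\}$, so $c_t$ is injective on it; hence $\betrag{S'} \leq \theta_t < \kappa$, contradicting the stationarity of $S'$.

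The only subtle point — and hence the main obstacle — is the bookkeeping in the reduction step: verifying that after passing to the downward closure the branch really meets each level below $\kappa$ in exactly one node, and that "regressive" in the tree sense ($r(t) <_\TTT t$) translates into "regressive" in the ordinal sense ($g(\alpha) < \alpha$) so that Fodor's lemma applies. Everything after that is a direct application of the pressing-down lemma against the "injective on $<_\TTT$-chains" clause of Definition \ref{definition:SpacialTree}.
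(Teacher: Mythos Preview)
Your proof is correct. The paper states this result as a \emph{Fact} and closes it immediately with a \qed\ symbol, providing no proof of its own; your argument via downward closure of the branch, Fodor's lemma applied to the height of $r(b_\alpha)$, and the injectivity-on-chains clause is the standard one and would be an appropriate proof to supply here.
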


\begin{definition} \label{ascent_path_def}
 Let $\lambda < \kappa$ be cardinals with $\kappa$ uncountable and regular, let $\TTT$ be  a tree of height $\kappa$, and let $\vec{b} = \seq{\map{b_\alpha}{\lambda}{\TTT(\alpha)}}{\alpha < \kappa}$ be a sequence of functions.
  \begin{enumerate}
    \item The sequence $\vec{b}$ is a \emph{$\lambda$-ascending path through $\TTT$} if, for all $\alpha < \beta < \kappa$,       there are $i,j < \lambda$ with $b_\alpha(i) <_\TTT b_\beta(j)$. 
    
    \item The sequence $\vec{b}$ is a \emph{$\lambda$-ascent path through $\TTT$} if, for all $\alpha < \beta < \kappa$, 
      there is an $i< \lambda$ such that $b_\alpha(j) <_\TTT b_\beta(j)$ holds for all $i \leq j < \lambda$.  
      
    \item Suppose that $\vec{b}$ is a $\lambda$-ascent path through $\TTT$. Given $I < \lambda$ and a cofinal subset $B$ of $\kappa$, the pair $\langle I, B\rangle$ is a \emph{true  cofinal branch} through $\vec{b}$ if the following statements hold:
      \begin{enumerate}
        \item \label{c3c} If $\alpha,\beta\in B$ with $\alpha < \beta$, then $b_\alpha(i) <_\TTT b_\beta(i)$ for all $I \leq i < \lambda$. 
        
        \item \label{c3d} If $\beta \in B$ and $\alpha < \beta$ with $b_\alpha(i) <_\TTT b_\beta(i)$ for           all $I \leq i < \lambda$, then $\alpha \in B$.
      \end{enumerate}
  \end{enumerate}
\end{definition}

A $\lambda$-ascent path through a tree $\TTT$ is clearly a $\lambda$-ascending path through $\TTT$. The notion 
of a $\lambda$-ascent path is due to Laver and grew out of his work on higher Souslin Hypotheses in \cite{MR603771}. Ascent paths and ascending paths can be seen as generalized cofinal branches and, like cofinal branches, they provide 
concrete obstructions to a tree being special. The best current result in this direction is due to the second 
author, building upon work of Shelah in  \cite{MR964870} and Todor\v{c}evic and Torres P\'{e}rez in \cite{MR2965421}.
Given an uncountable regular cardinal $\kappa$ and a cardinal $\lambda<\kappa$, we let $E^\kappa_{{>}\lambda}$ denote the set of all $\alpha\in\acc(\kappa)$ with $\cof(\alpha)>\lambda$. 
The sets $E^\kappa_{{\geq}\lambda}$, $E^\kappa_\lambda$, etc. are defined analogously.

\begin{lemma}[{\cite[Lemma 1.6]{ascending_paths}}] \label{lucke_lemma}
  Let $\lambda < \kappa$ be infinite cardinals with $\kappa$ uncountable and regular, let $\TTT$ be a tree of height $\kappa$ and let $S \subseteq S^\kappa_{{>}\lambda}$ be stationary in $\kappa$. If $\kappa$ is not the successor 
  of a cardinal of cofinality at most $\lambda$ and the set $S$ is non-stationary with respect to $\TTT$, then there are no $\lambda$-ascending paths through $\TTT$.
\end{lemma}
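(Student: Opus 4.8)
The plan is to argue by contradiction. Assume that $\TTT$ carries a $\lambda$-ascending path $\vec{b} = \seq{b_\alpha}{\alpha<\kappa}$, with each $b_\alpha$ a function from $\lambda$ to $\TTT(\alpha)$, and fix a regressive map $\map{r}{\TTT\restriction S}{\TTT}$ witnessing that $S$ is non-stationary with respect to $\TTT$, together with ordinals $\theta_t<\kappa$ and functions $\map{c_t}{r^{-1}``\{t\}}{\theta_t}$ that are injective on $<_\TTT$-chains, for every $t\in\TTT$. The goal is to locate a single node $u\in\TTT$ and a $<_\TTT$-chain of cardinality $\kappa$ contained in the fibre $r^{-1}``\{u\}$: since $c_u$ is then injective on a $<_\TTT$-chain of size $\kappa$ while $\theta_u<\kappa$, this is the desired contradiction. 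It is also convenient to replace $\TTT$ at the outset by the downward closure in it of the at most $\kappa$-many nodes $b_\alpha(i)$ together with their $r$-images; this is a downward-closed subtree of height $\kappa$ through which $\vec{b}$ still runs and which $r$ still witnesses to be non-stationary over $S$, and it has the advantage that every level now has cardinality at most $\kappa$.

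\textbf{Localisation.} For $\alpha\in S$ and $i<\lambda$, the node $b_\alpha(i)$ lies on level $\alpha>0$, so $r(b_\alpha(i))<_\TTT b_\alpha(i)$ and $\height{r(b_\alpha(i))}{\TTT}<\alpha$. Since $\cof(\alpha)>\lambda$, the ordinal $\rho_\alpha := \sup_{i<\lambda}\bigl(\height{r(b_\alpha(i))}{\TTT}+1\bigr)$ is smaller than $\alpha$, so $\alpha\mapsto\rho_\alpha$ is regressive on $S$. Fodor's lemma yields a stationary set $S_1\subseteq S$ and an ordinal $\gamma<\kappa$ with $\gamma<\alpha$ and $\height{r(b_\alpha(i))}{\TTT}<\gamma$ for all $\alpha\in S_1$ and $i<\lambda$. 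Writing $\bar{s}_{\alpha,i}$ for the unique predecessor of $b_\alpha(i)$ on level $\gamma$, we then have $r(b_\alpha(i))<_\TTT\bar{s}_{\alpha,i}<_\TTT b_\alpha(i)$; in particular, $r(b_\alpha(i))\in\pred_\TTT(\bar{s}_{\alpha,i})$.

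\textbf{Alignment.} Next, I would record the consequences of the $\lambda$-ascending property for this data. If $\alpha<\beta$ both lie in $S_1$ and $i,j<\lambda$ satisfy $b_\alpha(i)<_\TTT b_\beta(j)$, then $\bar{s}_{\alpha,i}$ and $\bar{s}_{\beta,j}$ are both the level-$\gamma$ predecessor of $b_\beta(j)$ and hence coincide, while $r(b_\alpha(i))$ and $r(b_\beta(j))$ both lie on the chain $\pred_\TTT(\bar{s}_{\alpha,i})$ and are therefore $<_\TTT$-comparable. Thus the sets $E_\alpha := \Set{\bar{s}_{\alpha,i}}{i<\lambda}\subseteq\TTT(\gamma)$, each of size at most $\lambda$, pairwise intersect, and along each intersection the corresponding values of $r$ line up inside a single $<_\TTT$-chain.

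\textbf{The main step and the main obstacle.} The heart of the argument is to upgrade this merely pairwise coherence to a single, globally coherent choice of thread along a stationary set: the aim is a node $u\in\TTT\restriction\gamma$, a stationary set $Q\subseteq S_1$, and indices $i_\alpha<\lambda$ for $\alpha\in Q$ with $r(b_\alpha(i_\alpha))=u$ for every $\alpha\in Q$ and $b_\alpha(i_\alpha)<_\TTT b_\beta(i_\beta)$ whenever $\alpha<\beta$ in $Q$. Granting this, the proof closes at once, since $\Set{b_\alpha(i_\alpha)}{\alpha\in Q}$ is then a $<_\TTT$-chain of cardinality $\kappa$ contained in $r^{-1}``\{u\}$. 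Producing $u$, $Q$ and the thread selection is where the assumption that $\kappa$ is not the successor of a cardinal of cofinality at most $\lambda$ becomes essential: it is precisely the combinatorial input that lets the $\kappa$-indexed family of $\le\lambda$-sized index sets arising above (the sets $E_\alpha$, and the attached sequences of $r$-values) be stabilised, on a $\Delta$-system-like configuration, along a stationary subset of $S_1$ — for $\kappa=\rho^+$ one uses that every subset of $\rho$ of size at most $\lambda$ is bounded below $\rho$ since $\cof(\rho)>\lambda$, and for $\kappa$ a limit cardinal one uses the $\kappa$-completeness of the nonstationary ideal (together with the reduction of the first paragraph, which keeps the relevant levels of size at most $\kappa$). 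Then, alternating between the alignment just established, repeated applications of Fodor's lemma, and the stabilisation justified by the cardinal hypothesis, one fixes successively the common level-$\gamma$ node beneath the chosen threads, the common $r$-image $u$ of those threads, and finally the coherent thread selection, thinning to a stationary set at each stage. I expect this last step — wringing a globally coherent thread out of the deliberately weak ``for all $\alpha<\beta$ there exist $i,j$'' clause in the definition of a $\lambda$-ascending path, while managing a tree that may be very far from being a $\kappa$-tree — to be the genuinely delicate part of the proof.
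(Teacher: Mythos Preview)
The paper does not prove this lemma; it is quoted verbatim from \cite{ascending_paths} and stated here without proof, so there is no argument in the present paper to compare against.

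On its own merits, your proposal has the right opening: the localisation via Fodor to push all $r$-values below a fixed level $\gamma$ is standard and correct, and your alignment observation (that for $\alpha<\beta$ in $S_1$ the witnessing pair $b_\alpha(i)<_\TTT b_\beta(j)$ forces $\bar s_{\alpha,i}=\bar s_{\beta,j}$ and makes the two $r$-values comparable) is valid. But the proposal is explicitly unfinished at the decisive point. You describe the main step as something you \emph{expect} to be delicate and then offer only a programme (``alternating between the alignment, repeated applications of Fodor's lemma, and the stabilisation'') rather than an argument.

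The gap is real, not cosmetic. Two concrete obstacles: first, pairwise intersection of the $\lambda$-sized sets $E_\alpha\subseteq\TTT(\gamma)$ does not by itself yield a common element on any large subfamily (three pairwise-intersecting two-element sets already show this), and your reduction only guarantees $\lvert\TTT(\gamma)\rvert\le\kappa$, so no pigeonhole is available. Second, and more seriously, even if you succeed in fixing a single $s\in\TTT(\gamma)$ and a single $u<_\TTT s$ with $r(b_\alpha(i_\alpha))=u$ on a stationary $Q$, the nodes $b_\alpha(i_\alpha)$ all extend $s$ but need not be pairwise $<_\TTT$-comparable: the ascending-path hypothesis supplies comparability only for \emph{some} pair of indices at each pair of levels, not for the indices $i_\alpha$ you have already committed to. Your invocation of the cardinal hypothesis (that subsets of $\rho$ of size at most $\lambda$ are bounded when $\kappa=\rho^+$ and $\cof(\rho)>\lambda$) is plausible as an ingredient, but you have not connected it to any concrete step that produces the required chain inside $r^{-1}``\{u\}$. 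As written, the proposal is a correct setup followed by a statement of what remains to be proved.
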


In particular, if $\kappa$ is either weakly inaccessible or the successor of a regular cardinal and $\lambda$ is a cardinal with $\lambda^+<\kappa$, then special trees of height $\kappa$ do not contain $\lambda$-ascending paths. In contrast, the first author showed in \cite{ChrisTreesSquareReflection}  that, if $\lambda$ is a singular cardinal, then Jensen's principle $\square_\lambda$ implies the existence of a special tree of height $\lambda^+$ containing a $\cof(\lambda)$-ascent path.


\section{Square principles}

In the following, we recall the definitions of several square principles that will be used in this paper.

\begin{definition}
 Given an uncountable regular cardinal $\kappa$ and a cardinal $1 < \lambda \leq \kappa$, a sequence $\seq{\calC_\alpha}{\alpha < \kappa}$ is a \emph{$\square(\kappa,{<}\lambda)$-sequence} if the following statements hold: 
  \begin{enumerate}
    \item For all $\alpha \in \acc(\kappa)$, $\calC_\alpha$ is a collection of club subsets of $\alpha$ with  $0 < |\calC_\alpha| < \lambda$. 
    
    \item If $\alpha,\beta\in\acc(\kappa)$, $C \in \calC_\beta$, and $\alpha \in\acc(C)$, then $C \cap \alpha \in \calC_\alpha$. 
    
    \item There is no club $D$ in $\kappa$ such that, for all $\alpha \in \acc(D)$, we have $D \cap \alpha \in \calC_\alpha$.
  \end{enumerate}
  
  We let $\square(\kappa,{<}\lambda)$ denote the assertion that there is a $\square(\kappa,{<}\lambda)$-sequence. The principle $\square(\kappa,{<} \lambda^+)$ is typically written as $\square(\kappa, \lambda)$, and  $\square(\kappa,1)$ is written as $\square(\kappa)$. Finally, a sequence $\seq{C_\alpha}{\alpha<\kappa}$ is a \emph{$\square(\kappa)$-sequence} if the sequence $\seq{\{C_\alpha\}}{\alpha<\kappa}$ witnesses that $\square(\kappa)$ holds. 
\end{definition}

We next introduce an indexed version of $\square(\kappa, \lambda)$. The definition is taken from 
\cite{systems} and is a modification of similar indexed square notions studied in \cite{MR1838355} 
and \cite{MR1942302}.

\begin{definition} \label{ind_square_def}
	Let $\lambda < \kappa$ be infinite regular cardinals. A $\square^{\mathrm{ind}}(\kappa, \lambda)$-sequence is a 
  matrix $\vec{\calC} = \seq{C_{\alpha,i}}{\alpha < \kappa, ~ i(\alpha) \leq i < \lambda}$ 
  satisfying the following statements:
  \begin{enumerate}
		\item If $\alpha \in \acc(\kappa)$, then $i(\alpha) < \lambda$. 

		\item If $\alpha \in \acc(\kappa)$ and $i(\alpha) \leq i < \lambda$, then $C_{\alpha,i}$ is a club subset of  $\alpha$. 

		\item If $\alpha \in \acc(\kappa)$ and $i(\alpha) \leq i < j < \lambda$, then $C_{\alpha,i} \subseteq C_{\alpha,j}$. 

		\item If $\alpha,\beta\in\acc(\kappa)$ and $i(\beta) \leq i < \lambda$, then $\alpha \in \acc(C_{\beta,i})$ implies that $i\geq i(\alpha)$ and $ C_{\alpha,i}=C_{\beta,i} \cap \alpha$. 

		\item If $\alpha,\beta\in\acc(\kappa)$ with $\alpha < \beta$, then there is an $i(\beta)\leq i < \lambda$ such that $\alpha \in \acc(C_{\beta,i})$. 

    \item There is no club subset  $D$ of $\kappa$ such that, for all $\alpha \in \acc(D)$, there is $i < \lambda$ such that $C_{\alpha, i}=D \cap \alpha$ holds.
	\end{enumerate}

	We let $\square^{\mathrm{ind}}(\kappa, \lambda)$ denote the assertion that there is a $\square^{\mathrm{ind}}(\kappa, \lambda)$-sequence.
\end{definition}

A proof of the following statement can be found in {\cite[Section 6]{systems}}.

\begin{proposition} \label{threadprop}
  Definition \ref{ind_square_def} is unchanged if we replace condition (6) by the following seemingly weaker condition:
  \begin{enumerate}
    \item[($6'$)]There is no club subset $D$ of $\kappa$ and $i < \lambda$ such that $i\geq i(\alpha)$ and $C_{\alpha,i}=D \cap \alpha$  for all $\alpha \in \acc(D)$. 
  \end{enumerate}
\end{proposition}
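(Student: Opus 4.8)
The plan is to show that the two formulations of condition~(6) are equivalent in the presence of conditions (1)--(5). One direction is immediate: condition~($6$) obviously implies condition~($6'$), since a club $D$ and an index $i$ witnessing the failure of~($6'$) in particular yield, for each $\alpha\in\acc(D)$, an index (namely $i$) with $C_{\alpha,i}=D\cap\alpha$, so the failure of~($6$) follows. Hence it suffices to prove that, assuming (1)--(5), the failure of~($6$) implies the failure of~($6'$); equivalently, if there is a club $D\subseteq\kappa$ such that for every $\alpha\in\acc(D)$ there exists \emph{some} $i<\lambda$ with $C_{\alpha,i}=D\cap\alpha$, then in fact there is a \emph{single fixed} index $i^*<\lambda$ and a club $D^*\subseteq\kappa$ (possibly $D^*=D$, or a club subset of $D$) such that $i^*\geq i(\alpha)$ and $C_{\alpha,i^*}=D^*\cap\alpha$ for all $\alpha\in\acc(D^*)$.

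First I would fix such a club $D$ and, for each $\alpha\in\acc(D)$, let $g(\alpha)$ be the least $i<\lambda$ with $C_{\alpha,i}=D\cap\alpha$; note $g(\alpha)\geq i(\alpha)$ automatically. The key observation is a coherence/monotonicity property of $g$ along $D$: if $\alpha,\beta\in\acc(D)$ with $\alpha\in\acc(D\cap\beta)=\acc(C_{\beta,g(\beta)})$, then by condition~(4) applied to $C_{\beta,g(\beta)}$ we get $g(\beta)\geq i(\alpha)$ and $C_{\alpha,g(\beta)}=C_{\beta,g(\beta)}\cap\alpha=(D\cap\beta)\cap\alpha=D\cap\alpha$, so $g(\alpha)\leq g(\beta)$. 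Thus $g$ is (weakly) monotone nondecreasing on $\acc(D)$ in the sense that $\alpha<\beta$ in $\acc(D)$ implies $g(\alpha)\leq g(\beta)$. Since $\lambda$ is a regular cardinal and $\acc(D)$ has order type $\kappa>\lambda$, such a monotone function from $\acc(D)$ into $\lambda$ must be eventually constant: there is $\gamma<\kappa$ and $i^*<\lambda$ with $g(\alpha)=i^*$ for all $\alpha\in\acc(D)\setminus\gamma$. (More carefully: the range of $g$ is a subset of $\lambda$; if it were unbounded in $\lambda$ one could, using monotonicity, extract an increasing $\lambda$-sequence of values attained cofinally, but monotonicity forces each value to be attained on an initial-segment-closed-from-below chunk, and the supremum of the first $\lambda$-many would already be a limit point argument giving a contradiction with the strictness needed; alternatively, pick any $\delta\in\acc(D)$ above the first $\lambda$-many elements of $\acc(D)$ and observe $g(\delta)$ bounds $g$ on all smaller points of $\acc(D)$, so $g$ is bounded, say by $i^*$, and then by monotonicity $g$ is eventually equal to some value $\leq i^*$; shrinking once more makes it exactly constant, equal to $i^*$.)

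Now let $D^*=D\setminus(\gamma+1)$, which is a club subset of $\kappa$. For every $\alpha\in\acc(D^*)$ we have $\alpha\in\acc(D)$ and $\alpha>\gamma$, hence $g(\alpha)=i^*$, so $C_{\alpha,i^*}=D\cap\alpha$. Finally I would check that $D\cap\alpha=D^*\cap\alpha$ for $\alpha\in\acc(D^*)$: indeed $D^*\cap\alpha=(D\setminus(\gamma+1))\cap\alpha=D\cap\alpha$ because $\alpha>\gamma$ implies every element of $D$ below $\alpha$ that we could lose lies below $\gamma+1\leq\alpha$, and those are removed, but they are not in $\acc$-relevant position---more precisely, $C_{\alpha,i^*}=D\cap\alpha$ is already a club in $\alpha$ and $C_{\alpha,i^*}\subseteq D^*\cap\alpha$ since all its elements exceed $\gamma$ (being limit points of a club in $\alpha>\gamma$ that equals $D\cap\alpha$... one should just note $C_{\alpha,i^*}$ has no element $\leq\gamma$ because $\alpha$ itself, hence its club $C_{\alpha,i^*}$, is cofinally past $\gamma$ and $C_{\alpha,i^*}=C_{\beta,i^*}\cap\alpha$ coherently, but the cleanest route is: replace $D$ at the outset by $D\setminus(\gamma+1)$ and rerun, or observe directly $C_{\alpha,i^*}=D\cap\alpha$ and $D^*\cap\alpha\subseteq D\cap\alpha$ with the reverse inclusion following from $\min(D\cap\alpha\setminus C_{\alpha,i^*}\text{-issues})$). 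Thus $i^*\geq i(\alpha)$ and $C_{\alpha,i^*}=D^*\cap\alpha$ for all $\alpha\in\acc(D^*)$, contradicting~($6'$).

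The main obstacle I anticipate is establishing the monotonicity-forces-eventual-constancy step cleanly and making sure the coherence argument via condition~(4) applies: one must verify that $\alpha\in\acc(D)$ together with $\alpha<\beta\in\acc(D)$ indeed gives $\alpha\in\acc(C_{\beta,g(\beta)})=\acc(D\cap\beta)$, which holds because $D$ is club so $\acc(D)\cap\beta\subseteq\acc(D\cap\beta)$. After that, the reduction of $D$ to $D^*$ and the bookkeeping about initial segments is routine but should be written carefully to avoid off-by-one errors with $\acc$ versus $\nacc$ points; the safest presentation is to shrink $D$ to its tail past $\gamma$ immediately and verify the three conditions on that tail directly.
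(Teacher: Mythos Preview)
The paper does not actually prove this proposition; it simply refers the reader to \cite[Section 6]{systems}. So there is no in-paper proof to compare against, and I can only evaluate your argument on its own merits.

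Your overall strategy is correct and standard: define $g(\alpha)$ as the least witnessing index, establish monotonicity of $g$ along $\acc(D)$ via condition~(4), and use $\lambda<\kappa$ with $\kappa$ regular to conclude that $g$ is eventually constant, say with value $i^*$. The monotonicity step is clean and correct: if $\alpha<\beta$ lie in $\acc(D)$ then $\alpha\in\acc(D\cap\beta)=\acc(C_{\beta,g(\beta)})$, so condition~(4) yields $C_{\alpha,g(\beta)}=D\cap\alpha$ and hence $g(\alpha)\leq g(\beta)$.

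The stumble comes at the very end. Your attempt to pass to the tail $D^*=D\setminus(\gamma+1)$ and then argue that $D\cap\alpha=D^*\cap\alpha$ for $\alpha\in\acc(D^*)$ is simply false: these sets differ by $D\cap(\gamma+1)$, which is typically nonempty, and the subsequent hand-wringing does not repair this. The clean fix is to realize that no tail is needed at all. You have already observed that for $\alpha<\beta$ in $\acc(D)$ one has $C_{\alpha,g(\beta)}=D\cap\alpha$. Now given \emph{any} $\alpha\in\acc(D)$, choose $\beta\in\acc(D)$ with $\beta>\alpha$ and $g(\beta)=i^*$ (possible since $g$ is eventually $i^*$); then $C_{\alpha,i^*}=C_{\alpha,g(\beta)}=D\cap\alpha$, and condition~(4) also guarantees $i^*\geq i(\alpha)$. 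Thus the \emph{original} club $D$ together with the single index $i^*$ already witnesses the failure of~($6'$), and the argument is complete without ever modifying $D$.

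Your parenthetical justification of eventual constancy is also murkier than necessary. The cleanest phrasing: a nondecreasing function $g\colon\acc(D)\to\lambda$ partitions $\acc(D)$ into at most $\lambda$ many (possibly empty) convex pieces $g^{-1}\{i\}$, and since $\kappa>\lambda$ is regular and $\acc(D)$ is unbounded in $\kappa$, one such piece must be a final segment of $\acc(D)$.
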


It is immediate that the principle $\square^{\mathrm{ind}}(\kappa, \lambda)$ implies $\square(\kappa, \lambda)$. We next show that 
$\square(\kappa)$ implies all relevant instances of $\square^{\mathrm{ind}}(\kappa, \lambda)$.

\begin{theorem} \label{square_building_thm}
  Let $\lambda < \kappa$ be infinite regular cardinals and assume that $\square(\kappa)$ holds. 
  Given a stationary subset $S$ of $\kappa$, there is a $\square^{\mathrm{ind}}(\kappa, \lambda)$-sequence 
  $$\vec{\calC} ~ = ~ \seq{C_{\alpha,i}}{\alpha\in\acc(\kappa), ~ i(\alpha)\leq i<\lambda}$$ 
  with the following properties: 
  \begin{enumerate}
   \item If $i<\lambda$, then the set $\Set{\alpha\in S}{i(\alpha)=i}$ is stationary in $\kappa$. 
   
   \item There is a $\square(\kappa)$-sequence $\seq{D_\alpha}{\alpha<\kappa}$ such that $\acc(D_\alpha)\subseteq\acc(C_{\alpha,i(\alpha)})$ holds for all $\alpha\in\acc(\kappa)$. 
  \end{enumerate} 
\end{theorem}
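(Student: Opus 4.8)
The plan is to start from a $\square(\kappa)$-sequence $\vec{D} = \seq{D_\alpha}{\alpha < \kappa}$ and, level by level along $\acc(\kappa)$, build the matrix $\vec{\calC}$ by using Todor\v{c}evi\'{c}-style walks. Specifically, I would fix a $C$-sequence refining $\vec D$ and, for each pair $\alpha < \beta$ in $\acc(\kappa)$, consider the walk from $\beta$ down to $\alpha$ along $\vec D$; the ``number of steps'' or the maximal weight encountered in such walks is the standard device for stratifying coherent sequences into $\lambda$-many coherent layers. The index $i(\alpha)$ should be read off from this walk data (roughly, $i(\alpha)$ records how ``deep'' $\alpha$ sits relative to a fixed reference club, measured by some $\rho$-like function taking values in $\lambda$), and $C_{\alpha,i}$ for $i \geq i(\alpha)$ should be defined as the closure of the set of ordinals reachable from $\alpha$ by walks of bounded complexity. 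The monotonicity (3), the coherence (4), and the ``every smaller $\alpha$ eventually appears'' condition (5) should then fall out of the basic algebra of walks (subadditivity of $\rho$-functions), exactly as in the proof that $\square(\kappa)$ implies $\square(\kappa,\lambda)$.

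The two extra bullet points are where the real work lies. For (2), I want the bottom layer $C_{\alpha,i(\alpha)}$ to essentially reconstruct $D_\alpha$: the natural move is to arrange that $\acc(D_\alpha) \subseteq \acc(C_{\alpha,i(\alpha)})$ by making the walk-based club at the minimal relevant index always contain the tail structure of $D_\alpha$. Concretely, I would define $C_{\alpha, i(\alpha)}$ to include $D_\alpha$ (or its trace) as a subset, checking that this does not disturb coherence — since $\vec D$ is itself coherent, adding $D_\alpha \cap \alpha'$ at accumulation points $\alpha'$ is consistent with the recursive definition. Then the non-threadability clause (6) for $\vec\calC$ should be inherited: a thread through $\vec\calC$ at some fixed index, via Proposition \ref{threadprop} (condition ($6'$)), would restrict on a club to a thread through $\vec D$ at the bottom level, contradicting that $\vec D$ is a $\square(\kappa)$-sequence. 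This is where I would lean on Proposition \ref{threadprop}, since controlling threads at a single fixed index is exactly what makes the reduction to $\vec D$ go through.

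For (1), the stationarity requirement, the idea is to deliberately ``spread'' the index function $i(\alpha)$ across $S$. Partition a stationary subset of $S$ into $\lambda$-many stationary pieces $\seq{S_i}{i<\lambda}$ (possible since $\kappa$ is regular and $\lambda < \kappa$), and then define $\vec\calC$ so that on $S_i$ the index is forced to be exactly $i$ — for instance by composing the walk construction with a fixed bijection or by inflating the starting index of the walk on $S_i$. The delicate point is that prescribing $i(\alpha) = i$ on $\alpha \in S_i$ must remain compatible with coherence clause (4): if $\alpha \in \acc(C_{\beta,j})$ then we need $j \geq i(\alpha)$, so the prescribed indices on $S$ cannot be ``too large'' relative to what walks naturally produce. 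I expect this to be handled by choosing the $S_i$ inside a sufficiently thin stationary set — e.g., inside $E^\kappa_{>\lambda}$ or by a further intersection argument — so that points of $S_i$ never appear as accumulation points of clubs at indices below $i$, which can be guaranteed by a bookkeeping/reflection argument during the recursion.

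The main obstacle, I believe, is simultaneously meeting (1) and maintaining clauses (3)–(5) of Definition \ref{ind_square_def}: the walk machinery produces indices organically, but forcing them to take prescribed values on a stationary set without breaking subadditivity-driven coherence is the crux. I would attack this by first carrying out the generic walk-based construction of a $\square^{\mathrm{ind}}(\kappa,\lambda)$-sequence from $\square(\kappa)$ (this is essentially in the literature), then modifying it: reserve the stationary pieces $S_i$ in advance, and at each stage of the recursion, when defining $C_{\alpha,\cdot}$ for $\alpha \in S_i$, shift the index offset so that $i(\alpha) = i$ while taking $C_{\alpha,i}$ to be the walk-club that the unmodified construction would have placed at its minimal index (re-indexed), and define $C_{\alpha,j}$ for $i(\alpha) \le j < i$ vacuously-free (they simply do not exist, since the matrix only requires entries for $i \ge i(\alpha)$). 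Verifying that coherence survives this re-indexing — particularly clause (4), which ties indices of a club at $\beta$ to indices at its accumulation points $\alpha$ — will require checking that accumulation points of the re-indexed clubs retain consistent index bounds, which is the one computation I would actually grind through carefully.
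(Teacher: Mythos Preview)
Your proposal is a sketch rather than a proof, and it departs substantially from the paper's route. The paper does not use walks at all; it first invokes a result of Rinot to obtain a $\square(\kappa)$-sequence $\vec{D}=\seq{D_\alpha}{\alpha<\kappa}$ such that $\{\alpha\in S:\min(D_\alpha)=\eta\}$ is stationary for every $\eta<\kappa$, then fixes a partition $\seq{A_i}{i<\lambda}$ of $\kappa$ and sets $i(\alpha)$ to be the unique $i$ with $\min(D_\alpha)\in A_i$. The point is that this choice of index is \emph{automatically} coherent along $\vec{D}$: whenever $\beta\in\acc(D_\alpha)$ one has $D_\beta=D_\alpha\cap\beta$, hence $\min(D_\beta)=\min(D_\alpha)$ and $i(\beta)=i(\alpha)$. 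With the index function thus fixed in advance, the matrix $\vec{\calC}$ is built by a direct case-by-case recursion on $\alpha$, always maintaining $\acc(D_\alpha)\subseteq\acc(C_{\alpha,i(\alpha)})$.

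The genuine gap in your plan is exactly the one you flag: imposing $i(\alpha)=i$ on an externally chosen stationary piece $S_i$ while preserving clause~(4) of Definition~\ref{ind_square_def}. If in the unmodified construction some $\beta>\alpha$ has $\alpha\in\acc(C_{\beta,j})$ with $j<i$, then raising $i(\alpha)$ to $i$ violates (4) unless you simultaneously excise $\alpha$ from $\acc(C_{\beta,j})$ for \emph{every} such $\beta$ --- a global change, not a local re-indexing. Taking the $S_i$ ``thin enough'' does not help: by clauses (3) and (5), every $\alpha\in\acc(\kappa)$ must lie in $\acc(C_{\beta,j})$ for all $\beta>\alpha$ and all sufficiently large $j<\lambda$, and you have no a priori control over how small that threshold is. The paper avoids this entirely by letting $i(\alpha)$ be read off from the coherent data $\vec{D}$ rather than prescribed. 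Your non-threadability argument is also off as stated: a club $D$ with $D\cap\alpha=C_{\alpha,i}$ on $\acc(D)$ need not restrict to a thread through $\vec{D}$, since you only arrange $\acc(D_\alpha)\subseteq\acc(C_{\alpha,i(\alpha)})$ and not the reverse; the paper instead derives ($6'$) directly from (1), observing that a thread at index $i$ would force $i(\alpha)\le i$ on a club, contradicting stationarity of $\{\alpha\in S:i(\alpha)=i+1\}$.
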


\begin{proof}
  By a result of Rinot (see {\cite[Lemma 3.2]{MR3271280}}) and our assumptions, there is a $\square(\kappa)$-sequence 
  $\vec{D} = \seq{D_\alpha}{\alpha < \kappa}$ with the property that, for all $\eta < \kappa$, 
  the set $S^\eta = \Set{\alpha\in S}{\min(D_\alpha) = \eta}$ is stationary. 
  Fix a partition $\seq{A_i }{ i < \lambda}$ of $\kappa$ into disjoint, non-empty sets. 
  Given $\alpha \in \acc(\kappa)$, define $i(\alpha)$ to be the unique $i<\lambda$ with $\min(D_\alpha)\in A_i$.  
  Then the set $\Set{\alpha\in S}{i(\alpha)=i}$ is stationary in $\kappa$ for 
  every $i<\kappa$ and, if $\alpha\in\acc(\kappa)$ and $\beta\in\acc(D_\alpha)$, then $i(\alpha)=i(\beta)$
  

 By induction on $\alpha\in\acc(\kappa)$, we define a matrix $\seq{C_{\alpha,i}}{\alpha\in\acc(\kappa), ~ i(\alpha)<i<\lambda}$ 
 satisfying clauses (1)--(5) listed in Definition \ref{ind_square_def} together with the assumption that 
 $\acc(D_\alpha) \subseteq \acc(C_{\alpha, i(\alpha)})$ holds for every $\alpha \in \acc(\kappa)$. 
 In the following, fix a limit ordinal $\alpha<\kappa$ and assume that we already have constructed a matrix with the above properties up to $\alpha$. 
There are a number of cases to consider: 

\smallskip

 \paragraph{\textbf{Case 1:} $\alpha = \omega$.} 
 Define $C_{\omega, i} = \omega$ for all $i(\omega)\leq i <\lambda$.  Then all of the desired requirements are trivially satisfied.

 \smallskip

 \paragraph{\textbf{Case 2a:} $\alpha = \beta + \omega$ for a limit ordinal $\beta$, and $\acc(D_\alpha)=\emptyset$.} 
 Set $j=\max\{i(\alpha),i(\beta)\}$, $C_{\alpha,i}=\Set{\beta+n}{n<\omega}$ for all 
 $i(\alpha)\leq i<j$ and $C_{\alpha,i}=C_{\beta,i}\cup\Set{\beta+n}{n<\omega}$ for all $j\leq i<\lambda$. Then  
 it is easy to see that clauses (1)--(3) and (5) of Definition \ref{ind_square_def} hold. 
 To verify clause (4), suppose $i(\alpha)\leq i<\lambda$ and $\gamma\in\acc(C_{\alpha,i})$. 
 By our construction, it follows that $i\geq j\geq i(\beta)$ and $\gamma\in\acc(C_{\beta,i})\cup\{\beta\}$.
 By the induction hypothesis applied to $\beta$, it follows that $i \geq i(\gamma)$ and
 $C_{\gamma, i} = C_{\beta, i} \cap \gamma = C_{\alpha,i} \cap \gamma$.
 
 \smallskip
 
 \paragraph{\textbf{Case 2b:} $\alpha = \beta + \omega$ for a limit ordinal $\beta$, and $\acc(D_\alpha)\neq\emptyset$.} 
  In the following, we set $\alpha_0=\max(\acc(D_\alpha))\leq\beta$. Then the above remarks show that $i(\alpha_0)=i(\alpha)$. 
  If $\alpha_0<\beta$, then we let $j$ be minimal such that $i(\alpha)\leq j<\lambda$ and $\alpha_0\in\acc(C_{\beta,j})$. Otherwise, we set $j=i(\alpha)$. 
  Then $j\geq i(\beta)$, because either $\alpha_0=\beta$, $\beta\in\acc(D_\alpha)$ and $i(\beta)=i(\alpha_0)=j$ or $\alpha_0<\beta$, $\alpha_0\in\acc(C_{\beta,j})$ and $j\geq i(\beta)$. Define $$C_{\alpha,i} ~ = ~ C_{\alpha_0, i} \cup \{\alpha_0\} \cup \Set{\beta + n}{n < \omega}$$ for all $i(\alpha)\leq i<j$ and $C_{\alpha, i} = C_{\beta, i} \cup \Set{\beta + n }{n < \omega}$ for all $j\leq i<\lambda$. 

Since our induction hypothesis ensures that $C_{\alpha_0,i}\cup\{\alpha_0\}\subseteq C_{\alpha_0,j}\cup\{\alpha_0\}\subseteq C_{\beta,j}$
holds for all $i(\alpha)\leq i<j$, it is easy to see that Clauses (1)--(3) and (5) from Definition \ref{ind_square_def} hold in this case. 
We thus verify clause (4). Suppose $i(\alpha) \leq i < \lambda$ and $\gamma \in \acc(C_{\alpha, i})$. 
If $i(\alpha)\leq i<j$, then either $\gamma=\alpha_0$ or $\gamma\in\acc(C_{\alpha_0,i})$.  
In both instances, we have $i(\gamma)\leq i$ and $C_{\gamma,i}=C_{\alpha_0,i}\cap\gamma=C_{\alpha,i}\cap\gamma$. 
On the other hand, if $j\leq i < \lambda$, then $\gamma\in\acc(C_{\beta,i})\cup\{\beta\}$, so 
$i \geq i(\gamma) $ and $C_{\gamma,i}=C_{\beta,i}\cap\gamma = C_{\alpha, i} \cap \gamma$.  
Finally, the above definitions ensure that $$\acc(D_\alpha) ~ = ~ \acc(D_{\alpha_0})\cup\{\alpha_0\} ~ \subseteq ~ C_{\alpha_0,i(\alpha_0)}\cup\{\alpha_0\} ~  \subseteq ~  C_{\alpha,i(\alpha)}.$$ 

\smallskip

\paragraph{\textbf{Case 3a:} $\alpha$ is a limit of limit ordinals and $\acc(D_\alpha)=\emptyset$.} 
Pick a strictly increasing sequence $\seq{\alpha_n}{n < \omega}$ of limit ordinals cofinal in $\alpha$. 
Given $n < \omega$, let $i(\alpha) \leq j_n < \lambda$ be minimal with $i(\alpha_m) \leq j_n$ 
and $\alpha_m \in \acc(C_{\alpha_{n+1}, j_n})$ for all $m \leq n$. Then our induction hypothesis implies that 
$j_n\leq j_{n+1}$ for all $n<\omega$. Set $j_\omega=\sup_{n<\omega}j_n\leq\lambda$.  
If $i(\alpha)\leq i< j_0$, then we define $C_{\alpha,i}=\Set{\alpha_n}{n<\omega}$. 
Next, if $j_n\leq i< j_{n+1}$ for some $n<\omega$, then $i\geq i(\alpha_n)$, and we define $C_{\alpha,i}=C_{\alpha_n,i}\cup\Set{\alpha_m}{n\leq m<\omega}$. 
Finally, if $j_\omega\leq i<\lambda$, then $i\geq i(\alpha_n)$ for all $n<\omega$, and we define $C_{\alpha, i} = \bigcup\Set{C_{\alpha_n, i}}{n < \omega}$. 

 The above definitions and our induction hypothesis directly ensure that clauses (1) and (5) from Definition \ref{ind_square_def} hold. 
 If $m<n<\omega$ and $i\geq j_{n-1}$, then the above definitions ensure that $\alpha_m\in\acc(C_{\alpha_n,i})\subseteq C_{\alpha_n,i}$ holds. In particular, we have $\Set{\alpha_n}{n<\omega}\subseteq C_{\alpha,i}$ for all $i(\alpha)\leq i<\lambda$. Moreover, we have $C_{\alpha_m,i}=C_{\alpha_n,i}\cap\alpha_m$ for all $j_\omega\leq i<\lambda$ and $m<n<\omega$. This shows that clause (2) from Definition \ref{ind_square_def} holds. 
 Now, fix $i(\alpha)\leq i<j<\lambda$. If either $i$ or $j$ is contained in $[i(\alpha),j_0)\cup[j_\omega,\lambda)$, then the above remark and our induction hypothesis imply that $C_{\alpha,i}\subseteq C_{\alpha,j}$ holds. Next, if there is an $n<\omega$ with $j_n\leq i<j<j_{n+1}$, then our induction hypothesis implies that $C_{\alpha_n,i}\subseteq C_{\alpha_n,j}$ and therefore $C_{\alpha,i}\subseteq C_{\alpha,j}$ also holds in this case. Finally, if there are $m<n<\omega$ with $j_m\leq i<j_{m+1}\leq j_n\leq j<j_{n+1}$, then $\alpha_m\in\acc(C_{\alpha_n},j)$, $C_{\alpha_m,i}\subseteq C_{\alpha_m,j}=C_{\alpha_n,j}\cap\alpha_m$ and, in combination with the above remarks, this implies that $C_{\alpha,i}\subseteq C_{\alpha,j}$ holds. These computations show that clause (3) of Definition \ref{ind_square_def} holds in this case. 
We finally verify clause (4) of Definition \ref{ind_square_def}. To this end, fix $i(\alpha) \leq i < \lambda$ and 
$\gamma \in \acc(C_{\alpha, i})$. Then $i\geq j_0$. If there is an $n<\omega$ with $j_n\leq i<j_{n+1}$, then it follows that $\gamma \in \acc(C_{\alpha_n,i}) \cup \{\alpha_n\}$,
in which case the induction hypothesis implies that $i\geq i(\gamma)$ and $C_{\gamma, i} = C_{\alpha_n,i} \cap \gamma = C_{\alpha,i} \cap \gamma$. In the other case, assume that $j_\omega \leq i < \lambda$ and let $n < \omega$ be least such that $\gamma < \alpha_n$. By the above remarks and our induction hypothesis, we then have $C_{\alpha,i}\cap\gamma=C_{\alpha_n,i}\cap\gamma$, $\gamma\in\acc(C_{\alpha_n,i})$, $i\geq i(\gamma)$ and $C_{\gamma,i}=C_{\alpha_n,i}\cap\gamma=C_{\alpha,i}\cap\gamma$. 

\smallskip

\paragraph{\textbf{Case 3b:} $\alpha$ is a limit of limit ordinals and $\acc(D_\alpha)\neq\emptyset$ is bounded below $\alpha$.} 
  Pick a strictly increasing sequence $\seq{\alpha_n}{ n < \omega}$ of limit ordinals cofinal in $\alpha$ with $\alpha_0 = \max(\acc(D_\alpha))$. Then $\alpha_0\in\acc(D_\alpha)$ implies that $i(\alpha_0)=i(\alpha)$. Define a sequence $\seq{j_n}{n\leq\omega}$ as in Case 3a. 
  If $i(\alpha)\leq i< j_0$, then $i\geq i(\alpha_0)$, and we define $C_{\alpha,i}=C_{\alpha_0,i}\cup\Set{\alpha_n}{n<\omega}$.   
Next, if $j_n\leq i< j_{n+1}$ for some $n<\omega$, then $i\geq i(\alpha_n)$, and we define $C_{\alpha,i}=C_{\alpha_n,i}\cup\Set{\alpha_m}{n\leq m<\omega}$. 
Finally, if $j_\omega\leq i<\lambda$, then $i\geq i(\alpha_n)$ for all $n<\omega$, and we define $C_{\alpha, i} = \bigcup\Set{C_{\alpha_n, i}}{n < \omega}$.

 As above, it is easy to see that clauses (1) and (5) from Definition \ref{ind_square_def} hold. Moreover, we again have $\alpha_m\in\acc(C_{\alpha_n,i})\subseteq C_{\alpha_n,i}$ for all $m<n<\omega$ and $i\geq j_{n-1}$ and, together with our induction hypothesis, this implies that $$C_{\alpha_0,i} ~ \cup ~ \Set{\alpha_n}{n<\omega} ~ \subseteq ~ C_{\alpha,i}$$ for all $i(\alpha)\leq i<\lambda$ and $C_{\alpha_m,i}=C_{\alpha_n,i}\cap\alpha_m$ for all $j_\omega\leq i<\lambda$ and $m<n<\omega$. Hence clause (2) from Definition \ref{ind_square_def} holds. As in Case 3a, clauses (3) and (4) from Definition \ref{ind_square_def} are a direct consequence of our induction hypothesis and the above observations. Finally,  our construction and the induction hypothesis ensure that 
  $$\acc(D_\alpha)  ~ =  ~ \acc(D_{\alpha_0})  \cup  \{\alpha_0\}  ~ \subseteq  ~ \acc(C_{\alpha_0, i(\alpha_0)})   \cup   \{\alpha_0\}  ~ \subseteq  ~ \acc(C_{\alpha, i(\alpha)}).$$ 

\smallskip

\paragraph{\textbf{Case 4:} $\acc(D_\alpha)$ is unbounded in $\alpha$.} 
  Note that, for all $\beta,\gamma\in\acc(D_\alpha)$ with $\beta<\gamma$, our construction and the induction 
  hypothesis imply that $i(\alpha) = i(\beta) = i(\gamma)$, $\gamma \in \acc(C_{\beta, i(\alpha)})$, and therefore $C_{\gamma, i} = C_{\beta, i} \cap \gamma$ for all $i(\alpha) \leq i < \lambda$. If we now define $C_{\alpha, i} = \bigcup\Set{C_{\beta, i}}{\beta \in \acc(D_\alpha)}$ for all $i(\alpha) \leq i < \lambda$, then it is easy to see that clauses (1)--(3) and (5) of Definition \ref{ind_square_def} hold. To verify clause (4), fix 
  $i(\alpha) \leq i < \lambda$ and $\gamma \in \acc(C_{\alpha, i})$. Let $\beta = \min(\acc(D_\alpha) \setminus (\gamma + 1))$. 
  It follows that $\gamma \in \acc(C_{\beta, i})$, so, by the induction hypothesis, we have 
  $i\geq i(\gamma)$ and $C_{\gamma, i} = C_{\beta, i} \cap \gamma = C_{\alpha, i} \cap \gamma$. Finally, our induction hypothesis implies that 
  \begin{equation*}
   \begin{split}
    \acc(D_\alpha) ~ & = ~ \bigcup\Set{\acc(D_\beta)}{\beta\in\acc(D_\alpha)} \\
                                & \subseteq ~ \bigcup\Set{\acc(C_{\beta,i(\beta)})}{\beta\in\acc(D_\alpha)} ~ \subseteq ~ \acc(D_{\alpha,i(\alpha)}).
  \end{split}
 \end{equation*}

 \smallskip

  We have thus constructed a matrix $\vec{C}$ satisfying clauses
  (1)--(5) of Definition \ref{ind_square_def}. We finish the proof by verifying condition (6') from 
  Proposition \ref{threadprop}. To this end, suppose for the sake of a contradiction that there is 
  $i < \lambda$ and a club $E$ in $\kappa$ such that  $i\geq i(\alpha)$ holds  for all $\alpha \in \acc(E)$. Let $T = \Set{\alpha \in S}{i(\alpha) = i+1}$. Since $T$ is stationary in $\kappa$, there is $\alpha \in \acc(E) \cap T$.   But then $i\geq i(\alpha)=i+1$, a contradiction.  
\end{proof}

In the proof of the following result, we use Todor\v{c}evi\'{c}'s method of \emph{walks on ordinals} to construct trees with ascent paths from suitable square principles.

\begin{theorem} \label{indexed_square_thm}
  If $\lambda<\kappa$ are infinite regular cardinals and $\square^{\mathrm{ind}}(\kappa,\lambda)$ holds, then there is a $\kappa$-Aronszajn tree with a $\lambda$-ascent path. 
\end{theorem}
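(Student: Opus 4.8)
The plan is to use walks on ordinals along the $\square^{\mathrm{ind}}(\kappa,\lambda)$-matrix $\vec{\calC}=\seq{C_{\alpha,i}}{\alpha<\kappa,\,i(\alpha)\le i<\lambda}$. Fix such a matrix. For $\beta<\kappa$ and $i(\beta)\le i<\lambda$, let $\seq{C_{\alpha,i}}{\alpha}$ behave, for ordinals hit along the walk, like a $C$-sequence; more precisely, for each fixed $i<\lambda$ I would define a walk from $\beta$ down to $\alpha\le\beta$ using at each step $\gamma$ the club $C_{\gamma,i}$ whenever $i\ge i(\gamma)$ (and, say, $C_{\gamma,i(\gamma)}$ otherwise, though coherence makes the careful bookkeeping of which index is ``active'' the crux). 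This yields, for each $i$, a trace function and a characteristic/upper-trace function $\rho_i\colon[\kappa]^2\to\mathrm{On}$ (or the full-code function $\rho$ as in Todor\v{c}evi\'c's walks). The tree $\TTT$ is then the standard tree of these characteristics: $\TTT=\Set{\rho_{i(\beta)}(\cdot,\beta)\restriction\alpha}{\alpha\le\beta<\kappa}$ ordered by end-extension, restricted to a single fixed index per node so that levels stay small. One then shows (a) $\TTT$ is a $\kappa$-tree: levels have size ${<}\kappa$ because each level-$\alpha$ node is determined by finitely much data plus an ordinal ${<}\alpha$ and an index ${<}\lambda$, using $\lambda,\alpha<\kappa$ and $\kappa$ regular; (b) $\TTT$ has no cofinal branch: a cofinal branch would, via the standard walks argument, thread the matrix, i.e.\ produce a club $D$ and an index $i$ with $C_{\alpha,i}=D\cap\alpha$ for cofinally (hence club-many) $\alpha$, contradicting clause (6) (or (6$'$) via Proposition \ref{threadprop}).

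The new ingredient over the classical $\square(\kappa)$-from-walks construction is the $\lambda$-ascent path. The idea is that the family of characteristics $\seq{\rho_i(\cdot,\beta)}{i(\beta)\le i<\lambda}$, read at level $\alpha$, should serve as $b_\beta$: set $b_\beta(i)$ to be the appropriate node of $\TTT(\alpha)$ coming from index $i$ (padding below $i(\beta)$ arbitrarily, e.g.\ repeating the $i(\beta)$-value). To verify the ascent-path property, given $\alpha<\beta$ in $\acc(\kappa)$ I would invoke clause (5) of Definition \ref{ind_square_def}: there is $i^*\ge i(\beta)$ with $\alpha\in\acc(C_{\beta,i^*})$, and then clause (4) gives $i^*\ge i(\alpha)$ and $C_{\alpha,i^*}=C_{\beta,i^*}\cap\alpha$ — so from index $i^*$ upward the walks from $\alpha$ and from $\beta$ agree below $\alpha$ (clause (3) propagates coherence upward from $i^*$ to all $j\in[i^*,\lambda)$), forcing $b_\alpha(j)<_\TTT b_\beta(j)$ for all $i^*\le j<\lambda$. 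Handling successor levels and the non-accumulation points requires the usual thinning/standard-level trick (pass to $\TTT\restriction\acc(\kappa)$, or to a club, and re-index levels), which is routine.

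The main obstacle I anticipate is the simultaneous control of coherence across the $\lambda$-many indices while keeping the tree levels of size ${<}\kappa$: the walk at index $i$ is only ``legitimate'' at ordinals $\gamma$ with $i\ge i(\gamma)$, and clauses (3)--(5) must be combined so that (i) for each pair $\alpha<\beta$ there is a uniform tail of indices on which the two walks cohere below $\alpha$, and (ii) a single node at level $\alpha$ is coded by bounded-below-$\alpha$ information. I would resolve (i) exactly via clauses (5)+(4)+(3) as above, and (ii) by noting that along a walk each passage through an ordinal $\gamma$ contributes only the pair $(\gamma, \text{step count or last-index})$, and the step lengths are finite, so the level-$\alpha$ node is coded by an element of ${}^{<\omega}(\alpha\times\lambda)$, a set of size $<\kappa$. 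Once these are in place, the no-branch argument is the verbatim threading argument, now using condition (6$'$) from Proposition \ref{threadprop} to absorb the choice of index. I would also double-check that the construction gives a genuine $\lambda$-\emph{ascent} path (the ``for all $j\ge i^*$'' version) and not merely a $\lambda$-ascending path — this is exactly what clause (3) buys us, since coherence at $i^*$ implies coherence at every larger index.
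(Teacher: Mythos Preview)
Your overall strategy---walks along the $C$-sequences extracted from the matrix, using clauses (3)--(5) of Definition \ref{ind_square_def} to obtain the ascent-path tail and clause (6$'$) (via Proposition \ref{threadprop}) to rule out branches---is exactly the paper's approach, and your analysis of why clauses (3)+(4)+(5) yield a genuine \emph{ascent} path (not just an ascending path) is correct.

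The one place where your plan has a real gap is the tree construction. You propose $\TTT=\Set{\rho_{i(\beta)}(\cdot,\beta)\restriction\alpha}{\alpha\le\beta<\kappa}$, a single tree using the canonical index $i(\beta)$ at each top ordinal, and then want the ascent path to use the nodes ``coming from index $i$'', i.e.\ $\rho_i(\cdot,\beta)\restriction\alpha$ for varying $i$. But those functions for $i\neq i(\beta)$ need not lie in your $\TTT$ at all, and even when two such functions happen to coincide with nodes of $\TTT$, the end-extension order gives you no control over how $\rho_i$-nodes and $\rho_j$-nodes for $i\neq j$ interact; your coherence argument only shows $\rho_j(\cdot,\alpha)\subset\rho_j(\cdot,\beta)$ for a tail of $j$, which says nothing about comparability inside a tree built from the $\rho_{i(\cdot)}$'s. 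The paper resolves this cleanly by \emph{not} amalgamating: for each $i<\lambda$ it forms the $C$-sequence $\vec{C}_i$ with $C^i_\alpha=C_{\alpha,\max(i,i(\alpha))}$ (your padding idea), builds the walk-tree $\TTT_i=\TTT(\rho_0^{\vec{C}_i})$, shows each $\TTT_i$ is $\kappa$-Aronszajn, and then takes $\TTT$ to be the \emph{disjoint union} of the $\TTT_i$ (restricted to limit levels), with nodes tagged as pairs $\langle i,t\rangle$. Since $\lambda<\kappa$ the result is still a $\kappa$-tree, and a cofinal branch lies entirely in one $\TTT_i$, where the standard threading argument applies. The ascent path is $c_\alpha(i)=\langle i,\rho_0^{\vec{C}_i}(\cdot,\omega\alpha)\rangle$; now your clause-(3)--(5) argument gives $c_\alpha(j)<_\TTT c_\beta(j)$ for a tail of $j$ because comparability is checked \emph{within} $\TTT_j$. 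With this one adjustment the rest of your plan goes through verbatim.
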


\begin{proof}
 Fix a $\square^{\mathrm{ind}}(\kappa,\lambda)$-sequence $\vec{C}=\seq{C_{\alpha,i}}{\alpha<\kappa, ~ i(\alpha)\leq i<\lambda}$. 
 Given $i<\lambda$, let $\vec{C}_i=\seq{C^i_\alpha}{\alpha<\kappa}$ denote the unique $C$-sequence (see {\cite[Section 1]{MR908147}}) with $C^i_\alpha = C_{\alpha, i(\alpha)}$ for all $\alpha \in \acc(\kappa)$ with $i < i(\alpha)$ and $C^i_\alpha = C_{\alpha, i}$ for all $\alpha \in \acc(\kappa)$ with $i(\alpha) \leq i$.  
 For each $i < \lambda$, recursively define $\map{\rho_0^{\vec{C}_i}}{[\kappa]^2}{{^{{<}\omega} \kappa}}$ as in \cite{MR908147} 
 by letting, for all $\alpha < \beta < \kappa$, 
 \[
  \rho_0^{\vec{C}_i}(\alpha, \beta) = \langle \otp(C^i_\beta \cap \alpha) \rangle ^\frown \rho_0^{\vec{C}_i}(\alpha, \min(C^i_\beta 
  \setminus \alpha)),
 \]
 subject to the boundary condition $\rho_0^{\vec{C}_i}(\alpha, \alpha) = \emptyset$ for all $\alpha < \kappa$.
 Given $i<\lambda$, we set  $$\TTT_i ~ = ~ \TTT(\rho_0^{\vec{C}_i})  ~ = ~  (\Set{\rho_0^{\vec{C}_i}(~ \cdot ~, \beta) \restriction \alpha}{\alpha \leq \beta < \kappa}, \subset).$$

 \begin{claim}\label{branch_claim}
  If $i<\lambda$, then the tree $\TTT_i$ has no cofinal branches. 
 \end{claim}

 \begin{proof}[Proof of the Claim]
   Suppose $\TTT_i$ has a cofinal branch. By \cite[(1.7)]{MR908147}, there is a club $D$ in $\kappa$ and a $\xi < \kappa$ with the property that, for all $\alpha < \kappa$, there is $\beta(\alpha) \geq \alpha$  with  $$D \cap \alpha ~ = ~  C^i_{\beta(\alpha)} \cap [\xi, \alpha).$$   
   
Given $\alpha \in \acc(D)$, we have $\beta(\alpha)\in\acc(\kappa)$ and there is a $j(\alpha) < \lambda$ with the property that $C^i_{\beta(\alpha)} = C_{\beta(\alpha), j(\alpha)}$. Since $\vec{C}$ is a $\square^{\mathrm{ind}}(\kappa, \lambda)$-sequence and $\alpha\in\acc(C^i_{\beta(\alpha)})=\acc(C_{\beta(\alpha),j(\alpha)})$ for all $\alpha\in\acc(D)$, we can conclude that $$D \cap \alpha ~ = ~  C^i_{\beta(\alpha)} \cap [\xi, \alpha) ~ = ~ C_{\beta(\alpha),j(\alpha)} \cap [\xi, \alpha) ~ = ~ C_{\alpha,j(\alpha)}\cap [\xi, \alpha).$$  
Fix an unbounded subset $E$ of $\acc(D)$ and $j < \lambda$ with $j(\alpha)=j$ for all $\alpha \in E$. 
Given $\alpha,\beta\in E$ with $\xi<\alpha<\beta$, we have $\alpha \in \acc(C_{\beta, j})$ and therefore   $C_{\alpha, j} = C_{\beta, j} \cap \alpha$. If we define $C^* = \bigcup\Set{C_{\alpha, j}}{\xi<\alpha \in E}$, then $C^*$ is a club in $\kappa$ and, for all $\alpha \in \acc(C^*)$, we have $C_{\alpha, j} = C^* \cap \alpha$, contradicting the fact that $\vec{C}$ is a $\square^{\mathrm{ind}}(\kappa, \lambda)$-sequence.
 \end{proof}

 \begin{claim} \label{level_claim}
  If $i<\lambda$ and $\alpha<\kappa$, then the $\alpha$-th level of $\TTT_i$ has cardinality less than $\kappa$. 
 \end{claim}

 \begin{proof}[Proof of the Claim]
   Let $\mathcal{D}^i_\alpha = \Set{C^i_\beta \cap \alpha}{\alpha \leq \beta < \kappa}$.
   By \cite[(1.3)]{MR908147}, the $\alpha$-th level of $\TTT_i$ has cardinality at most 
   $\betrag{\mathcal{D}^i_\alpha} + \aleph_0$. If $D \in \mathcal{D}^i_\alpha$, then $D$ is the union 
   of a finite subset of $\alpha$ and a set of the form $C_{\gamma, j}$, where $\gamma \leq \alpha$ 
   and $j < \lambda$. There are only $\betrag{\alpha}$-many finite subsets of $\alpha$ and only $\max\{\lambda, \betrag{\alpha}\}$-many sets of the form $C_{\gamma, j}$, where $\gamma \leq \alpha$ and $j < \lambda$. In combination, this shows that $\betrag{\mathcal{D}^i_\alpha} \leq \max\{\lambda, \betrag{\alpha}\} < \kappa$.
 \end{proof}

 Now, define $\TTT$ to be the unique tree with the following properties: 
 \begin{enumerate}
  \item The underlying set of $\TTT$ is the collection of all pairs $\langle i,t\rangle$ such that $i<\lambda$, $t\in\TTT_i$ and $\height{t}{\TTT_i}$ is a limit ordinal. 
  
  \item Given nodes $\langle i,t\rangle$ and $\langle j,u\rangle$ in $\TTT$, we have $\langle i,t\rangle\leq_\TTT\langle j,u\rangle$ if and only if $i=j$ and $t\leq_{\TTT_i}u$.
 \end{enumerate}
 
 Then Claims \ref{branch_claim} and \ref{level_claim} directly imply that $\TTT$ is a $\kappa$-Aronszajn tree. 
 Given $\alpha<\kappa$, we define $$\Map{c_\alpha}{\lambda}{\TTT}{i}{\langle i, ~ \rho_0^{\vec{C}_i}( ~ \cdot ~ , ~ \omega\cdot\alpha)\rangle}.$$

 \begin{claim}
  The sequence $\seq{c_\alpha}{\alpha<\kappa}$ is a $\lambda$-ascent path in $\TTT$. 
 \end{claim}

 \begin{proof}[Proof of the Claim]
  Note that, for all $i < \lambda$ and all $\alpha < \beta < \kappa$, if $C^i_{\omega \cdot \alpha} = 
  C^i_{\omega \cdot \beta} \cap (\omega \cdot \alpha)$, then $$\rho^{\vec{C}_i}_0( ~ \cdot ~ , \omega\cdot\alpha) ~ 
  = ~ \rho^{\vec{C}_i}_0( ~ \cdot ~ , \omega\cdot\beta) \restriction \omega\cdot\alpha.$$ Let $j_{\alpha, \beta} < \lambda$ be least such that 
  $\omega \cdot \alpha \in \acc(C_{\omega \cdot \beta, j_{\alpha, \beta}})$. Then, for all $j_{\alpha, \beta} \leq i < \lambda$, 
  we have $C^i_{\omega \cdot \alpha} = C_{\omega \cdot \alpha, i}$, $C^i_{\omega\cdot\beta} = C_{\omega\cdot\beta, i}$, 
  and $C_{\omega\cdot\alpha, i} = C_{\omega\cdot\beta, i} \cap \omega\cdot\alpha$. It follows that, for all $\alpha < \beta < \kappa$ 
  and all $j_{\alpha, \beta} \leq i < \lambda$, we have $c_\alpha(i) <_\TTT c_\beta(i)$.
 \end{proof}

 This completes the proof of the theorem. 
\end{proof}

The statement of Theorem \ref{square_ascent_path_thm} now follows directly from Theorems \ref{square_building_thm} and \ref{indexed_square_thm}.


\section{Forcing preliminaries}\label{section:Forcing_preliminaries}

In this section, we review some forcing posets designed to add and thread square sequences. We also recall constructions 
to make weak compactness or the tree property indestructible under mild forcing.


\subsection{Forcing square sequences}

We first recall the notion of strategic closure.

\begin{definition}
  Let $\PPP$ be a partial order (with maximal element $\mathbbm{1}_\PPP$) and let $\beta$ be an ordinal. 
  \begin{enumerate}
    \item $\Game_\beta(\PPP)$ is the two-player game of perfect information in which Players I and II alternate playing conditions from       $\PPP$ to attempt to construct a $\leq_\PPP$-decreasing sequence $\seq{p_\alpha}{\alpha < \beta}$. 
      Player I plays at all odd stages, and Player II plays at all even (including limit) stages. Player II is 
      required to play $p_0 = \mathbbm{1}_\PPP$. If, during the course of play, a limit ordinal $\alpha < \beta$ is 
      reached such that $\seq{p_\xi}{\xi < \alpha}$ has no lower bound in $\PPP$, then Player I wins. Otherwise, 
      Player II wins. 
      
    \item $\PPP$ is \emph{$\beta$-strategically closed} if Player II has a winning strategy in $\Game_\beta(\PPP)$.
  \end{enumerate}
\end{definition}

\begin{definition} \label{square_forcing_def}
  Given cardinals $1 < \lambda \leq \kappa$ with $\kappa$ regular and uncountable, we let $\SSS(\kappa,{<} \lambda)$ denote the partial order defined by the following clauses: 
  \begin{enumerate}
   \item A condition in $\SSS(\kappa,{<} \lambda)$ is a sequence $p = \seq{\calC^p_\alpha}{\alpha \leq \gamma^p}$ with $\gamma^p \in \acc(\kappa)$ such that the following statements hold for all $\alpha,\beta \in \acc(\gamma^p + 1)$: 
   \begin{enumerate}
    \item $\calC^p_\alpha$ is a collection of club subsets of $\alpha$ with $0 < \betrag{\calC^p_\alpha} < \lambda$. 
    
    \item If $C \in \calC^p_\beta$ with $\alpha \in \acc(C)$, then $C \cap \alpha \in \calC^p_\alpha$.
  \end{enumerate}
   
   \item The ordering of $\SSS(\kappa,{<} \lambda)$ is given by end-extension, i.e., $q \leq_{\SSS(\kappa,{<} \lambda)} p$ holds  if and only if $\gamma^q \geq \gamma^p$ and,   for all $\alpha \leq \gamma^p$, $\calC^q_\alpha = \calC^p_\alpha$.
  \end{enumerate}  
\end{definition}

In the following, we will usually write $\SSS(\kappa, \lambda)$ instead of    $\SSS(\kappa, {<} \lambda^+)$.  
The proof of the following lemma is standard and follows, for example, from the proofs of \cite[Proposition 33 and Lemma 35]{MR3129734}.

\begin{lemma}
  Let $1 < \lambda \leq \kappa$ be cardinals with $\kappa$ regular and uncountable. 
  \begin{enumerate}
    \item $\SSS(\kappa,{<} \lambda)$ is $\omega_1$-closed. 
    
    \item $\SSS(\kappa,{<} \lambda)$ is $\kappa$-strategically closed. 
    
    \item If $G$ is $\SSS(\kappa,{<} \lambda)$-generic over $\VV$, then $\bigcup G$ is a $\square(\kappa, {<} \lambda)$-sequence in $\VV[G]$.
  \end{enumerate}
\end{lemma}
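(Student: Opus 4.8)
The plan is to treat the three clauses in turn. Clauses (1) and (2) are pure ``coherent approximation'' bookkeeping, and I expect the real work to sit in the non-threading half of clause (3).

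For (1), let $\seq{p_n}{n<\omega}$ be descending. If the heights $\gamma^{p_n}$ are eventually constant some $p_n$ is already a lower bound, so I only need to handle $\gamma:=\sup_n\gamma^{p_n}\in\acc(\kappa)$ with $\cf(\gamma)=\omega$. I would take as lower bound the sequence of length $\gamma+1$ extending $\bigcup_n p_n$ with $\calC_\gamma:=\{C\}$, where $C:=\Set{\gamma^{p_n}}{n<\omega}$; since $C$ has order type $\omega$ it has no accumulation point below $\gamma$, so the coherence clause in Definition~\ref{square_forcing_def} is vacuous at $\gamma$, and $0<\betrag{\{C\}}<\lambda$ because $\lambda>1$. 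For (2) I would describe a winning strategy for Player~II in which, after each of her moves $p_\alpha$, she fixes a distinguished singleton top family $\calC^{p_\alpha}_{\gamma^{p_\alpha}}=\{d_\alpha\}$ chosen so that $d_\beta=d_\alpha\cap\gamma^{p_\beta}$ for all earlier Player~II moves $p_\beta$: at a successor stage she caps Player~I's last condition with one new top level whose club is her last distinguished club $d_\beta$ together with $\gamma^{p_\beta}$, the current height, and a cofinal $\omega$-tail, so that the only new accumulation points are $\gamma^{p_\beta}$ and the new height and coherence is immediate; at a limit stage $\alpha<\kappa$ her earlier moves are cofinal in the play and the distinguished clubs cohere, so she caps $\bigcup_{\xi<\alpha}p_\xi$ with the single club $d:=\bigcup\Set{d_\beta}{\beta<\alpha\text{ a Player II stage}}$, which is club in the new height and coherent below since $d\cap\eta=d_\beta\cap\eta\in\calC^{p_\beta}_\eta$ whenever $\eta\in\acc(d_\beta)$. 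This is always a lower bound of the play so far, so Player~II never loses. The delicate point is propagating coherence of the distinguished clubs through limit stages of uncountable cofinality, which is exactly what the successor-stage recipe secures.

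For (3): clauses (1)(a) and (1)(b) of the definition of a $\square(\kappa,{<}\lambda)$-sequence are read off from the conditions, and $\dom(\bigcup G)=\acc(\kappa)$ because $\Set{p}{\gamma^p\geq\beta}$ is dense for every $\beta<\kappa$ (iterate the extension moves of (2)). To rule out a thread I would run the standard argument: assuming $p\Vdash$ ``$\dot D$ is a club subset of $\kappa$ threading $\bigcup G$'', build in $V$ a descending $\seq{p_\xi}{\xi<\kappa}$ below $p$ with strictly increasing, continuous heights and singleton distinguished top clubs $d_\xi$ such that $p_\xi\Vdash\dot D\cap\gamma^{p_\xi}=\check d_\xi$ and $p_\xi$ forces $\gamma^{p_\xi}$ to be a limit point of $\dot D$. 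At a successor stage one extends $\omega$ times, at each step deciding a longer proper initial segment of $\dot D$ and one more of its elements, takes a lower bound via (1), and caps with a new top level at the resulting limit point $\delta$ of $\dot D$, with club $\dot D\cap\delta$ --- crucially, the hypothesis that $\dot D$ threads $\bigcup G$ makes coherence of this club with the already-decided lower levels automatic; at a limit stage one takes the natural lower bound, the distinguished clubs again cohering. Then $C:=\bigcup_\xi d_\xi$ is a club subset of $\kappa$ in $V$. Carrying this out while simultaneously meeting a bookkept enumeration of the dense subsets of $\SSS(\kappa,{<}\lambda)$ --- so that the upward closure $H$ of $\seq{p_\xi}{\xi<\kappa}$ is $V$-generic with $p\in H$ --- gives $\dot D^H=C$, whence $C$ threads $\bigcup H$; but $\Set{q\leq p}{C\cap\alpha\notin\calC^q_\alpha\text{ for some }\alpha\in\acc(C)\text{ with }\alpha\leq\gamma^q}$ is dense below $p$ (given $q$, extend past some accumulation point $\alpha$ of $C$ and put at level $\alpha$ a club family avoiding $C\cap\alpha$), so $H$ meets it, contradicting that $C$ threads $\bigcup H$. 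I expect the interplay between this last genericity step and the bookkeeping to be the main obstacle; the remaining details are the routine ones found, for instance, in the proofs of {\cite[Proposition 33 and Lemma 35]{MR3129734}}.
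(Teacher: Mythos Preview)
Your treatments of (1) and (2) are correct and standard; the paper itself does not prove this lemma but simply cites it as routine, pointing to \cite[Proposition 33 and Lemma 35]{MR3129734}.

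Your argument for the non-threading half of (3), however, has a genuine gap. You propose to build in $\VV$ a descending $\kappa$-sequence $\seq{p_\xi}{\xi<\kappa}$ whose upward closure $H$ is $\VV$-generic, by meeting a bookkept enumeration of all dense subsets of $\SSS(\kappa,{<}\lambda)$. This is impossible: no nontrivial forcing has a $\VV$-generic filter in $\VV$, and concretely the dense subsets of $\SSS(\kappa,{<}\lambda)$ cannot be enumerated in $\kappa$ steps. Even setting that aside, the dense set you invoke at the end is defined in terms of $C=\bigcup_\xi d_\xi$, which depends on the entire construction and hence cannot appear in any advance bookkeeping. You rightly flag this step as ``the main obstacle,'' but it is in fact fatal to the approach. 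The standard argument stops after $\omega$ steps: build $p=p_0\geq p_1\geq\cdots$ so that $p_{n+1}$ decides some $\delta_n\in\dot D$ with $\gamma^{p_n}<\delta_n<\gamma^{p_{n+1}}$; then the lower bound $q$ with $\gamma^q=\sup_n\gamma^{p_n}$ and $\calC^q_{\gamma^q}=\{\{\gamma^{p_n}:n<\omega\}\}$ forces $\gamma^q\in\acc(\dot D)$, hence $\dot D\cap\gamma^q=\{\gamma^{p_n}:n<\omega\}$, while simultaneously forcing $\delta_0\in\dot D\cap\gamma^q$ --- a contradiction, since $\delta_0$ lies strictly between $\gamma^{p_0}$ and $\gamma^{p_1}$. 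Your own idea of capping with a controlled singleton top club is exactly the right move; the error is running past the first limit stage, where the contradiction is already available, into an unworkable transfinite argument.
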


We next consider a forcing poset meant to add a thread to a $\square(\kappa, {<} \lambda)$-sequence.

\begin{definition}\label{definition:POthreadSquare}
 Let $1 < \lambda \leq \kappa$ be cardinals with $\kappa$ regular and uncountable, and let $\vec{\calC} 
  = \seq{\calC_\alpha}{\alpha < \kappa}$ be a $\square(\kappa, {<} \lambda)$-sequence. We let  $\TTT(\vec{\calC})$ denote the partial order whose underlying set is $\bigcup\Set{\calC_\alpha}{\alpha \in \acc(\kappa)}$ and whose ordering is given by end-extension, i.e. $D \leq_{\TTT(\vec{\calC})} C$ holds if and only if $C=D \cap \sup(C)$.
\end{definition}

In what follows, if $\PPP$ is a partial order and $\theta$ is a cardinal, then we let $\PPP^\theta$ denote the full-support product of $\theta$ copies of $\PPP$.

\begin{lemma} \label{dense_closed_lemma_1}
 Let $1 < \lambda \leq \kappa$ be cardinals with $\kappa$ regular and uncountable, let $\dot{\calC}$ be the canonical $\SSS(\kappa,{<}\lambda)$-name for the  $\square(\kappa, {<} \lambda)$-sequence added by $\SSS(\kappa,{<}\lambda)$ and let $\dot{\TTT}$ be the canonical $\SSS(\kappa,{<}\lambda)$-name   for $\TTT(\dot{\calC})$. For all $0 < \theta < \lambda$, the partial order $\SSS(\kappa,{<}\lambda)* \dot{\TTT}^\theta$ has  a dense $\kappa$-directed closed subset.
\end{lemma}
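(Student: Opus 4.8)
The plan is to exhibit an explicit dense subset $\mathbb{D}$ of $\SSS(\kappa,{<}\lambda)*\dot{\TTT}^\theta$ consisting of conditions in which the threads are "fully decided" up to the height of the square-sequence part, and then to check that $\mathbb{D}$ is $\kappa$-directed closed. Concretely, a natural candidate for $\mathbb{D}$ is the set of conditions $\langle p, \langle \check{C}^p_\xi \mid \xi<\theta\rangle\rangle$ where $p\in\SSS(\kappa,{<}\lambda)$ with top level $\gamma^p\in\acc(\kappa)$, and, for each $\xi<\theta$, $C^p_\xi$ is an \emph{actual} club subset of $\gamma^p$ (a ground-model object, so that the second coordinate is a check-name) lying in $\calC^p_{\gamma^p}$, i.e.\ $C^p_\xi\in\calC^p_{\gamma^p}$. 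Since $p$ forces $C^p_\xi\cap\alpha\in\calC_\alpha$ for every $\alpha\in\acc(C^p_\xi)$, such a pair is genuinely a condition in the iteration: it forces $\check C^p_\xi\in\dot\TTT$ for each $\xi$. First I would verify density: given an arbitrary $\langle p,\dot q\rangle$, extend $p$ to some $p'$ deciding, for each $\xi<\theta$, a value $C_\xi\in\calC^{p'}_{\delta}$ for $\dot q(\xi)$ for some $\delta\le\gamma^{p'}$ with $\delta\in\acc(\kappa)$ (possible because $\SSS(\kappa,{<}\lambda)$ is $\omega_1$-closed and the generic $\square$-sequence has entries that are sets of clubs); then further extend the square part so that its top level equals (or sits above) $\sup(C_\xi)$ for all $\xi$ simultaneously, using coherence of $\calC$ to arrange $C_\xi\in\calC_{\gamma}$ at the new top level $\gamma$. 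This lands below $\langle p,\dot q\rangle$ and inside $\mathbb{D}$.

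The core of the argument is $\kappa$-directed closure of $\mathbb{D}$. Given a directed family $\{\langle p_a, \langle \check{C}^a_\xi\mid \xi<\theta\rangle\rangle \mid a\in A\}\subseteq\mathbb{D}$ with $|A|<\kappa$, directedness forces the $p_a$'s to be pairwise compatible in $\SSS(\kappa,{<}\lambda)$, hence (by end-extension) linearly ordered by $\le_{\SSS(\kappa,{<}\lambda)}$ up to agreement, so they have a natural "union" $p_\infty$ with $\gamma^{p_\infty}=\sup_a\gamma^{p_a}=:\gamma$; similarly, for each fixed $\xi<\theta$, the clubs $C^a_\xi$ are pairwise end-extensions of one another (because compatibility in $\dot{\TTT}^\theta$ means $C^a_\xi$ and $C^b_\xi$ are $\le_{\TTT}$-comparable), so $C^\infty_\xi:=\bigcup_a C^a_\xi$ is a club in $\gamma$. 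To get a lower bound inside $\mathbb{D}$ one must define $p_\infty$ as a \emph{condition}, i.e.\ assign a set of clubs $\calC^{p_\infty}_\gamma$ at the new top level. Here is where I set $\calC^{p_\infty}_\gamma$ to be the $\subseteq$-downward closure (along accumulation points) of $\bigcup_a\calC^{p_a}_{\gamma^{p_a}}$ together with the threads $\{C^\infty_\xi\mid \xi<\theta\}$; coherence conditions at smaller accumulation points are inherited from the $p_a$'s, which agree on their common domain, and at $\gamma$ itself one checks clause (1)(b) of Definition \ref{square_forcing_def} directly using that each $C^\infty_\xi$ is the union of coherent pieces and that $0<|\calC^{p_\infty}_\gamma|<\lambda$ since $\theta<\lambda$ and $|A|<\kappa\le$ (one needs $\lambda$ regular here — but $\lambda$ may fail to be regular; instead one keeps $\calC^{p_\infty}_\gamma$ small by only adding the $\theta$ new threads plus the single "limit" of each coherent branch, yielding size $\le\theta<\lambda$). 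Then $\langle p_\infty,\langle\check C^\infty_\xi\mid\xi<\theta\rangle\rangle\in\mathbb{D}$ is the required lower bound.

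\textbf{Main obstacle.} The delicate point is bounding $|\calC^{p_\infty}_\gamma|$ below $\lambda$ while still having a legitimate condition: one cannot simply throw in all of $\bigcup_a\calC^{p_a}_{\gamma^{p_a}}$, since that set can have size $|A|\cdot\sup_a|\calC^{p_a}_{\gamma^{p_a}}|$, which need not be $<\lambda$. The fix is to observe that coherence pins down $\calC^{p_\infty}_\gamma$ to contain \emph{only} sets of the form $C\cap\gamma$ where... no — at the \emph{top} level $\gamma$ itself there is no coherence constraint forcing which clubs of $\gamma$ to include, so one is free to include as few as one likes. Thus I would define $\calC^{p_\infty}_\gamma=\{C^\infty_\xi\mid \xi<\theta\}$ exactly (this has size $\le\theta<\lambda$, and is nonempty), and check clause (1)(b): if $\alpha\in\acc(C^\infty_\xi)$ then $\alpha<\gamma$, so $\alpha\le\gamma^{p_a}$ for some $a$ with $\alpha\in\acc(C^a_\xi)$, whence $C^\infty_\xi\cap\alpha=C^a_\xi\cap\alpha\in\calC^{p_a}_\alpha=\calC^{p_\infty}_\alpha$ by the induction/agreement of the $p_a$'s. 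This makes $p_\infty$ a condition, it end-extends each $p_a$, and it forces each $\check C^\infty_\xi\in\dot\TTT$ with $\check C^\infty_\xi\le_{\dot\TTT}\check C^a_\xi$; hence $\mathbb{D}$ is $\kappa$-directed closed, completing the proof. The only real care needed is the bookkeeping that the various $p_a$ genuinely agree below $\gamma$ (directedness plus end-extension ordering) and that each $C^\infty_\xi$ is unbounded in $\gamma$ (which holds because $\gamma=\sup_a\gamma^{p_a}$ and $\sup(C^a_\xi)=\gamma^{p_a}$ for $a$ in a cofinal subset of $A$, using that each $\langle p_a,\dots\rangle\in\mathbb{D}$ has its threads reaching the top).
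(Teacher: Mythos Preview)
Your proposal is correct and follows the same approach as the paper: your dense set $\mathbb{D}$ is exactly the paper's $\UUU$, and the paper simply says ``standard arguments'' for what you spell out in detail. Your identification of the crucial point --- that one should define $\calC^{p_\infty}_\gamma = \{C^\infty_\xi \mid \xi < \theta\}$ rather than try to include limits of all the $\calC^{p_a}_{\gamma^{p_a}}$, so that $|\calC^{p_\infty}_\gamma| \leq \theta < \lambda$ --- is precisely why the hypothesis $\theta < \lambda$ appears, and your verification of the coherence clause at the new top level is correct.

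One minor correction to your density sketch: you cite $\omega_1$-closure of $\SSS(\kappa,{<}\lambda)$ to justify extending $p$ to decide all $\theta$ many names $\dot q(\xi)$, but this only suffices when $\theta \leq \omega$. For uncountable $\theta < \lambda \leq \kappa$ you should instead invoke the $\kappa$-strategic closure of $\SSS(\kappa,{<}\lambda)$: play the game $\Game_\kappa$ with yourself as Player~I deciding one name at each odd stage and Player~II following the winning strategy at even and limit stages; since $\theta < \kappa$, this terminates with a single $p'$ deciding all $\dot q(\xi)$. After that, a single one-step extension of $p'$ (adding a top level $\gamma > \gamma^{p'}$ with $\calC_\gamma$ containing suitable extensions of the decided threads) lands you in $\mathbb{D}$.
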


\begin{proof}
  Fix $\theta < \lambda$, and let $\UUU$ denote the set of all $\langle p,\dot{f}\rangle \in \SSS(\kappa,{<}\lambda) * \dot{\TTT}^\theta$  such that, for all $\xi < \theta$, there is a $D_\xi \in \calC^p_{\gamma^p}$ with  
  $p \Vdash_{\SSS(\kappa,{<}\lambda)}\anf{\dot{f}(\xi) = \check{D}_\xi}.$ Then standard arguments show that $\UUU$ is dense in   $\SSS(\kappa,{<}\lambda) * \dot{\TTT}^\theta$ and $\kappa$-directed closed.
\end{proof}

There is also a natural forcing notion to add a $\square^{\mathrm{ind}}(\kappa, \lambda)$-sequence by initial segments.

\begin{definition} \label{ind_square_forcing_def}
  Given infinite, regular cardinals $\lambda < \kappa$, we let $\PPP(\kappa, \lambda)$  denote the partial order defined by the following clauses: 
  \begin{enumerate}
   \item A condition in $\PPP(\kappa, \lambda)$ is a matrix 
   $$p ~ = ~  \seq{C^p_{\alpha, i}}{\alpha \leq \gamma^p, ~ i(\alpha)^p \leq i < \lambda}$$ 
  with  $\gamma^p \in \acc(\kappa)$ and $i(\alpha)^p < \lambda$ for all $\alpha \leq \gamma^p$ such that the following statements hold for all $\alpha,\beta \in \acc(\gamma^p+1)$: 
   \begin{enumerate}
		\item If $i(\alpha)^p \leq i < \lambda$, then $C^p_{\alpha, i}$ is a club subset of  $\alpha$. 
		
		\item If  $i(\alpha)^p \leq i < j < \lambda$, then $C^p_{\alpha, i} \subseteq C^p_{\alpha, j}$. 
		
		\item If $\alpha \in \acc(C^p_{\beta, i})$ for some $i(\beta)^p \leq i < \lambda$, then we have $i\geq i(\alpha)^p$ and $ C^p_{\alpha, i} = C^p_{\beta, i} \cap \alpha$. 
		
		\item If $\alpha < \beta$, then there is $i < \lambda$ with $\alpha \in \acc(C^p_{\beta, i})$.  
  \end{enumerate}
   
   \item The ordering of $\PPP(\kappa, \lambda)$ is given by end-extension, i.e., $q \leq_{\PPP(\kappa, \lambda)} p$ holds  if and only if $\gamma^q \geq \gamma^p$ and the following statements hold for all $\alpha \in \acc(\gamma^p+1)$: 
   	\begin{enumerate}
		\item $i(\alpha)^q = i(\alpha)^p$. 
		
		\item If $i(\alpha)^p \leq i < \lambda$, then $C^q_{\alpha, i} = C^p_{\alpha, i}$. 
	\end{enumerate}
  \end{enumerate}  
\end{definition}

The following results are proven in {\cite[Section 7]{systems}} or follow directly from the proofs presented there.

\begin{lemma}\label{lemma:PropertiesGenericIndexedSquare}
  Let $\lambda < \kappa$ be infinite, regular cardinals. 
  \begin{enumerate}
    \item $\PPP(\kappa, \lambda)$ is $\lambda$-directed closed. 
    
    \item $\PPP(\kappa, \lambda)$ is $\kappa$-strategically closed. 
    
    \item If $G$ is $\PPP(\kappa, \lambda)$-generic over $\VV$, then $\bigcup G$ is a $\square^{\mathrm{ind}}(\kappa, \lambda)$-sequence in $\VV[G]$ and for every regular cardinal $\mu<\kappa$ and every $i<\lambda$, the set $$\Set{\alpha\in E^\kappa_\mu}{\exists p\in G ~ [\alpha\leq\gamma^p\wedge i(\alpha)^p=i]}$$ is stationary in $\kappa$ in $\VV[G]$.
  \end{enumerate}
\end{lemma}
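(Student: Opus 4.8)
These three statements are from {\cite[Section~7]{systems}}; I sketch how I would organise the proof, treating the three items in turn.

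\textbf{Part (1).} The first observation I would make is that in $\PPP(\kappa, \lambda)$ any two \emph{compatible} conditions are comparable: if $r\leq p,q$ and, without loss of generality, $\gamma^p\leq\gamma^q$, then by the definition of the ordering $q\restriction(\gamma^p+1)=r\restriction(\gamma^p+1)=p$, so $q\leq p$. Consequently every directed subset of $\PPP(\kappa, \lambda)$ is linearly ordered by $\leq_{\PPP(\kappa, \lambda)}$, and, the map $p\mapsto\gamma^p$ being injective and order-reversing on it, a directed subset of size ${<}\lambda$ is the range of a strictly decreasing sequence $\seq{p_\xi}{\xi<\delta}$ for some $\delta<\lambda$. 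For $\delta$ a successor there is nothing to do; for $\delta\in\Lim$ put $\gamma=\sup_{\xi<\delta}\gamma^{p_\xi}$, so $\cof(\gamma)\leq\delta<\lambda<\kappa$ and $\gamma\in\acc(\kappa)$. Fixing a strictly increasing cofinal $\seq{\xi_m}{m<\cof(\gamma)}$ in $\delta$ and, using clauses~(1)(c)--(d) of Definition~\ref{ind_square_forcing_def} for the $p_{\xi_{m+1}}$, indices $k_m<\lambda$ with $\gamma^{p_{\xi_m}}\in\acc\bigl(C^{p_{\xi_{m+1}}}_{\gamma^{p_{\xi_{m+1}}},k_m}\bigr)$, one checks that the $C^{p_{\xi_m}}_{\gamma^{p_{\xi_m}},i}$ for $i\geq\sup_m k_m$ (still ${<}\lambda$) form a coherent $\subseteq$-chain of clubs, so that setting $\gamma^q=\gamma$, $i(\gamma)^q=\sup_m k_m$, $C^q_{\gamma,i}=\bigcup_m C^{p_{\xi_m}}_{\gamma^{p_{\xi_m}},i}$, and copying the $p_{\xi_m}$ below $\gamma$ defines a condition $q\leq p_\xi$ for all $\xi<\delta$. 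Verifying clauses~(1)(a)--(d) for $q$ is routine.

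\textbf{Part (2).} For $\kappa$-strategic closure I would exhibit a winning strategy for Player~II in the game $\Game_\kappa(\PPP(\kappa, \lambda))$. The guiding idea is that Player~II keeps her \emph{own} top levels at index $0$ and maintains a coherent ``spine''. At a successor even stage, after Player~I plays $p_{\nu+1}$, she plays $p_{\nu+2}$ with $\gamma^{p_{\nu+2}}=\gamma^{p_{\nu+1}}+\omega$, $i(\gamma^{p_{\nu+2}})^{p_{\nu+2}}=0$, and top clubs defined by cases: writing $i^\flat$ for the least index at which $C^{p_{\nu+1}}_{\gamma^{p_{\nu+1}},\cdot}$ accumulates at her previous top level $\gamma^{p_\nu}$, she \emph{absorbs} Player~I's top club for $i\geq i^\flat$, setting $C^{p_{\nu+2}}_{\gamma^{p_{\nu+2}},i}=C^{p_{\nu+1}}_{\gamma^{p_{\nu+1}},i}\cup\{\gamma^{p_{\nu+1}}\}\cup\Set{\gamma^{p_{\nu+1}}+n}{n<\omega}$, while for $i<i^\flat$ she ``routes around'' Player~I's move through $\gamma^{p_\nu}$, setting $C^{p_{\nu+2}}_{\gamma^{p_{\nu+2}},i}=C^{p_\nu}_{\gamma^{p_\nu},i}\cup\{\gamma^{p_\nu}\}\cup\Set{\gamma^{p_{\nu+1}}+n}{n<\omega}$. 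At a limit even stage $\mu<\kappa$ she plays $p_\mu$ with $\gamma^{p_\mu}=\sup_{\xi<\mu}\gamma^{p_\xi}$, copies the common data below $\gamma^{p_\mu}$, sets $i(\gamma^{p_\mu})^{p_\mu}=0$, and $C^{p_\mu}_{\gamma^{p_\mu},i}=\bigcup\Set{C^{p_\xi}_{\gamma^{p_\xi},i}}{\xi<\mu\text{ even}}$ for every $i<\lambda$. An induction along the even stages shows that Player~II's top clubs form a coherent $\subseteq$-chain at \emph{every} index $i<\lambda$ simultaneously, and since her top levels are cofinal in $\gamma^{p_\mu}$ this makes each $C^{p_\mu}_{\gamma^{p_\mu},i}$ a club in $\gamma^{p_\mu}$ and preserves clauses~(1)(c)--(d); clause~(1)(d) at the new top level holds because every limit ordinal below $\gamma^{p_\mu}$ lies below some $\gamma^{p_\xi}$ and is therefore an accumulation point of some $C^{p_\xi}_{\gamma^{p_\xi},k}\subseteq C^{p_\mu}_{\gamma^{p_\mu},k}$. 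So $p_\mu$ is a condition extending all earlier moves, and Player~II wins. The reason this requires strategic rather than ${<}\kappa$-closure, and the most delicate point of the verification, is precisely that by keeping her spine at a fixed index Player~II guarantees that a single index ${<}\lambda$ threads cofinally many of her moves even when $\cof(\mu)\geq\lambda$, while simultaneously absorbing whatever levels and clubs Player~I added in between so that clauses~(1)(c)--(d) do not break.

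\textbf{Part (3).} Fix a $\PPP(\kappa, \lambda)$-generic $G$ and let $\vec{\calC}=\bigcup G$. Density of the sets $\Set{p}{\gamma^p\geq\zeta}$ for $\zeta<\kappa$ (obtained by adjoining single limit levels and invoking Part~(2) at limits) shows that $\vec{\calC}$ is a matrix indexed as in Definition~\ref{ind_square_def}, and clauses~(1)--(5) there hold for $\vec{\calC}$ because they hold clause-by-clause in every condition of $G$. For the stationarity assertion, fix a regular cardinal $\mu<\kappa$, an $i<\lambda$, a condition $p$, and a name $\dot E$ for a club in $\kappa$; I would build a strictly decreasing sequence $\seq{p_\xi}{\xi<\mu}$ below $p$ in which each $p_{\xi+1}$ forces some ordinal of $[\gamma^{p_\xi},\gamma^{p_{\xi+1}})$ into $\dot E$, the successor and limit steps taken exactly as Player~II plays in Part~(2) except that the spine is kept at index $i$ rather than $0$ (so that the absorbing/routing-around dichotomy is now applied relative to the index $i$). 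Then $\alpha=\sup_{\xi<\mu}\gamma^{p_\xi}$ has cofinality $\mu$ and the resulting lower bound $q$ satisfies $\gamma^q=\alpha$, $i(\alpha)^q=i$, and forces $\dot E\cap\alpha$ to be unbounded in $\alpha$, hence $\check\alpha\in\dot E$. This shows $\Set{\alpha\in E^\kappa_\mu}{\exists p\in G\,(\alpha\leq\gamma^p\wedge i(\alpha)^p=i)}$ is stationary in $\VV[G]$; applying it with $i+1$ in place of $i$ and with $\mu=\omega$, and invoking Proposition~\ref{threadprop}, also yields clause~(6) of Definition~\ref{ind_square_def}, since a club $D$ and index $i$ witnessing the failure of condition~$(6')$ would give some $\alpha\in\acc(D)$ with $i(\alpha)=i+1$ and yet $i\geq i(\alpha)$. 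The main obstacle throughout is the bookkeeping of Part~(2) and its reprise here: defining Player~II's (respectively our) new top clubs at all indices below $\lambda$ at once so as to absorb everything added since the previous spine level while keeping the spine coherent at a fixed index, and then checking every clause of Definition~\ref{ind_square_forcing_def}, is the technical heart of the argument.
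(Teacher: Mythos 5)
The paper does not prove this lemma itself --- it defers to {\cite[Section 7]{systems}} --- so there is no in-paper argument to compare against; your proof is a correct reconstruction of the standard one. The key points are all present: the observation that compatible conditions are comparable (reducing directed-closure to chain-closure), the winning strategy in which Player II pins her top levels at a fixed index and maintains coherence of her top clubs at \emph{every} index $i<\lambda$ simultaneously (which is exactly what lets limits of cofinality $\geq\lambda$ go through), and the reuse of that strategy with spine index $i$ to get the stationarity of $\Set{\alpha\in E^\kappa_\mu}{i(\alpha)=i}$, from which clause (6) follows via Proposition \ref{threadprop}. The only loose end is the degenerate successor step where Player I does not strictly extend (so $i^\flat$ is undefined); there one simply absorbs at every index, and this does not affect the argument.
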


We finally introduce a forcing notion to add a thread through a $\square^{\mathrm{ind}}(\kappa, \lambda)$-sequence.

\begin{definition}\label{definition:POthreadIndexedSquare}
  Let $\vec{\calC} = \seq{C_{\alpha, i}}{\alpha < \kappa, ~ i(\alpha) \leq i < \lambda}$ be a $\square^{\mathrm{ind}}(\kappa,\lambda)$-sequence. 
Given $i < \lambda$,  let  $\TTT_i(\vec{\calC}\hspace{1.6pt})$ denote the partial order whose underlying set is $$\Set{C_{\alpha, i}}{\alpha \in \acc(\kappa), ~ i(\alpha)\leq i}$$ and whose ordering is given by end-extension.  
\end{definition}

The following result is proven in {\cite[Section 3]{hayut_lh}}.

\begin{lemma}\label{lem: amalgamation for indexed square}
  Let $\lambda < \kappa$ be infinite, regular cardinals, let $\dot{C}$ 
  be the canonical $\PPP(\kappa, \lambda)$-name for the generic $\square^{\mathrm{ind}}(\lambda, \kappa)$-sequence and, for $i < \kappa$, let $\dot{\TTT}_i$   be a $\PPP(\kappa,\lambda)$-name for $\TTT_i(\dot{C})$.  
\begin{enumerate}
 \item For all $i < \lambda$, $\PPP(\kappa,\lambda) * \dot{\TTT}_i$ has a dense $\kappa$-directed closed subset. 

\item Let $G$ be $\PPP(\kappa,\lambda)$-generic over $\VV$ and let $\bigcup G=\seq{C_{\alpha,i}}{\alpha<\kappa, ~ i(\alpha)\leq i<\lambda}$ be the generic $\square^{\mathrm{ind}}(\lambda, \kappa)$-sequence.  Given $i < j < \lambda$, the map $$\Map{\pi_{i,j}}{\dot{\TTT}_i^G}{\dot{\TTT}_j^G}{C_{\alpha, i}}{C_{\alpha, j}}$$ is a forcing projection in $\VV[G]$.  
\end{enumerate} 
\end{lemma}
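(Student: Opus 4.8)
The plan is to prove both parts of Lemma~\ref{lem: amalgamation for indexed square} by a standard amalgamation/fusion argument for $\kappa$-strategically-closed-type forcings, adapted to the indexed square setting. For part (1), I would first identify a convenient dense subset $\UUU_i$ of $\PPP(\kappa,\lambda)*\dot{\TTT}_i$: namely the set of conditions $\langle p,\dot{D}\rangle$ for which there is a fixed ordinal $D^* \in C^p_{\gamma^p,\,i'}$ — where $i' = \max\{i, i(\gamma^p)^p\}$ — with $p \Vdash \dot{D} = \check{D}^*$, i.e. conditions that already decide the $i$-th thread as an explicit club subset of $\gamma^p$ lying on level $\gamma^p$ of $\TTT_i(\dot{\calC})$. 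A genericity argument using $\kappa$-strategic closure of $\PPP(\kappa,\lambda)$ (Lemma~\ref{lemma:PropertiesGenericIndexedSquare}(2)) shows $\UUU_i$ is dense. The point of passing to $\UUU_i$ is that on $\UUU_i$ the second coordinate carries no independent information, so directed closure of $\UUU_i$ reduces to the following amalgamation claim: given a $\kappa$-directed family $\Set{\langle p_s, \check{D}_s\rangle}{s\in S}$ from $\UUU_i$, the $D_s$ cohere (any two are end-extensions of one another, since they are drawn from coherent square matrices on overlapping domains, using clause (3) of Definition~\ref{ind_square_forcing_def}), and one can build a common lower bound by taking unions of the matrices and adjoining a new top level $\gamma = \sup_s \gamma^{p_s}$ whose new clubs are obtained by threading: set $C_{\gamma,i} = \bigcup_s D_s$ for indices $i$ at or above the relevant $i(\cdot)$, and similarly union the higher threads, setting $i(\gamma) = i(\gamma^{p_s})$ for any/all $s$ with $\gamma^{p_s}$ cofinal — one must check that the resulting matrix still satisfies clauses (1)--(4), in particular that $\gamma \in \acc(C_{\gamma,i})$ witnesses clause (4) for pairs $\alpha < \gamma$, and that no bad club $D$ through the full $\square^{\mathrm{ind}}$-sequence is created, which is automatic since we are below $\kappa$.

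**The projection.** For part (2), fix the generic $\square^{\mathrm{ind}}(\kappa,\lambda)$-sequence $\bigcup G = \seq{C_{\alpha,i}}{\alpha<\kappa,\ i(\alpha)\le i<\lambda}$ and $i<j<\lambda$. The map $\pi_{i,j}$ sending $C_{\alpha,i}\mapsto C_{\alpha,j}$ (for $\alpha\in\acc(\kappa)$ with $i(\alpha)\le i$) is well-defined because $C_{\alpha,i}$ determines $\alpha = \sup(C_{\alpha,i})$ and $\alpha\in\acc(\kappa)$ with $i(\alpha)\le i < j$, so $C_{\alpha,j}$ is defined; and it is order-preserving by coherence (clause (4) of Definition~\ref{ind_square_def}: if $\alpha\in\acc(C_{\beta,i})$ then $C_{\alpha,i}=C_{\beta,i}\cap\alpha$, and likewise at $j$, so end-extension is preserved). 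To see it is a projection, I must verify: (a) $\pi_{i,j}(\mathbbm{1}) = \mathbbm{1}$, which holds since both trees have the same trivial condition at level $\omega$ (or one checks the maximal-element convention directly); and (b) if $q \le_{\TTT_j} \pi_{i,j}(C_{\alpha,i})$, say $q = C_{\beta,j}$ with $C_{\alpha,j} = C_{\beta,j}\cap\alpha$, then there is $r \le_{\TTT_i} C_{\alpha,i}$ with $\pi_{i,j}(r) \le_{\TTT_j} q$. The natural candidate is $r = C_{\beta,i}$ — but here is the subtlety: we need $i(\beta)\le i$, i.e. that the $i$-th thread is even defined at $\beta$. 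This need not hold a priori. The fix is the density fact in Lemma~\ref{lemma:PropertiesGenericIndexedSquare}(3): for every regular $\mu<\kappa$ and every index, stationarily many $\alpha$ have $i(\alpha)$ equal to that index; combined with clause (3) of Definition~\ref{ind_square_def} (going up a thread can only decrease $i(\cdot)$) and a genericity argument, one shows that in $\VV[G]$, below any node $C_{\alpha,j}$ of $\TTT_j$ there is a node $C_{\beta,j}$ with $i(\beta)\le i$, and then $r = C_{\beta,i}$ works. Equivalently, one shows the image of $\pi_{i,j}$ is dense in $\TTT_j(\bigcup G)$, which together with order-preservation and the coherence-based lifting gives the projection property.

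**Main obstacle.** The genuinely delicate point, and the one I would spend the most care on, is clause~(b) of the projection verification — specifically guaranteeing that below an arbitrary $C_{\beta,j}\in\TTT_j$ one can find $C_{\gamma,j}$ with $i(\gamma)\le i$. This is where the stationarity conclusion of Lemma~\ref{lemma:PropertiesGenericIndexedSquare}(3) is essential and must be deployed carefully: one needs that the generic $\square^{\mathrm{ind}}$-sequence has, club-often in each $\alpha\in\acc(\kappa)$, points $\gamma\in\acc(C_{\beta,j})$ with $i(\gamma)\le i$ so that $C_{\gamma,j} = C_{\beta,j}\cap\gamma$ and $C_{\gamma,i}$ is defined and threads correctly; a naive union of the $p$'s in $G$ does not obviously produce this, so the argument should either be phrased via density of a suitable set of conditions in $\PPP(\kappa,\lambda)$ forcing such $\gamma$ to appear, or by a direct construction exploiting that the generic matrix is built by initial segments and the lower-bound amalgamation from part~(1) is free to set $i(\text{new top level})$ as small as we like. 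Everything else — well-definedness, monotonicity, the density of $\UUU_i$, the matrix-coherence bookkeeping — is routine modification of the arguments for $\PPP(\kappa,\lambda)$ itself, cited from \cite{systems}, and I would present those steps tersely.
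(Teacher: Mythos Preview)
The paper does not contain its own proof of this lemma; the sentence immediately preceding the statement reads ``The following result is proven in \cite[Section 3]{hayut_lh},'' and no argument is given. So there is nothing in the paper to compare against directly. That said, your outline for part~(1) is the standard one and matches what the paper does for the analogous non-indexed Lemma~\ref{dense_closed_lemma_1}: pass to the dense set of conditions that decide the thread as the $i$-th club at their own top level, and amalgamate by unions. (Minor slip: you write ``$D^* \in C^p_{\gamma^p,i'}$,'' but $C^p_{\gamma^p,i'}$ is a single club, not a collection; you mean $D^* = C^p_{\gamma^p,i}$, and $\UUU_i$ should simply require $i(\gamma^p)^p \leq i$.)

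For part~(2), however, your argument has a genuine gap. You correctly observe that the natural candidate $r = C_{\beta,i}$ may fail because $i(\beta)>i$, and you propose to remedy this by finding $\gamma$ with $i(\gamma)\leq i$ and $C_{\gamma,j}\leq_{\TTT_j} C_{\beta,j}$. But you have overlooked a second obstruction: even when $i(\gamma)\leq i$, to obtain $C_{\gamma,i}\leq_{\TTT_i} C_{\alpha,i}$ you need $\alpha\in\acc(C_{\gamma,i})$, whereas from $\alpha\in\acc(C_{\beta,j})$ and $\beta\in\acc(C_{\gamma,j})$ you only get $\alpha\in\acc(C_{\gamma,j})$. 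Since the inclusion $C_{\gamma,i}\subseteq C_{\gamma,j}$ is typically proper, this does not follow. (And you cannot simply arrange $\beta\in\acc(C_{\gamma,i})$ to transfer the information, because clause~(4) of Definition~\ref{ind_square_def} forbids this when $i(\beta)>i$.) Consequently, ``the image of $\pi_{i,j}$ is dense in $\TTT_j$'' together with order-preservation is \emph{not} enough for the projection property; you need the stronger statement that for each $C_{\alpha,i}\in\TTT_i$, the image of the cone below $C_{\alpha,i}$ is dense below $C_{\alpha,j}$. The correct density argument in $\PPP(\kappa,\lambda)$ must produce, given $\alpha<\beta$ with $i(\alpha)\leq i$, $i(\beta)\leq j$, and $\alpha\in\acc(C_{\beta,j})$, an extension to a new top level $\delta$ with $i(\delta)\leq i$ such that $\alpha\in\acc(C_{\delta,i})$ \emph{and} $\beta\in\acc(C_{\delta,j})$ simultaneously---threading $C_{\delta,i}$ through $C_{\alpha,i}$ while threading $C_{\delta,j}$ through $C_{\beta,j}$, and checking this is compatible with clause~(3). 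Your appeal to the stationarity statement in Lemma~\ref{lemma:PropertiesGenericIndexedSquare}(3) does not by itself yield this; a more hands-on construction of the extension is required.
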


\begin{lemma} \label{deciding_lemma}
 Let $\lambda < \kappa$ be infinite, regular cardinals, let $G$ be $\PPP(\kappa,\lambda)$-generic over $\VV$, and let $\vec{\calC}$ be the generic $\square^{\mathrm{ind}}(\lambda, \kappa)$-sequence in $\VV[G]$. 
 Assume that, in $\VV[G]$, for each $i < \lambda$, 
  $\dot{x}_i$ is a $\TTT_i(\vec{\calC}\hspace{1.6pt})$-name for an element of $\VV[G]$. 
 Given    $t \in \TTT_0(\vec{\calC}\hspace{1.6pt})$, there is a condition $s \leq_{\TTT_0(\vec{\calC}\hspace{1.6pt})} t$ such that $\pi_{0,i}(s)$ decides the value of $\dot{x}_i$ for all $i < \lambda$. 
\end{lemma}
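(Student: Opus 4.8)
The key structural fact is Lemma \ref{lem: amalgamation for indexed square}(2): each projection $\pi_{0,i} \colon \TTT_0(\vec{\calC}\hspace{1.6pt}) \to \TTT_i(\vec{\calC}\hspace{1.6pt})$ is a forcing projection in $\VV[G]$. Consequently, given $t \in \TTT_0(\vec{\calC}\hspace{1.6pt})$ and a $\TTT_i(\vec{\calC}\hspace{1.6pt})$-name $\dot{x}_i$, I can first find some $t_i \leq_{\TTT_0} t$ whose image $\pi_{0,i}(t_i)$ decides $\dot x_i$: namely, extend $\pi_{0,i}(t)$ inside $\TTT_i$ to a condition deciding $\dot x_i$, then lift that extension back through the projection to obtain $t_i \leq_{\TTT_0} t$ with $\pi_{0,i}(t_i)$ deciding $\dot x_i$. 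The remaining task is to do this simultaneously for all $i < \lambda$, i.e.\ to build a single $s \leq_{\TTT_0} t$ that works for every $i$ at once.

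The natural approach is a recursion of length $\lambda$ inside $\VV[G]$. Build a $\leq_{\TTT_0}$-decreasing sequence $\seq{t_i}{i < \lambda}$ with $t_0 = t$: at stage $i+1$, apply the projection argument above to $t_i$ and $\dot x_i$ to get $t_{i+1} \leq_{\TTT_0} t_i$ with $\pi_{0,i}(t_{i+1})$ deciding $\dot x_i$; since the conditions above $\pi_{0,i}(t_i)$ in $\TTT_i$ remain above it, and $\pi_{0,i}$ is order-preserving, $\pi_{0,i}(t_{i+1}) \leq_{\TTT_i} \pi_{0,i}(t_i)$ still decides $\dot x_i$ — and crucially, because $\TTT_0(\vec{\calC}\hspace{1.6pt})$ is ordered by end-extension, any further $\leq_{\TTT_0}$-extension of $t_{i+1}$ projects to a $\leq_{\TTT_i}$-extension of $\pi_{0,i}(t_{i+1})$, which therefore continues to decide $\dot x_i$ (a fixed element of $\VV[G]$). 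So the decisions, once made, are preserved. At limit stages $i < \lambda$ I need a lower bound for $\seq{t_j}{j < i}$ in $\TTT_0(\vec{\calC}\hspace{1.6pt})$; a lower bound of an end-extension chain is just the union of the (increasing) clubs, provided that union is still club in its supremum and still appears in the generic sequence. Taking $s$ to be a lower bound of the whole sequence $\seq{t_i}{i < \lambda}$ then gives the desired condition.

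The main obstacle is exactly these limit stages: the full-support-product flavor means that $\TTT_0(\vec{\calC}\hspace{1.6pt})$ need not be $\lambda^+$-closed, so a descending $\lambda$-chain need not automatically have a lower bound. I expect this is handled by the $\kappa$-directed closure of the dense subset of $\PPP(\kappa,\lambda) * \dot{\TTT}_0$ from Lemma \ref{lem: amalgamation for indexed square}(1) together with the $\kappa$-strategic closure of $\PPP(\kappa,\lambda)$ itself: one works with a condition in $G$ forcing enough about the chain and uses the closure to amalgamate. Concretely, since $\lambda < \kappa$, a descending sequence of length $\lambda$ through (the dense $\kappa$-directed closed subset of) the two-step iteration has a lower bound, and pulling this back to $\VV[G]$ yields the lower bound of $\seq{t_j}{j<i}$ in $\TTT_0(\vec{\calC}\hspace{1.6pt})$. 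Alternatively, one can simply note that $\TTT_i(\vec{\calC}\hspace{1.6pt})$ sits inside $\VV[G]$ and the construction of the chain can be carried out by a single application of closure of the iteration rather than step-by-step, choosing at the outset, for each $i<\lambda$, a compatible family of decisions; the $\kappa$-directed closure then amalgamates these into the single condition $s$. Either way, once the lower bound exists, verifying that $\pi_{0,i}(s)$ decides each $\dot x_i$ is immediate from the preservation observation above. I would present the step-by-step recursion and invoke the relevant closure at limits.
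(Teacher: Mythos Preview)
Your ingredients are right---the projection property of $\pi_{0,i}$ and the $\kappa$-directed closure of the dense subset of $\PPP(\kappa,\lambda)*\dot{\TTT}_0$---but your packaging via a length-$\lambda$ recursion has a real obstacle at limit stages that you acknowledge but do not resolve. The forcing $\TTT_0(\vec{\calC})$ is \emph{not} ${<}\kappa$-closed in $\VV[G]$: if $\seq{C_{\alpha_j,0}}{j<i}$ is a descending chain with $i$ a limit and $\alpha=\sup_j\alpha_j$, then any lower bound $C_{\beta,0}$ would have $\alpha\in\acc(C_{\beta,0})$ (as a limit of accumulation points), forcing $i(\alpha)\leq 0$ by clause (4) of Definition~\ref{ind_square_def}; but the generic sequence may well have $i(\alpha)>0$. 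So the union need not appear in the sequence, and your ``go back to $\VV$ and use closure of the iteration'' fix, while workable in principle, is awkward to carry out mid-recursion in $\VV[G]$.

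The paper sidesteps this entirely. For each $i<\lambda$, let $\DDD_i$ be the set of $s\in\TTT_0(\vec{\calC})$ such that $\pi_{0,i}(s)$ decides $\dot{x}_i$; since $\pi_{0,i}$ is a projection, $\DDD_i$ is dense open in $\TTT_0(\vec{\calC})$. Now the closure fact you cite is used not to build chains but to conclude that $\TTT_0(\vec{\calC})$ is $\kappa$-\emph{distributive} in $\VV[G]$ (the two-step iteration adds no ${<}\kappa$-sequences, hence neither does the quotient). Distributivity immediately gives a condition $s\leq t$ in $\bigcap_{i<\lambda}\DDD_i$. No recursion, no limit stages, two sentences. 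Your ``alternatively'' paragraph gestures in this direction; the clean formulation is distributivity plus dense-open sets.
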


\begin{proof}
  Work in $\VV[G]$. For all $i < \lambda$, let $\DDD_i$ be the set of $s \in \TTT_0(\vec{\calC}\hspace{1.6pt})$ such that $\pi_{0,i}(s)$ 
  decides the value of $\dot{x}_i$. 
  Since $\pi_{0,i}$ is a projection, the set $\DDD_i$ is dense and open in $\TTT_0(\vec{\calC}\hspace{1.6pt})$. 
  Since $\PPP(\kappa,\lambda) * \dot{\TTT}_0$ has a dense $\kappa$-directed closed subset in $\VV$, it follows  that the partial order $\TTT_0(\vec{\calC}\hspace{1.6pt})$ is $\kappa$-distributive in $\VV[G]$. We can thus find $s \in \bigcap\Set{\DDD_i}{i < \lambda}$   with $s \leq_{\TTT_0(\vec{\calC}\hspace{1.6pt})} t$. Then the condition $s$ is as desired.
\end{proof}


\subsection{Indestructibility}

In this subsection, we outline techniques for arranging so that weak compactness and the tree property 
necessarily hold at a cardinal $\kappa$ after forcing with the partial order $\Add{\kappa}{ 1}$ that adds a Cohen subset to $\kappa$. These methods are 
well-known, so we will just sketch the constructions or refer the reader elsewhere for details.

A classical result of Silver shows that weak compactness of a weakly compact cardinal can be made indestructible under forcing with $\Add{\kappa}{1}$. The proof of this result is described in {\cite[Section 3]{MR495118}}. A modern presentation of these arguments can be found in {\cite[Example 16.2]{MR2768691}}.

\begin{lemma}[Silver]
  Let $\kappa$ be a weakly compact cardinal. Then there is a forcing extension $\VV[G]$ such that $\kappa$ is weakly compact in $\VV[G,H]$ whenever $H$ is $\Add{\kappa}{1}$-generic over $\VV[G]$. 
\end{lemma}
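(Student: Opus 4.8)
The plan is to use the standard Silver master-condition argument combined with a preliminary iteration to make weak compactness ``Laver-indestructible enough'' for the single Cohen forcing $\Add{\kappa}{1}$. First I would fix, in $\VV$, a weakly compact cardinal $\kappa$ and perform an Easton-support iteration $\PPP$ of length $\kappa$ that forces with $\Add{\gamma}{1}$ at each inaccessible $\gamma < \kappa$ (or, equivalently, one may use the ``fast function'' or lottery-style preparation; the precise choice is immaterial). Let $G$ be $\PPP$-generic over $\VV$. The claim to verify is that in $\VV[G]$, for every $\Add{\kappa}{1}$-generic $H$ over $\VV[G]$, the cardinal $\kappa$ remains weakly compact in $\VV[G,H]$.

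The key steps, in order, are: (1) show $\PPP$ has size $\kappa$ and is $\kappa$-c.c., so cardinals and cofinalities up to and including $\kappa$ are preserved and $\kappa = \kappa^{<\kappa}$ still holds in $\VV[G]$; (2) given $H$, fix in $\VV$ a $\Pi^1_1$-statement (coded as an $\in$-statement over $\langle \HH{\kappa}^{\VV[G,H]}, \in, A\rangle$ for an arbitrary $A \subseteq \kappa$) together with a weakly compact embedding $\map{j}{M}{N}$ with critical point $\kappa$, where $M$ is a suitable transitive $\ZFC^-$-model of size $\kappa$ containing $\PPP$, $A$, and everything relevant, and $N$ is transitive; (3) lift $j$ through $\PPP$: since $j(\PPP)$ factors as $\PPP * \dot{\PPP}_{\mathrm{tail}}$ where $\dot{\PPP}_{\mathrm{tail}}$ is $\leq\kappa$-strategically closed in $N[G]$ and has at most $\kappa$-many dense sets to meet (as computed in $N[G]$, which has size $\kappa$), build $G_{\mathrm{tail}}$ generic over $N[G]$ by a diagonalization, obtaining $j\colon M[G] \to N[j(G)]$ with $j(G) = G * G_{\mathrm{tail}}$; (4) lift through $\Add{\kappa}{1}$: here $j(\Add{\kappa}{1})^{N[j(G)]} = \Add{j(\kappa)}{1}^{N[j(G)]}$, the generic $H$ is a condition of size $\kappa$ in this poset below which one must diagonalize, and the master condition is $p^* = \bigcup j``H = \bigcup H$ viewed as a condition in $\Add{j(\kappa)}{1}$; since $j``H \in N[j(G)]$ (it has size $\kappa$ and is essentially $H$ itself as $\crit{j} = \kappa$), $p^*$ lies in $N[j(G)]$, and a diagonalization below $p^*$ through the $\kappa$-many dense sets of $N[j(G)]$ produces $H^*$ with $j``H \subseteq H^*$, yielding the lift $\map{j}{M[G,H]}{N[j(G),H^*]}$; (5) conclude that the $\Pi^1_1$-statement reflects, hence $\kappa$ is weakly compact in $\VV[G,H]$.

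The main obstacle is step (4), the lifting through $\Add{\kappa}{1}$ itself: unlike the usual Laver-preparation setting where one forces with further directed-closed forcing, here the very forcing that could destroy weak compactness is $\Add{\kappa}{1}$, and the master condition $p^*$ has size $\kappa$, which is only admissible because $\Add{j(\kappa)}{1}^{N[j(G)]}$ is ${<}j(\kappa)$-closed in $N[j(G)]$ and $\kappa < j(\kappa)$ there. One must check carefully that $j``H$ is an element of $N[j(G)]$ and not merely a subset — this uses $\crit{j} = \kappa$ so that $j \restriction \kappa = \id$, making $j``H = H \in \VV[G,H] \supseteq N[j(G)]$ only after one arranges $H \in N[j(G)]$, which is the reason the tail forcing and the diagonalization must be carried out inside an ambient model containing $H$; the clean way is to do the whole lifting in $\VV[G,H]$ and note $N[j(G)]$ has size $\kappa$ there, so all diagonalizations go through. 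The remaining verifications — that $\PPP$ is $\kappa$-c.c., that the tail forcing is strategically closed in the right model, and that the reflected statement is genuinely the one we started with — are routine and follow the template of {\cite[Example 16.2]{MR2768691}}, so I would cite that reference rather than reproduce the details.
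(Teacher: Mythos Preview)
The paper does not actually prove this lemma; it merely cites \cite[Section 3]{MR495118} and \cite[Example 16.2]{MR2768691}, the latter of which you also invoke at the end. Your sketch is the standard Silver argument and is essentially correct, but one point needs sharpening.

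The tail $\dot{\PPP}_{\mathrm{tail}} = j(\PPP)/G$ is not $\leq\kappa$-strategically closed as you claim in step (3): its first nontrivial stage is at $\kappa$ itself (which is inaccessible in $N$), where it forces with $\Add{\kappa}{1}^{N[G]} = \Add{\kappa}{1}^{\VV[G]}$, a poset that is only ${<}\kappa$-closed. The entire point of Silver's preparation is that you use the given $H$ as the generic for precisely this stage; only the tail \emph{beyond} stage $\kappa$ is $\leq\kappa$-closed in $N[G*H]$, and that is what you diagonalize. With $j(G) = G * H * G_{{>}\kappa}$ you get $H \in N[j(G)]$ automatically, which is exactly what you need in step (4) to form the master condition $\bigcup H \in \Add{j(\kappa)}{1}^{N[j(G)]}$. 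Your worry about ``arranging $H \in N[j(G)]$'' dissolves once the lift is organized this way. As you have written it, steps (3) and (4) are decoupled: if in (3) you produced a tail generic not involving $H$, there would be no reason for $\bigcup H$ to lie in $N[j(G)]$, and step (4) would stall. This is not a fatal gap --- the fix is exactly the Silver trick --- but it is the heart of the argument and should be made explicit rather than buried.
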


Note that the conclusion of the above result implies that $\kappa$ is also weakly compact in $\VV[G]$, because forcing with $\kappa$-closed partial orders does not add branches to $\kappa$-Aronszajn trees.


\begin{definition}
  Given an uncountable, regular cardinal $\kappa$, we let $\mathrm{TP}^+(\kappa)$ denote the conjunction of $\mathrm{TP}(\kappa)$ and the statement that  for every regular cardinal $\lambda$ with $\lambda^+ < \kappa$, if  $\vec{b}$ is a $\lambda$-ascent path through a tree $\TTT$ of height $\kappa$, then there is a true cofinal branch through $\vec{b}$. 
\end{definition}

  It is easy to see that, if $\kappa$ is a weakly compact cardinal, then $\mathrm{TP}^+(\kappa)$ holds. The proof of the following result is a slight modification of the presentation of Mitchell's consistency proof of the tree property at $\aleph_2$ in {\cite[Section 23]{MR2768691}}.

\begin{lemma}
  Let $\kappa$ be a weakly compact cardinal and let $\mu < \kappa$ be an infinite, regular cardinals with $ \mu=\mu^{{<}\mu}$. Then there is a forcing extension $\VV[G]$ of the ground model $\VV$ such that the following statements hold:
  \begin{enumerate}
    \item $\VV$ and $\VV[G]$ have the same cofinalities below $(\mu^+)^\VV$. 
    
    \item $(\mu^+)^\VV=(\mu^+)^{\VV[G]}$, $\kappa  = (2^\mu)^{\VV[G]} =   (\mu^{++})^{\VV[G]}$ and $\kappa=(\kappa^{<\kappa})^{\VV[G]}$. 
    
    \item If $H$ is $\Add{\kappa}{1}$-generic over $\VV[G]$, then $\mathrm{TP}^+(\kappa)$ holds in $\VV[G,H]$. 
  \end{enumerate}
\end{lemma}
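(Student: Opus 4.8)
The plan is to combine Mitchell-style forcing to bring $\kappa$ down to $\mu^{++}$ with the indexed-square-threading machinery of Section~\ref{section:Forcing_preliminaries}, exactly as in the consistency proof of the tree property at $\aleph_2$, but carrying along enough closure to also kill ascent paths. Concretely, I would first fix in $\VV$ a Silver-style preparation so that the weak compactness of $\kappa$ (and hence $\mathrm{TP}^+(\kappa)$, since weakly compact cardinals satisfy it) survives the eventual Cohen forcing $\Add{\kappa}{1}$; this can be absorbed into the preparatory stage. Then I would force with the Mitchell poset $\MMM$ collapsing $\kappa$ to $\mu^{++}$ while making $\mu^+$ into $2^\mu$. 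Writing $\VV[G]$ for the resulting model, clauses (1) and (2) are immediate from the standard analysis of $\MMM$: $\MMM$ is $\mu^+$-cc after a $\mu$-closed initial part, it preserves cofinalities below $\mu^+$, it collapses everything in the interval $(\mu^+,\kappa)$, and it arranges $\kappa=(\mu^{++})^{\VV[G]}=(2^\mu)^{\VV[G]}$ as well as $\kappa^{<\kappa}=\kappa$ there (the last using that $\MMM$ has size $\kappa$ and suitable chain condition).

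The substance is clause (3). After the further forcing with $\Add{\kappa}{1}$ over $\VV[G]$, the tree property $\mathrm{TP}(\kappa)$ in $\VV[G,H]$ follows from the usual Mitchell argument: given a $\kappa$-tree $\TTT$ in $\VV[G,H]$, one uses the weak compactness of $\kappa$ in $\VV$ (with respect to a suitable $j\colon \VV\to\MM$) together with the factorization of $\MMM\ast\Add{\kappa}{1}$ into a small part and a highly closed ``tail'' quotient, and the key branch-preservation lemma that a $\kappa^+$-cc times $\kappa$-closed product adds no cofinal branch to a $\kappa$-tree from the small part; lifting $j$ then yields a cofinal branch through $\TTT$. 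For the ascent-path clause of $\mathrm{TP}^+(\kappa)$, I would run the same elementary-embedding argument applied to an arbitrary $\lambda$-ascent path $\vec b$ through a tree $\TTT$ of height $\kappa$ with $\lambda^+<\kappa$: push $\vec b$ up by $j$, and in $\MM$ the node $\langle j``\kappa\rangle$-th level of $j(\TTT)$ together with $j(\vec b)$ produces, for each $i<\lambda$, a candidate branch through $\TTT$; the point is that for some fixed $I<\lambda$ all coordinates $i\in[I,\lambda)$ agree on a single cofinal set $B$, and then $\langle I,B\rangle$ is forced to be a true cofinal branch through $\vec b$ in the sense of Definition~\ref{ascent_path_def}, because the tail quotient is too closed to have added any branch-structure that would contradict clauses (\ref{c3c}) and (\ref{c3d}). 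Here the hypothesis $\lambda^+<\kappa$ is what guarantees, via a counting/pigeonhole against the chain condition of the small forcing, that such an $I$ and $B$ can be found in the ground model of the quotient.

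The main obstacle, and the step that needs genuine care rather than citation, is the branch-preservation analysis for \emph{ascent paths} in the presence of $\Add{\kappa}{1}$: one must verify that the relevant tail quotient $\QQQ$ in the factorization $\MMM\ast\Add{\kappa}{1}\cong (\text{small})\ast\QQQ$ not only adds no new cofinal branches to $\kappa$-trees (the classical statement) but also adds no new ``true cofinal branch'' to a $\lambda$-ascent path that was not already determined. This should follow from the same $\kappa^+$-cc$\,\times\,\kappa$-closed product trick, noting that a true cofinal branch $\langle I,B\rangle$ through $\vec b$ is coded by a single subset of $\kappa$ satisfying a $\Pi^1_1$-over-$H_\kappa$ condition, so that the standard Silver argument applies to $B$ as it would to an ordinary branch; the $\kappa$-closure of $\QQQ$ (which survives because $\Add{\kappa}{1}$ is $\kappa$-closed and commutes suitably with the tail) does the rest. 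I would also need to double-check that $\mathrm{TP}^+(\kappa)$, and not merely $\mathrm{TP}(\kappa)$, is preserved by the $\mu$-closed preparatory forcing and is true of weakly compact $\kappa$ — the latter is the easy observation already noted before the lemma — so that the only genuinely new verification is the quotient-closure computation just described.
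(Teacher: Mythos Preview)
Your proposal has a genuine gap in the crucial step. The difficulty is precisely the factorization you gesture at: for the \emph{standard} Mitchell poset $\MMM$, there is no factorization of $\MMM * \Add{\kappa}{1}^{\VV^\MMM}$ into a ``small part'' and a ``highly closed tail'' of the kind you need. The tree $\TTT$ lives in $\VV[G][H]$ and depends on the Cohen generic $H$; to reflect or lift, you need the small part to already contain a generic for something like $\Add{\alpha}{1}$ at an inaccessible $\alpha<\kappa$, so that $\TTT\restriction\alpha$ lands in the small model. Standard Mitchell forcing does nothing at inaccessible stages, so no such $\Add{\alpha}{1}$ is available. Equivalently, in the lifting picture, $j(\MMM*\Add{\kappa}{1})$ factors as $\MMM*(\text{Mitchell tail})*\Add{j(\kappa)}{1}$, and there is no way to commute the $\Add{\kappa}{1}^{\VV[G]}$ past the Mitchell tail (which is only $\mu$-closed, not $\kappa$-closed) to obtain $\MMM*\Add{\kappa}{1}*(\text{something closed})$. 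The Silver preparation is a red herring here: it makes $\kappa$ weakly compact after $\Add{\kappa}{1}$ over the prepared model, but says nothing about $\Add{\kappa}{1}$ over the \emph{Mitchell} extension, and the two Cohen posets $\Add{\kappa}{1}^{\VV^*}$ and $\Add{\kappa}{1}^{\VV^*[G]}$ are genuinely different. (Also, the indexed-square-threading machinery of Section~\ref{section:Forcing_preliminaries} plays no role in this lemma.)

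The paper's fix is exactly to build the anticipation into the iteration: the poset $\QQQ(\kappa)$ is a modified Mitchell forcing that, at each inaccessible $\alpha<\kappa$, additionally adds a generic for $\Add{\alpha}{1}^{\VV[G_\alpha]}$. This yields a projection $\pi_\alpha\colon\QQQ(\kappa)\to\QQQ(\alpha)*\dot{\SSS}_\alpha$ with $\dot{\SSS}_\alpha$ a name for $\Add{\alpha}{1}$. Now, after further forcing with $\Add{\kappa}{1}$, weak compactness reflects the tree $\TTT$ to an $\alpha$-Aronszajn tree $\TTT\restriction\alpha\in\VV[G_\alpha,H_\alpha]$, and $\mu^+$-approximation between $\VV[G_\alpha,H_\alpha]$ and $\VV[G]$ gives the contradiction. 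The same $\mu^+$-approximation argument handles true cofinal branches through ascent paths, replacing your product branch-lemma analysis. Your instinct about the ascent-path case (reflect, then use that a true cofinal branch is coded by a single subset of $\kappa$) is sound, but it only goes through once the iteration has been modified to absorb the Cohen forcing at inaccessible stages.
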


\begin{proof}[Sketch of the proof]
 Given $\alpha\leq\kappa$, let $\dot{\RRR}_\alpha$ denote the canonical $\Add{\mu}{\alpha}$-name with the property that, whenever $G$ is $\Add{\mu}{\alpha}$-generic over $\VV$, then $\dot{\RRR}_\alpha^G=\Add{\mu^+}{1}^{\VV[G]}$. 
By induction on $\alpha\leq\kappa$, we define a sequence $\seq{\QQQ(\alpha)}{\alpha \leq \kappa}$ of partial orders with the property that, if $\alpha\leq\kappa$ is inaccessible and $G$ is $\QQQ(\alpha)$-generic over $\VV$, then $\VV$ and $\VV[G]$ have the same cofinalities below $(\mu^+)^\VV$,  $(\mu^+)^\VV=(\mu^+)^{\VV[G]}$, $\alpha  = (2^\mu)^{\VV[G]} =   (\mu^{++})^{\VV[G]}$, and $\alpha=(\alpha^{<\alpha})^{\VV[G]}$. 

  Fix $\beta \leq \kappa$ and assume that the partial order $\QQQ(\alpha)$ with the above properties has been defined for all $\alpha < \beta$. Given $\alpha<\beta$ inaccessible, let $\dot{\SSS}_\alpha$ denote the canonical $\QQQ(\alpha)$-name with the property that, whenever $G$ is $\QQQ(\alpha)$-generic over $\VV$, then $\dot{\SSS}_\alpha^G=\Add{\alpha}{1}^{\VV[G]}$.   We define the underlying set of the partial order $\QQQ(\beta)$ to consist of triples $\langle p, f, g\rangle$ that satisfy the following statements:  
  \begin{enumerate}
    \item[(a)] $p$ is a condition in $\Add{\mu}{\beta}$. 
    
    \item[(b)] $f$ is a partial function on $\beta$ of cardinality at most $\mu$ with the property that for all $\alpha\in\dom(f)$, $\alpha$ is a successor ordinal and $f(\alpha)$ is an $\Add{\mu}{\alpha}$-name for a condition in $\dot{\RRR}_\alpha$. 
    
    \item[(c)] $g$ is a partial function on $\beta$ of cardinality at most $\mu$ with the property that for all $\alpha\in\dom(g)$, $\alpha$ is an inaccessible cardinal and $g(\alpha)$ is a $\QQQ(\alpha)$-name for a condition in $\dot{\SSS}_\alpha$.  
  \end{enumerate}

  Given triples $\langle p_0, f_0, g_0\rangle$ and  $\langle p_1, f_1, g_1\rangle$ satisfying the above statements, we define $\langle p_1, f_1, g_1\rangle \leq_{\QQQ(\beta)}   \langle p_0, f_0, g_0\rangle$ to hold if and only if the following statements hold:
  \begin{enumerate}
    \item[(i)] $p_1 \leq_{\Add{\mu}{\beta}} p_0$. 
    
    \item[(ii)] $\dom(f_1) \supseteq \dom(f_0)$ and $\dom(g_1) \supseteq \dom(g_0)$. 
    
    \item[(iii)] If $\alpha \in \dom(f_0)$, then $$p_1 \restriction \alpha \Vdash_{\Add{\mu}{\alpha}} \anf{f_1(\alpha)   \leq_{\dot{\RRR}_\alpha} f_0(\alpha)}.$$ 
      
    \item[(iv)] If $\alpha \in \dom(g_0)$, then $$\langle p_1 \restriction \alpha, ~  f_1 \restriction \alpha, ~ g_1 \restriction \alpha\rangle     \Vdash_{\QQQ(\alpha)} \anf{g_1(\alpha) \leq_{\dot{\SSS}_\alpha}g_0(\alpha)}.$$
  \end{enumerate}
  By representing $\QQQ(\beta)$ as a projection of a product $\Add{\mu}{\beta}\times\PPP$ for some $\mu^+$-closed partial order $\PPP$, it is possible to show that $\QQQ(\beta)$ satisfies the assumptions listed above. 
  
  Set $\QQQ = \QQQ(\kappa)$. Given $\alpha < \kappa$ inaccessible, the canonical map 
  $$\Map{\pi_\alpha}{\QQQ}{\QQQ(\alpha)*\dot{\SSS}_\alpha}{\langle p,f,g\rangle}{\langle\langle p 
  \restriction \alpha, ~  f \restriction \alpha, ~ g \restriction \alpha\rangle, ~ g(\alpha)\rangle}$$ 
  is a projection. Let $G$ be $\PPP$-generic over $\VV$, let $H$ by $\Add{\kappa}{1}$-generic over $V[G]$, 
  and let $G_\alpha*H_\alpha$ be the filter on  $\PPP(\alpha)*\dot{\SSS}_\alpha$ induced by $G$ via $\pi_\alpha$.
  Suppose for sake of contradiction that $\TTT$ is a $\kappa$-Aronszajn tree (with $\kappa$ as an underlying set) in $\VV[G,H]$. 
  A routine application of weak compactness yields an inaccessible cardinal $\alpha < \kappa$ 
  such that $\TTT \restriction \alpha \in \VV[G_\alpha, H_\alpha]$ and $\TTT \restriction \alpha$ is an $\alpha$-Aronszajn tree 
  in $\VV[G_\alpha, H_\alpha]$. However, standard arguments show that $\mu^+$-approximation holds between $\VV[G_\alpha,H_\alpha]$ and $\VV[G]$ 
  (see {\cite[Definition 21.2]{MR2768691}}).  It follows that $\alpha$-Aronszajn trees in $\VV[G_\alpha, H_\alpha]$ 
  cannot gain cofinal branches in $\VV[G, H]$. However, $\TTT \restriction \alpha$ does have a cofinal branch in $\VV[G, H]$, namely 
  the set of predecessors of any element of $\TTT(\alpha)$, which is a contradiction.
  
  Similarly, if $\lambda\leq\mu$ and $\vec{b}$ is a $\lambda$-ascent path through a $\kappa$-tree $\TTT$ in $\VV[G,H]$  
  with no true cofinal branch, then there is an inaccessible $\alpha < \kappa$ such that $\TTT \restriction \alpha, ~  
  \vec{b} \restriction \alpha \in \VV[G_\alpha, H_\alpha]$ and there is no true cofinal branch through $\vec{b} \restriction \alpha$ in 
  $\VV[G_\alpha, H_\alpha]$. As above, an appeal to $\mu^+$-approximation yields a contradiction.  
\end{proof}


\section{Consistency results for trees}\label{ConsResultsTrees}

Building on the results of the previous section, we prove consistency results that will provide upper bounds for the consistency strength of two interactions between ascent paths and special trees listed in Table \ref{table:Implications_Special_Ascent_Succ_Reg}.

\begin{theorem} \label{no_special_no_ascent_thm}
  Let  $\kappa$ be an uncountable cardinal with the property that $\kappa=\kappa^{{<}\kappa}$ and $\mathbbm{1}_{\Add{\kappa}{1}}\Vdash\mathrm{TP}^+(\check{\kappa})$. Then the following statements hold in a cofinality-preserving forcing extension of the ground model:
  \begin{enumerate}
    \item There are $\kappa$-Aronszajn trees. 
    
    \item There are no special $\kappa$-trees. 
    
    \item For all $\lambda$ with $\lambda^+ < \kappa$, there are no $\kappa$-Aronszajn trees   with $\lambda$-ascent paths.
  \end{enumerate}
\end{theorem}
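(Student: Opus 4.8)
The plan is to force with the partial order $\SSS(\kappa,{<}\kappa)$ of Definition~\ref{square_forcing_def}, which adds a $\square(\kappa,{<}\kappa)$-sequence $\vec{\calC}$ by initial segments (or, if needed, with a mild variant of it so that the generic also directly codes a $\kappa$-Aronszajn tree). Since $\kappa=\kappa^{{<}\kappa}$, this forcing has size $\kappa$ and hence satisfies the $\kappa^+$-chain condition, and it is $\kappa$-strategically closed, so it is cofinality-preserving. Statement~(1) should then be immediate: applying Todor\v{c}evi\'{c}'s walks on ordinals to a representative $C$-sequence derived from $\vec{\calC}$, exactly as in the proof of Theorem~\ref{indexed_square_thm}, yields a $\kappa$-Aronszajn tree in the generic extension $\VV[G]$.

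The engine for (2) and (3) is a resurrection argument. For every $\theta<\kappa$ the two-step iteration $\SSS(\kappa,{<}\kappa)\ast\dot{\TTT}(\dot{\calC})^\theta$ has a dense $\kappa$-directed closed subset of size $\le\kappa$ (proved as in Lemma~\ref{dense_closed_lemma_1}). Hence, by the folklore fact that a $\kappa$-directed closed, nowhere $\kappa$-distributive partial order of size $\le\kappa$ under $\kappa=\kappa^{{<}\kappa}$ is forcing-equivalent to $\Add{\kappa}{1}$, forcing with $\Add{\kappa}{1}$ over such an extension is equivalent to forcing with $\Add{\kappa}{1}$ directly over $\VV$. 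Writing $H$ for a $\TTT(\vec{\calC})^\theta$-generic over $\VV[G]$ and $K$ for an $\Add{\kappa}{1}$-generic over $\VV[G][H]$, the model $\VV[G][H][K]$ therefore satisfies $\mathrm{TP}^+(\kappa)$, and $\VV$, $\VV[G]$, $\VV[G][H]$, $\VV[G][H][K]$ all have the same bounded subsets of $\kappa$ and the same cofinalities.

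For (2): if $\TTT$ were a special $\kappa$-tree in $\VV[G]$, then passing with $\theta=1$ to $\VV[G][H][K]$ — in which $\kappa$ is still regular — Todor\v{c}evi\'{c}'s absoluteness theorem for specialness (Fact~\ref{non_stationary_fact}) shows that $\TTT$ still has no cofinal branch there; but $\TTT$ is still a $\kappa$-tree, hence a $\kappa$-Aronszajn tree in $\VV[G][H][K]$, contradicting $\mathrm{TP}(\kappa)$. For (3): suppose $\TTT$ were a $\kappa$-Aronszajn tree in $\VV[G]$ carrying a $\lambda$-ascent path $\vec{b}$ with $\lambda^+<\kappa$; we may assume $\lambda$ infinite and regular, since for finite $\lambda$ a $\lambda$-ascent path trivially produces a cofinal branch and for singular $\lambda$ it restricts to a $\cof(\lambda)$-ascent path. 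Passing with $\theta=\lambda$ to $\VV[G][H][K]\models\mathrm{TP}^+(\kappa)$, the sequence $\vec{b}$ is still a $\lambda$-ascent path through $\TTT$, so $\mathrm{TP}^+(\kappa)$ yields a true cofinal branch $\langle I,B\rangle$ through $\vec{b}$, whence $\Set{b_\alpha(I)}{\alpha\in B}$ is a cofinal branch through $\TTT$ in $\VV[G][H][K]$. The crux is then to show that $B$ must already lie in $\VV[G]$: each initial segment $B\cap\beta$ with $\beta\in B$ is definable from $\vec{b}$, $I$ and $\beta$ (namely $B\cap\beta=\Set{\alpha<\beta}{b_\alpha(i)<_\TTT b_\beta(i)\text{ for all }I\le i<\lambda}$), so $B$ is a cofinal branch through a $\kappa$-tree defined in $\VV[G]$, and one uses that $\TTT(\vec{\calC})^\lambda$ is a full-support $\lambda$-fold product, together with the $\kappa$-distributivity and $\kappa$-approximation of $\SSS(\kappa,{<}\kappa)\ast\dot{\TTT}(\dot{\calC})^\lambda$ over $\VV$, to run a genericity/decidability argument across the $\lambda$ mutually generic coordinates — in the spirit of Lemma~\ref{deciding_lemma} — showing $B$ cannot genuinely depend on $H$; since $\Add{\kappa}{1}$ is $\kappa$-closed over $\VV[G]$ and so adds no cofinal branch to $\TTT$, this forces $B\in\VV[G]$, contradicting that $\TTT$ is $\kappa$-Aronszajn.

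The main obstacle is exactly this last reflection step in (3). Producing \emph{some} cofinal branch through $\TTT$ in the resurrection model is routine, but the thread-forcing genuinely does add cofinal branches to many $\kappa$-Aronszajn trees of $\VV[G]$ — that is what resurrects $\mathrm{TP}^+(\kappa)$ — so one must pin down the feature of a tree carrying a \emph{short} ($\lambda^+<\kappa$) ascent path, namely the definability of the sets $B\cap\beta$ combined with the product structure of $\TTT(\vec{\calC})^\lambda$, that obstructs the corresponding true cofinal branch from being added. A possible alternative for (3) is to specialize $\TTT$ directly with Baumgartner's $\kappa$-chain condition forcing $\PPP(\TTT)$ and invoke Lemma~\ref{lucke_lemma}, but the delicacy there is that $\PPP(\TTT)$ need not be cofinality-preserving once $\TTT$ has a $\lambda$-ascent path, so that route would still have to be reconciled with the resurrection analysis.
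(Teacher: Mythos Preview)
Your plan for (1) and (2) is largely sound, with two remarks. First, the extra $\Add{\kappa}{1}$-generic $K$ is unnecessary: since $\SSS(\kappa,{<}\kappa) * \dot{\TTT}(\dot{\calC})^\theta$ already has a dense $\kappa$-directed closed subset of size $\kappa$, it is itself forcing equivalent to $\Add{\kappa}{1}$, so $\VV[G][H]$ already satisfies $\mathrm{TP}^+(\kappa)$. Second, (1) follows more directly from the observation that the tree of threads $\TTT(\vec{\calC})$ is $\kappa$-Aronszajn; there is no need to invoke walks.

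The genuine gap is, as you yourself recognize, the reflection step in (3), and your sketched argument does not close it. The $\kappa$-approximation property simply \emph{fails} between $\VV[G]$ and $\VV[G][H]$: the threading forcing adds a club $D \subseteq \kappa$ with $D \cap \alpha \in \VV[G]$ for every $\alpha$ but $D \notin \VV[G]$, so no appeal to approximation or distributivity will prevent $\TTT(\vec{\calC})^\lambda$ from adding $B$. Nor is the product structure of $\TTT(\vec{\calC})^\lambda$ obviously helpful, since $\mathrm{TP}^+(\kappa)$ hands you a single true cofinal branch in the full extension, not one per coordinate that you could play off against the others.

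The paper's approach is quite different and avoids this difficulty entirely. It forces with $\SSS(\kappa,2)$ rather than $\SSS(\kappa,{<}\kappa)$, and for (3) takes $\theta = 2$ --- independently of $\lambda$. Letting $H_0 \times H_1$ be $\QQQ^2$-generic over $\VV[G]$, each $\VV[G][H_\varepsilon]$ is an $\Add{\kappa}{1}$-extension of $\VV$ and hence satisfies $\mathrm{TP}^+(\kappa)$, yielding true cofinal branches $\langle I_\varepsilon, B_\varepsilon\rangle \in \VV[G][H_\varepsilon]$ through $\vec{b}$. Working in $\VV[G][H_0][H_1]$ (where $\kappa$ is still regular, since $\PPP * \dot{\QQQ}^2$ has a dense $\kappa$-closed subset), one uniformizes to a common index $I_* \geq \max(I_0,I_1)$ and defines from each $B_\varepsilon$ a modified set $A_\varepsilon = \Set{\alpha < \kappa}{i_{\alpha,\alpha_\varepsilon} \leq I_*}$, where $\alpha_\varepsilon = \min(B_\varepsilon \setminus (\alpha+1))$. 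A short tree-order computation shows that each $\langle I_*, A_\varepsilon\rangle$ is again a true cofinal branch through $\vec{b}$, and --- crucially --- that $A_0 = A_1$. Since $A_\varepsilon$ is definable from $I_*$ and $B_\varepsilon$, we have $A_0 \in \VV[G][H_0] \cap \VV[G][H_1]$, and mutual genericity (the Product Lemma) gives $\langle I_*, A_0\rangle \in \VV[G]$. Then $\Set{b_\alpha(I_*)}{\alpha \in A_0}$ is a cofinal branch through $\TTT$ in $\VV[G]$, a contradiction. Two mutually generic threads suffice to trap the branch in the intersection; neither $\lambda$ threads nor any approximation property is needed.
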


\begin{proof}
  Let $\PPP = \SSS(\kappa, 2)$ be the forcing from Definition \ref{square_forcing_def}   that adds a $\square(\kappa, 2)$-sequence. Let $\dot{\calC}$ be the canonical  $\PPP$-name for the generically-added $\square(\kappa, 2)$-sequence, and let $\dot{\QQQ}$ be the canonical $\PPP$-name for the partial order $\TTT(\dot{\calC})$ defined in Definition \ref{definition:POthreadSquare}. By Lemma \ref{dense_closed_lemma_1}, 
  if $\theta \in \{1,2\}$, then the partial order $\PPP * \dot{\QQQ}^\theta$ has a dense $\kappa$-directed closed subset. 
  Moreover, since $\kappa=\kappa^{{<}\kappa}$ holds, the dense subset constructed in the proof of Lemma \ref{dense_closed_lemma_1} 
  has size $\kappa$ and is hence forcing equivalent to $\Add{\kappa}{1}$.

  Let $G$ be $\PPP$-generic over $\VV$, and set $\QQQ=\dot{\QQQ}^G$. 
  We claim that the above statements hold in $\VV[G]$. We first note that, since $\square(\kappa, 2)$ holds in $\VV[G]$, there are $\kappa$-Aronszajn trees in this model. To verify clause (2), note that our assumptions and the above computations imply that, if $H$ is $\QQQ$-generic over $\VV[G]$, then $\kappa$ is a regular cardinal in $\VV[G,H]$ and every $\kappa$-Aronszajn tree in $\VV[G]$ has a cofinal branch in $\VV[G,H]$. This observation directly implies that there are no special 
  $\kappa$-trees in $\VV[G]$. 
  
  To verify clause (3), suppose for sake of a contradiction that, in $\VV[G]$, there is a $\kappa$-Aronszajn   tree $\TTT$ ,  a cardinal $\lambda$ with $\lambda^+ < \kappa$, and a $\lambda$-ascent path $\vec{b} = \seq{b_\alpha}{\alpha < \kappa}$ through $\TTT$. 
  Given $\alpha < \beta < \kappa$, let $i_{\alpha,\beta} < \lambda$ be least such that, for all $i_{\alpha, \beta} \leq j < \lambda$, we have $b_\alpha(j) <_\TTT b_\beta(j)$.  Since $\QQQ$ forces $\mathrm{TP}^+(\kappa)$ to hold, there are $\QQQ$-names $\dot{I}$ and $\dot{B}$ such that 
  $$\mathbbm{1}_\QQQ\Vdash\anf{\textit{The pair $\langle\dot{I}, \dot{B}\rangle$ is a true cofinal branch through $\vec{b}$}}$$ holds in $\VV[G]$.  
  Let $H_0 \times H_1$ be $\QQQ^2$-generic over $\VV[G]$, and, given $\varepsilon < 2$, set $I_\varepsilon=\dot{I}^{H_\varepsilon}$ and $B_\varepsilon=\dot{B}^{H_\varepsilon}$.   Note that, since $\PPP * \dot{\QQQ}^2$ has a dense $\kappa$-directed closed subset in $\VV$, 
  $\kappa$ remains a regular cardinal in $\VV[G,H_0,H_1]$. 

  Work now in $\VV[G,H_0,H_1]$. For each $\alpha < \kappa$ and $\varepsilon < 2$, set $\alpha_\epsilon = \min(B_\varepsilon\setminus (\alpha + 1))$. Using the regularity of  $\kappa$ and the fact that $\lambda < \kappa$, we find $\max(I_0,I_1)\leq I_* < \lambda$ and unbounded subsets   $B_0^* \subseteq B_0$ and $B_1^* \subseteq B_1$ with $i_{\alpha, \alpha_{1-\varepsilon}} \leq I_*$ for all $\varepsilon<2$ and all $\alpha \in B_\varepsilon^*$.    Given $\varepsilon < 2$, set $A_\varepsilon = \Set{\alpha < \kappa}{i_{\alpha, \alpha_\epsilon} \leq I_*}$.
  
  \begin{claim}
    For $\varepsilon < 2$, the pair $\langle I_*, A_\varepsilon\rangle$ is a true cofinal branch through $\vec{b}$.
  \end{claim}
  
  \begin{proof}[Proof of the Claim]
    Fix $\alpha,\beta\in A_\varepsilon$ with $\alpha < \beta$. By definition of $A_\varepsilon$ and the fact     that $I_* \geq I_\epsilon$, it follows that $b_\alpha(i) <_\TTT b_{\alpha_\varepsilon}(i) \leq_\TTT b_{\beta_\varepsilon}(i)$ and $b_\beta(i) <_\TTT b_{\beta_\varepsilon}(i)$ for all $I_* \leq i < \lambda$.    Since $\leq_\TTT$ is a tree order, it follows that $b_\alpha(i) <_\TTT b_\beta(i)$  for all $I_* \leq i < \lambda$ and therefore $\langle I_*, A_\varepsilon\rangle$ satisfies Clause (\ref{c3c}) of Definition    \ref{ascent_path_def}.

    Next, fix $\beta \in A_\varepsilon$ and $\alpha < \beta$ such that $b_\alpha(i) <_\TTT b_\beta(i)$     for all $I_* \leq j < \lambda$. As above, it follows that $b_\alpha(i) <_\TTT b_\beta(i) <_\TTT b_{\beta_\varepsilon}(i)$ and $b_{\alpha_\varepsilon}(i) \leq_\TTT b_{\beta_\varepsilon}(i)$  for all $I_* \leq i < \lambda$.     Thus, again by the fact that $\leq_\TTT$ is a tree order, this implies that $b_\alpha(i) <_\TTT b_{\alpha_\varepsilon}(i)$  for all $I_* \leq i < \lambda$ and hence $\alpha \in A_\varepsilon$.  This allows us to conclude that     the pair $\langle I_*, A_\varepsilon\rangle$ also satisfies Clause (\ref{c3d}) of Definition \ref{ascent_path_def}.
  \end{proof}

  \begin{claim}
    $A_0 = A_1$. 
  \end{claim}

  \begin{proof}[Proof of the Claim]
    We show $A_0 \subseteq A_1$. The proof of the reverse inclusion is symmetric. Thus, fix $\alpha \in A_0$ and set $\beta = \min(B_0^* \setminus \alpha_1)$. Given $I_* \leq i < \lambda$, we then have $$b_\alpha(i) ~  <_\TTT  ~ b_{\alpha_0}(i) ~  \leq_\TTT ~ b_\beta(i) ~  <_\TTT ~ b_{\beta_1}(i),$$    where the last relation holds because $\beta \in B_0^*$.     But we also have $b_{\alpha_1}(i) <_\TTT b_{\beta_1}(i)$ for all $I_* \leq i < \lambda$, because $\alpha_1,\beta_1\in B_1$ and $I_*\geq I_1$.  But then, again using the fact that $\leq_\TTT$ is a tree order, we can conclude that 
    $b_\alpha(i) <_\TTT b_{\alpha_1}(i)$ holds for all $I_* \leq i < \lambda$. This shows that $\alpha$ is an element of $A_1$.
  \end{proof}

 Given $\varepsilon < 2$, the set $A_0=A_\varepsilon$ is definable from $I_*$ and $B_\varepsilon$. Hence the pair $\langle I_*,A_0\rangle$ is a member of $\VV[G][H_\varepsilon]$ for all $\varepsilon<2$. By the Product Lemma, it follows that $\langle I_*,A_0\rangle$ is contained in $\VV[G]$. But then $\Set{b_\alpha(I_*)}{\alpha \in A_0}$ 
  is a cofinal branch through $\TTT$, contradicting the assumption that $\TTT$ is a $\kappa$-Aronszajn tree in   $\VV[G]$.
\end{proof}

Next, we show that, consistently, there are $\kappa$-Aronszajn trees and all such trees contain ascent paths of small width.

\begin{proof}[Proof of Theorem \ref{all_ascent_path_thm}]
  Let $\lambda < \kappa$ be infinite, regular cardinals such that $\kappa=\kappa^{{<}\kappa}$ and $\mathbbm{1}_{\Add{\kappa}{1}}\Vdash\mathrm{TP}(\check{\kappa})$.   Define $\PPP = \PPP(\kappa, \lambda)$ to be the forcing notion from Definition \ref{ind_square_forcing_def} that adds a $\square^{\mathrm{ind}}(\kappa, \lambda)$-sequence. Let $\dot{\calC}$ be the   canonical $\PPP$-name for the generically-added $\square^{\mathrm{ind}}(\kappa, \lambda)$-sequence,   and, for all $i < \lambda$, let $\dot{\QQQ}_i$ be a $\PPP$-name for the partial order $\TTT_i(\dot{\calC})$ defined in Definition \ref{definition:POthreadIndexedSquare}.  
  By Lemma \ref{lem: amalgamation for indexed square}, if $i < \lambda$, then the partial order $\PPP * \dot{\QQQ}_i$ 
  has a dense, $\kappa$-directed closed subset and the assumption that $\kappa=\kappa^{{<}\kappa}$ implies that this dense subset is forcing equivalent to $\Add{\kappa}{1}$. Therefore, by our assumptions, 
we know that $\mathbbm{1}_{\PPP * \dot{\QQQ}_i}\Vdash\mathrm{TP}(\check{\kappa})$ for all  $i < \lambda$. 

  Let $G$ be $\PPP$-generic over $V$ and, for all $i < \lambda$, set $\QQQ_i=\dot{\QQQ}_i^G$. Given $i < j < \lambda$, let $\map{\pi_{i,j}}{\QQQ_i}{\QQQ_j}$ be the projection given by Lemma \ref{lem: amalgamation for indexed square}. 
  Let $$\dot{\calC}^G ~ = ~ \seq{C_{\alpha, i}}{\alpha < \kappa, ~ i(\alpha) \leq i < \lambda}$$ be the 
  realization of $\dot{\calC}$.
  We claim that $\VV[G]$ is the desired forcing extension. 
  Since $\square(\kappa, \lambda)$ holds in $\VV[G]$, there are $\kappa$-Aronszajn trees in this model. We thus 
  verify requirement (2). To this end, work in $\VV[G]$ and fix a $\kappa$-Aronszajn tree $\TTT $.   For all $i < \lambda$, we have $\mathbbm{1}_{\QQQ_i}\Vdash\mathrm{TP}(\check{\kappa})$ and hence we can fix a $\QQQ_i$-name $\dot{B}_i$ for a cofinal branch in $\TTT$.    We may assume that  $\dot{B}_i$ is forced to be $\leq_\TTT$-downward closed, i.e., that $\dot{B}_i$ is forced  to meet every level of $\TTT$.   For each $\alpha < \kappa$, use Lemma \ref{deciding_lemma} to find  $q_\alpha \in \QQQ_0$ such that, for all $i < \lambda$, there is a node $x_{\alpha, i} \in \TTT(\alpha)$ with the property that $$\pi_{0,i}(q_\alpha)\Vdash_{\QQQ_i}\anf{\check{x}_{\alpha,i}\in\dot{B}_i 
  \cap \check{\TTT}(\check{\alpha})}.$$ 
 Define a sequence of functions $\vec{b} = \seq{\map{b_\alpha}{\lambda}{\TTT(\alpha)}}{\alpha < \kappa}$   by setting $b_\alpha(i) = x_{\alpha, i}$ for all $\alpha < \kappa$ and $i < \lambda$. 

  We claim that $\vec{b}$ is a $\lambda$-ascent path through $\TTT$. To see this, fix   $\alpha < \beta < \kappa$. Pick $\gamma_\alpha, \gamma_\beta \in \acc(\kappa)$ such that 
  $q_\alpha = C_{\gamma_\alpha, 0}$ and $q_\beta = C_{\gamma_\beta, 0}$. Without loss of generality, we may assume that $\gamma_\alpha < \gamma_\beta$; the other cases are treated similarly.
  Fix an $i< \lambda$ such that $\gamma_\alpha \in \acc(C_{\gamma_\beta, i})$  and hence $\pi_{0,j}(q_\beta) \leq_{\QQQ_j} \pi_{0,j}(q_\alpha)$ for all $i\leq j<\lambda$. This shows that $\pi_{0,j}(q_\beta) \Vdash_{\QQQ_j}\anf{x_{\alpha, j}, x_{\beta, j} \in \dot{B}_j}$ holds for all $i\leq j<\lambda$.   Since $\dot{B}_i$ is a name for a branch through $\TTT$, this implies that  $$b_\alpha(j) ~ = ~ x_{\alpha, j} ~ <_\TTT ~ x_{\beta, j} ~ = ~  b_\beta(j)$$ holds for all $i \leq j < \lambda$.    Therefore, $\vec{b}$ is a $\lambda$-ascent path through $\TTT$. 
\end{proof}

With the help of a result form \cite{ascending_paths}, it is easy to see that the conclusion of Theorem \ref{all_ascent_path_thm} implies that $\kappa$ is a weakly compact cardinal in $\LL$.

\begin{lemma}\label{lemma:AllAscentSquareFails}
 Let $\kappa$ be a regular cardinal and let $\lambda$ be an infinite cardinal with $\lambda^+<\kappa$. If every $\kappa$-Aronszajn tree contains a $\lambda$-ascending path, then $\square(\kappa)$ fails. 
\end{lemma}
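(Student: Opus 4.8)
The plan is to argue by contraposition: assuming $\square(\kappa)$, I will exhibit a $\kappa$-Aronszajn tree through which there is no $\lambda$-ascending path. Since the hypothesis of the lemma asserts that every $\kappa$-Aronszajn tree admits such a path, this is a contradiction and yields the failure of $\square(\kappa)$. The first step is to extract a \emph{special} $\kappa$-Aronszajn tree from $\square(\kappa)$: fixing a $\square(\kappa)$-sequence $\vec{C}$, the walks-on-ordinals tree $\TTT(\rho_0^{\vec{C}})$ is $\kappa$-Aronszajn (exactly as in the argument of Theorem \ref{indexed_square_thm} and in \cite{MR908147}), and the coherence of $\vec{C}$ furnishes a regressive map witnessing that $\kappa$ is non-stationary with respect to $\TTT(\rho_0^{\vec{C}})$ in the sense of Definition \ref{definition:SpacialTree}; this is folklore and is worked out in \cite{ascending_paths}. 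Call the resulting special $\kappa$-Aronszajn tree $\TTT$.

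The second step invokes Lemma \ref{lucke_lemma}. Because $\lambda^+ < \kappa$ and $\kappa$ is regular, the set $S = E^\kappa_{\lambda^+}$ is stationary in $\kappa$, and clearly $S \subseteq E^\kappa_{{>}\lambda}$. Since $\TTT$ is special, $\kappa$, and hence also $S$, is non-stationary with respect to $\TTT$ (a regressive map witnessing non-stationarity of $\kappa$ restricts to one for $S$). Provided $\kappa$ is \emph{not} the successor of a cardinal of cofinality at most $\lambda$, Lemma \ref{lucke_lemma} now applies and tells us that there is no $\lambda$-ascending path through $\TTT$, which gives the desired conclusion in this case.

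The \textbf{main obstacle} is the one case excluded by Lemma \ref{lucke_lemma}, namely $\kappa = \nu^+$ for a cardinal $\nu$ with $\cof(\nu) \leq \lambda$; note that $\lambda^+ < \kappa$ forces $\lambda^+ \leq \nu$, so that $\cof(\nu) \leq \lambda < \nu$ and $\nu$ is singular. This exclusion is not an artifact: by the first author's construction in \cite{ChrisTreesSquareReflection}, $\square_\nu$ produces a special tree of height $\nu^+$ carrying a $\cof(\nu)$-ascent path, which, since $\cof(\nu) \leq \lambda$, pads out to a $\lambda$-ascending path. The resolution must exploit that $\square(\nu^+) = \square(\kappa)$ is strictly weaker than $\square_\nu$: one builds the walks-tree not from an arbitrary $\square(\kappa)$-sequence but from one thinned so as to control reflection on $E^\kappa_{\cof(\nu)}$, and then re-runs the proof of Lemma \ref{lucke_lemma} against $S = E^\kappa_{\lambda^+}$ directly. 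This careful construction is the ``result from \cite{ascending_paths}'' alluded to in the statement; in the final write-up I would cite it, and the real substance of a complete proof lies in reconstructing it for this cofinality-sensitive case.
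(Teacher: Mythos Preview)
Your proposal contains a genuine error at its first step: the claim that the walks tree $\TTT(\rho_0^{\vec{C}})$ built from an arbitrary $\square(\kappa)$-sequence is \emph{special} is false. The paper itself points this out explicitly later in Section \ref{ConsResultsTrees}: for regular $\kappa > \aleph_1$, the principle $\square(\kappa)$ does \emph{not} imply the existence of a special $\kappa$-Aronszajn tree (e.g.\ in $\LL$ at a Mahlo, non--weakly-compact cardinal). If the walks tree from a $\square(\kappa)$-sequence were always special, that would be a contradiction. The folklore result you are thinking of applies to $\square_\mu$-sequences (where $\otp(C_\alpha) \leq \mu$ supplies the bound needed for the regressive map), not to general $\square(\kappa)$-sequences, whose clubs may have arbitrarily large order type. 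Consequently, the route through Lemma~\ref{lucke_lemma} never gets off the ground, and the elaborate case analysis you set up for successors of singulars is addressing the wrong obstacle.

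The paper's proof is much more direct and avoids specialness entirely: it simply cites \cite[Lemma~4.5]{ascending_paths}, which shows by a combinatorial argument specific to the $\rho_0$-construction that $\TTT(\rho_0^{\vec{C}})$ admits no $\lambda$-ascending path whenever $\vec{C}$ is a $\square(\kappa)$-sequence and $\lambda^+ < \kappa$. That lemma does not factor through Lemma~\ref{lucke_lemma} and carries no exclusion for successors of singulars; the argument exploits the full coherence of the square sequence rather than any decomposition into antichains. So the correct fix is not to refine your case split but to replace the appeal to specialness by this sharper structural result about the walks tree.
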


\begin{proof}
 Assume, towards a contradiction, that there is a $\square(\kappa)$-sequence $\vec{C}$ and let $\TTT=\TTT(\rho_0^{\vec{C}})$ denote the tree of full codes of walks through $\vec{C}$ defined in {\cite[Section 1]{MR908147}} (as in the proof of Theorem \ref{indexed_square_thm}). Then the results of \cite{MR908147} show that $\TTT$ is a $\kappa$-Aronszajn tree and {\cite[Lemma 4.5]{ascending_paths}} implies that $\TTT$ does not contain a $\lambda$-ascending path, contradicting our assumption.  
\end{proof}

It has long been known that, for regular cardinals $\kappa > \aleph_1$, the principle $\square(\kappa)$ does not imply the existence of a special $\kappa$-Aronszajn tree. For example, this is the case in $\LL$ if $\kappa$ is a Mahlo cardinal that is not weakly compact, and it will remain true in the forcing extension 
of $\LL$ by $\Col{\aleph_1}{{<}\kappa}$, in which $\kappa = \aleph_2$. We now show that $\square(\kappa)$ does not even imply the existence of a $\kappa$-Aronszajn tree $\TTT$ such that there is a stationary subset of $\kappa$ that is non-stationary with respect to $\TTT$. In particular, this shows that the various trees that will be constructed from the principle $\square(\kappa)$ in the proofs of Theorem \ref{square_productivity_thm} and \ref{KnasterLayered_thm} in Section \ref{section:ChainConditions} cannot be assumed to be $\kappa$-Aronszajn trees.

\begin{theorem}
  Let $\kappa > \aleph_1$ be a regular cardinal with the property that $\kappa=\kappa^{<\kappa}$  and $\mathbbm{1}_{\Add{\kappa}{1}}\Vdash\mathrm{TP}(\check{\kappa})$. Then the following statements hold in  a cofinality-preserving forcing extension of the ground model:
  \begin{enumerate}
    \item $\square(\kappa)$ holds. 
    
    \item If $\TTT$ is a $\kappa$-Aronszajn tree and $S$ is a stationary subset of $\kappa$, then  $S$ is stationary with respect to $\TTT$.
  \end{enumerate}
\end{theorem}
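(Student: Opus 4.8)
The plan is to mimic the structure of the proof of Theorem~\ref{no_special_no_ascent_thm}, forcing with $\SSS(\kappa,2)$ to add a $\square(\kappa,2)$-sequence and then exploiting the associated threading forcing $\QQQ = \TTT(\dot{\calC})$. First I would let $\PPP = \SSS(\kappa,2)$, let $\dot{\calC}$ be the canonical name for the generic $\square(\kappa,2)$-sequence, and let $\dot{\QQQ}$ name $\TTT(\dot{\calC})$. By Lemma~\ref{dense_closed_lemma_1}, for $\theta\in\{1,2\}$ the poset $\PPP*\dot{\QQQ}^\theta$ has a dense $\kappa$-directed closed subset, which, using $\kappa=\kappa^{<\kappa}$, has size $\kappa$ and is thus forcing equivalent to $\Add{\kappa}{1}$; hence by hypothesis $\mathbbm{1}_{\PPP*\dot{\QQQ}^\theta}\Vdash\mathrm{TP}(\check\kappa)$. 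Pass to $\VV[G]$ where $G$ is $\PPP$-generic. Since $\square(\kappa,2)$ holds there, there are $\kappa$-Aronszajn trees, giving clause (1). The point of using $\square(\kappa,2)$ rather than $\square(\kappa)$ is that the threading forcing for a $\square(\kappa,2)$-sequence already admits a $\kappa$-directed closed dense subset even in its square (this is exactly what Lemma~\ref{dense_closed_lemma_1} gives for $\theta=2$), whereas for $\square(\kappa)$-sequences the thread forcing is not productively nice; and of course $\square(\kappa,2)$ still implies $\square(\kappa)$.

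For clause (2), I would argue by contradiction: suppose in $\VV[G]$ there is a $\kappa$-Aronszajn tree $\TTT$ and a stationary $S\subseteq\kappa$ that is non-stationary with respect to $\TTT$, witnessed by a regressive $r\colon \TTT\restriction S\to\TTT$ together with ordinals $\theta_t<\kappa$ and colorings $c_t\colon r^{-1}``\{t\}\to\theta_t$ injective on $<_\TTT$-chains. Let $\QQQ=\dot\QQQ^G$. Since $\QQQ$ forces $\mathrm{TP}(\kappa)$, forcing with $\QQQ$ adds a cofinal branch $\dot b$ through $\TTT$, which we may assume is forced to be downward closed, hence to meet every level. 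Now force with $\QQQ^2$, obtaining generic branches $b_0$ (from $H_0$) and $b_1$ (from $H_1$); because $\PPP*\dot\QQQ^2$ has a dense $\kappa$-directed closed subset, $\kappa$ remains regular and $S$ remains stationary in $\VV[G,H_0,H_1]$, and moreover the witnessing data $r,\vec\theta,\vec c$ still witness that $S$ is non-stationary with respect to $\TTT$ there. The idea is that for $\varepsilon<2$, $b_\varepsilon$ restricted to level $\alpha$ for $\alpha\in S$ gives an element $t_\varepsilon^\alpha\in\TTT(\alpha)\cap S$ with a predecessor $r(t_\varepsilon^\alpha)<_\TTT t_\varepsilon^\alpha$; by regressivity and a pressing-down/Fodor argument applied to the function $\alpha\mapsto\height{r(t_\varepsilon^\alpha)}{\TTT}$ on the stationary set $S$, one finds a stationary $S_\varepsilon\subseteq S$ on which $r(t_\varepsilon^\alpha)$ is a single fixed node $s_\varepsilon$ (here using that $b_\varepsilon$ is a branch, so the $r$-values lie along a branch and the height function is regressive and eventually constant on a stationary set). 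Then $\{t_\varepsilon^\alpha : \alpha\in S_\varepsilon\}\subseteq r^{-1}``\{s_\varepsilon\}$ is a $<_\TTT$-chain of size $\kappa$, so $c_{s_\varepsilon}$ is an injection of this chain into $\theta_{s_\varepsilon}<\kappa$, a contradiction --- already in $\VV[G,H_\varepsilon]$, indeed in $\VV[G,H_0]$ alone.

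Actually the product $\QQQ^2$ and the absoluteness-between-$\VV[G][H_\varepsilon]$ machinery of Theorem~\ref{no_special_no_ascent_thm} may be unnecessary here: the contradiction can be extracted from a single generic branch, since a special-type witness $r$ is robust and a cofinal branch through a tree with a stationary non-stationary-witnessing set cannot exist by Fact~\ref{non_stationary_fact} --- but the branch only exists in $\VV[G][H]$, not in $\VV[G]$, and $S$'s non-stationarity with respect to $\TTT$ is preserved to $\VV[G][H]$ because the witnessing data is preserved and $\kappa$ stays regular. So the cleanest route is: the witnessing data for ``$S$ non-stationary w.r.t.\ $\TTT$'' persists to $\VV[G][H]$, hence by Fact~\ref{non_stationary_fact} $\TTT$ has no cofinal branch in $\VV[G][H]$; but $\mathrm{TP}(\kappa)$ in $\VV[G][H]$ (or just the fact that $H$ threads the square, producing a branch through the tree of walks) forces a cofinal branch through $\TTT$ --- wait, $\mathrm{TP}(\kappa)$ says \emph{some} $\kappa$-tree has a branch, not that $\TTT$ does. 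So one really does need $\mathbbm{1}_\QQQ\Vdash\mathrm{TP}(\check\kappa)$ to conclude specifically that \emph{the particular Aronszajn tree} $\TTT$ of $\VV[G]$ gains a branch: since $\TTT$ is $\kappa$-Aronszajn in $\VV[G]$ and $\QQQ$ is $\kappa$-closed-ish forcing that forces $\mathrm{TP}$, $\TTT$ must acquire a branch (this is the standard argument that a branchless $\kappa$-tree that stays $\kappa$-Aronszajn contradicts $\mathrm{TP}$ in the extension). That contradicts the persistence of the non-stationarity witness via Fact~\ref{non_stationary_fact}.

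The main obstacle I anticipate is the verification that the witnessing data $r,\langle\theta_t\rangle,\langle c_t\rangle$ for ``$S$ is non-stationary with respect to $\TTT$'' genuinely survives to $\VV[G][H]$: one must check that $S$ remains stationary (this uses the $\kappa$-directed closed dense subset of $\PPP*\dot\QQQ$, so $\QQQ$ adds no new $\kappa$-clubs and no new bounded subsets of $\kappa$, hence no new clubs disjoint from $S$), that $r$ is still regressive (immediate, as $\TTT$, $\TTT\restriction S$, $<_\TTT$ are unchanged), and that each $c_t$ is still injective on $<_\TTT$-chains (also immediate since no new chains of the relevant form are added below $\kappa$ --- in fact the sets $r^{-1}``\{t\}$ only grow if $S$ grows, but $S$ doesn't grow as a subset of the fixed ordinal $\kappa$, and $\TTT\restriction S$ is fixed). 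Once this persistence is in place, Fact~\ref{non_stationary_fact} applies in $\VV[G][H]$ to yield that $\TTT$ is branchless there, while $\mathbbm{1}_\QQQ\Vdash\mathrm{TP}(\check\kappa)$ forces $\TTT$ to gain a cofinal branch, completing the contradiction. A secondary point requiring care is confirming that the dense subset of $\PPP*\dot\QQQ$ from Lemma~\ref{dense_closed_lemma_1} has cardinality exactly $\kappa$ under $\kappa=\kappa^{<\kappa}$, so that it is forcing equivalent to $\Add{\kappa}{1}$ and the hypothesis $\mathbbm{1}_{\Add{\kappa}{1}}\Vdash\mathrm{TP}(\check\kappa)$ can be invoked; this is routine, paralleling the corresponding remark in the proof of Theorem~\ref{no_special_no_ascent_thm}.
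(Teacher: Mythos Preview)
Your proposal has two genuine gaps.

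First, the claim that ``of course $\square(\kappa,2)$ still implies $\square(\kappa)$'' is false; the implication goes the other way. In the paper's notation, $\square(\kappa) = \square(\kappa,1)$ allows only a single club at each level, while $\square(\kappa,2)$ allows up to two, so $\square(\kappa,2)$ is strictly weaker. Forcing with $\SSS(\kappa,2)$ therefore does not give you clause~(1) of the theorem.

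Second, and more seriously, your argument for clause~(2) breaks at the step ``$S$ remains stationary in $\VV[G][H]$.'' The threading forcing $\QQQ$ is only ${<}\kappa$-distributive over $\VV[G]$, not ${<}\kappa$-closed, and it \emph{does} add a new club in $\kappa$: the thread itself. Distributivity is not enough to preserve stationary subsets of $\kappa$; indeed, threading a square sequence is a canonical example of a distributive forcing that kills stationary sets (any stationary set avoided by the sequence becomes non-stationary once a thread appears). The stationary set $S$ lives in $\VV[G]$, not in $\VV$, so the fact that the composite $\PPP*\dot{\QQQ}$ is equivalent to $\Add{\kappa}{1}$ over $\VV$ tells you nothing about preservation of $S$ from $\VV[G]$ to $\VV[G][H]$. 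Without $S$ remaining stationary, Fact~\ref{non_stationary_fact} does not apply in $\VV[G][H]$ and the contradiction collapses.

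The paper handles both points by a different construction: it forces with $\SSS(\kappa,1)$ to get genuine $\square(\kappa)$, and then performs a further ${<}\kappa$-support iteration (from \cite{hayut_lh}) designed precisely so that in the final model every stationary subset $E$ of $\kappa$ has a condition in the threading forcing $\RRR$ below which $E$ remains stationary. A $\kappa^+$-chain-condition argument then reflects the offending tree and the non-stationarity witnesses into an initial segment of the iteration, where the threading (below the right condition) simultaneously produces $\mathrm{TP}(\kappa)$ and keeps $E$ stationary, yielding the contradiction via Fact~\ref{non_stationary_fact}. This extra iteration is not optional: it is exactly what guarantees the stationarity preservation that your argument is missing.
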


\begin{proof}
  Let $\SSS = \SSS(\kappa, 1)$ be  the forcing from Definition \ref{square_forcing_def}   that adds a $\square(\kappa)$-sequence. 
  Let $\dot{C}$ be the canonical $\SSS$-name for the generically-added square sequence, and let $\dot{\RRR}$ be an $\SSS$-name for the threading forcing $\TTT(\dot{C})$ defined in Definition \ref{definition:POthreadSquare}.    By Lemma \ref{dense_closed_lemma_1} and our assumptions, the partial order $\SSS * \dot{\RRR}$ has a dense $\kappa$-directed closed subset.

 Let $G$ be $\SSS$-generic over $\VV$ and set $\RRR=\dot{\RRR}^G$. By \cite[Lemma 3.4 and Corollary 3.5]{hayut_lh}, there is a forcing iteration 
  $\seq{\PPP_\eta, \dot{\QQQ}_\xi}{\eta \leq \kappa^+, \xi < \kappa^+}$ in $\VV[G]$  with supports of  size less than $\kappa$, such that, letting $\PPP = \PPP_{\kappa^+}$, the following statements hold:  
  \begin{enumerate}
    \item[(a)] If $\eta \leq \kappa^+$ and $\dot{\PPP}_\eta$ is the canonical $\SSS$-name for $\PPP_\eta$ in $\VV$, then $\SSS *(\dot{\PPP}_\eta \times \dot{\RRR})$       has a $\kappa$-directed closed dense subset in $\VV$. Moreover, if $\eta < \kappa^+$, then this      subset can be assumed to have size $\kappa$. 
      
    \item[(b)] $\PPP$ satisfies the $\kappa^+$-chain condition in $\VV[G]$. 
          
    \item[(b)] If $H$ is $\PPP$-generic over $\VV[G]$, then $\square(\kappa)$ holds in $\VV[G,H]$ and, for every stationary subset $E$ of $\kappa$ in $\VV[G,H]$, there is a condition $r$ in $\RRR$ such that $r\Vdash_\RRR\anf{\textit{$\check{E}$  is stationary in $\check{\kappa}$}}$ holds in $\VV[G,H]$. 
  \end{enumerate}

  Let $H$ be $\PPP$-generic over $\VV[G]$. For $\eta < \kappa^+$, let 
  $H_\eta$ be the $\PPP_\eta$-generic filter induced by $H$. We claim that $\VV[G,H]$ is the desired model.   Thus, work in $\VV[G,H]$ and suppose, for the sake of contradiction, that $\TTT$ is a $\kappa$-Aronszajn and $E$ is a stationary subset of $\kappa$ that is non-stationary with respect to $\TTT$. By the properties of $\PPP$, 
  we can find $r \in \RRR$ with $r \Vdash_\RRR\anf{\textit{$\check{E}$ is a stationary subset of $\kappa$}}$.  

  Since the tree $\TTT$, the subset $E$, and the maps witnessing that $E$ is non-stationary with respect to $\TTT$ can all be coded by subsets of $\VV$ of cardinality $\kappa$  in $\VV[G,H]$, the fact that $\PPP$ satisfies the $\kappa^+$-chain condition in $\VV[G]$   implies that there is an $\eta < \kappa^+$ such that $E,\TTT \in\VV[G,H_\eta]$ and $E$ is non-stationary with respect to $\TTT$ in $\VV[G,H_\eta]$.  
  
  Let $K$ be $\RRR$-generic   over $\VV[G,H]$ with $r \in K$.   Since the partial order $\SSS * (\dot{\PPP}_\eta \times \dot{\RRR})$ has a dense   $\kappa$-directed closed subset of size $\kappa$ in $\VV$, our assumptions imply that the tree property holds at   $\kappa$ in $V[G,H_\eta,K]$. However, $E$ remains stationary in $V[G,H,K]$ and thus, \emph{a fortiori},  in $\VV[G,H_\eta,K]$. Moreover, the maps witnessing that $E$ is non-stationary with respect to $\TTT$ obviously   persist in $V[G,H_\eta,K]$, so $E$ remains non-stationary with respect to $\TTT$ in $V[G,H_\eta,K]$, and so,   by Fact \ref{non_stationary_fact}, $\TTT$ is a $\kappa$-Aronszajn tree in $V[G,H_\eta,K]$, contradicting   the fact that the tree property holds at $\kappa$.
\end{proof}


\section{Provable implications}\label{section:ProvableImplications}

In this section, we piece things together to provide a complete explanation of Table \ref{table:Implications_Special_Ascent_Succ_Reg} from the end of the 
Introduction, thus completing the picture of the interaction between special trees and trees with ascent paths 
at successors of regular cardinals. Throughout this section, we will work under the assumption that there are 
$\aleph_2$-Aronszajn trees.


\subsection{Inconsistencies}

We first note that, by Lemma \ref{lucke_lemma}, an $\aleph_2$-Aronszajn tree with an $\aleph_0$-ascent 
path cannot be special. This immediately implies that $\forall \TTT.S(\TTT)$ is incompatible with 
$\exists \TTT.A(\TTT)$ and that $\forall \TTT.A(\TTT)$ is incompatible with $\exists \TTT.S(\TTT)$, 
so the three boxes in the upper left of the table are inconsistent.


\subsection{Lower bounds}

We now deal with lower bounds for the boxes in the bottom row and right column of the table. First, by
Theorem \ref{square_ascent_path_thm}, $\square(\aleph_2)$ implies the existence of an $\aleph_2$-Aronszajn 
tree with an $\aleph_0$-ascent path. By results of Jensen and Todor\v{c}evi\'{c} (see {\cite[Section 6]{jensen_fine_structure}} and {\cite[(1.10)]{MR908147}}), a failure of $\square(\aleph_2)$ implies that $\aleph_2$ is weakly compact in $L$. Therefore, the consistency of $\forall \TTT.\neg A(\TTT)$ implies the consistency of a weakly compact cardinal, which takes care of 
all three boxes in the right column of the table.

By Lemma \ref{lemma:AllAscentSquareFails}, the assumption that $\forall \TTT.A(\TTT)$ holds implies a failure of $\square(\kappa)$ and, as above, we can conclude that the consistency of $\forall \TTT.A(\TTT)$ implies the consistency of a weakly compact cardinal.

Finally, Jensen's principle $\square_{\aleph_1}$ implies the existence of a special $\aleph_2$-Aronszajn tree and, by a result of Jensen in \cite{jensen_fine_structure}, the failure of $\square_{\aleph_1}$ implies that $\aleph_2$ is Mahlo in $L$. In particular, the consistency of $\forall \TTT.\neg S(\TTT)$ implies the consistency of a Mahlo cardinal, thus finishing our derivation of lower bounds.


\subsection{Upper bounds}

We finally deal with upper bounds. First, Theorem \ref{all_ascent_path_thm} shows that the 
consistency of $\forall \TTT.A(\TTT)$ follows from the consistency 
of a weakly compact cardinal. Since $\forall \TTT.A(\TTT)$ implies $\forall \TTT. \neg S(\TTT)$, 
it follows that the consistency of the conjunction of these two statements follows from the consistency 
of a weakly compact cardinal.

Next, in \cite{MR603771}, Laver and Shelah prove that the consistency of a weakly compact cardinal 
implies the consistency of Souslin's Hypothesis at $\aleph_2$. A straightforward and well-known strengthening of 
their argument yields that the consistency of a weakly compact cardinal in fact implies the consistency of 
$\forall \TTT.S(\TTT)$. Since $\forall \TTT.S(\TTT)$ implies $\forall \TTT. \neg A(\TTT)$, it follows 
that the consistency of the conjunction of these two statements follows from the consistency of a 
weakly compact cardinal. Moreover, it immediately follows that the conjunction of $\exists \TTT.S(\TTT)$ and 
$\forall \TTT. \neg A(\TTT)$ follows from the consistency of a weakly compact cardinal.

Since $\square_{\aleph_1}$ implies the existence both of a special $\aleph_2$-Aronszajn tree and an $\aleph_2$-Aronszajn 
tree with an $\aleph_0$-ascent path, it follows that the consistency of the conjunction of $\exists \TTT.S(\TTT)$ and 
$\exists \TTT.A(\TTT)$ does not require large cardinals.

Suppose $\kappa$ is the least Mahlo cardinal in $\LL$, and force over $\LL$ with the partial order constructed by Mitchell in \cite{MR0313057} for $\kappa$. Then there are no special $\aleph_2$-Aronszajn trees in the extension. Since $\kappa$ is not weakly compact in $\LL$, $\square(\aleph_2)$ holds in the extension, so, by Corollary \ref{square_ascent_path_cor}, there is an $\aleph_2$-Aronszajn 
tree with an $\aleph_0$-ascent path. It follows that the consistency of the conjunction of $\forall \TTT.\neg S(\TTT)$ and $\exists \TTT.A(\TTT)$ follows from the consistency of a Mahlo cardinal.

Finally, Theorem \ref{no_special_no_ascent_thm} shows that the consistency of the conjunction of $\forall \TTT.\neg S(\TTT)$ 
and $\forall \TTT.\neg A(\TTT)$ follows from the consistency of a weakly compact cardinal, thus completing the table.


\section{Chain conditions}\label{section:ChainConditions}

The first part of this section is devoted to the proofs of Theorem \ref{square_productivity_thm} and \ref{KnasterLayered_thm}. In the second part, we will use ideas from Section \ref{ConsResultsTrees} to construct a model of set theory in which the class of all partial orders satisfying the $\kappa$-chain condition exhibits an interesting product behavior.

\begin{definition}
  Suppose that $\vec{\calC} = \seq{C_{\alpha,i}}{\alpha < \kappa,   ~ i(\alpha) \leq i < \lambda}$ is a $\square^{\mathrm{ind}}(\kappa, \lambda)$-sequence, and $i < \lambda$.  
  \begin{enumerate}
    \item We define $S^{\vec{\calC}}_i = \Set{\alpha \in\acc(\kappa)}{i(\alpha) = i}$ and $S^{\vec{\calC}}_{{\leq}i}  = \Set{\alpha \in\acc(\kappa)}{i(\alpha) \leq i}$. The sets $S^{\vec{\calC}}_{{<} i}$, $S^{\vec{\calC}}_{{>} i}$, etc. are defined analogously. 
    
    \item We let $\TTT_i^{\vec{\calC}}$ denote the tree with underlying set $S^{\vec{\calC}}_{{\leq} i}$ and $$\alpha <_{\TTT_i^{\vec{\calC}}} \beta ~ \Longleftrightarrow ~ \alpha \in \acc(C_{\beta, i})$$ for all  $\alpha, \beta \in S^{\vec{\calC}}_{{\leq} i}$. 
  \end{enumerate}
\end{definition}

\begin{lemma}\label{lemma:TreeIndexedSquareSpecialSubset}
  Let $\vec{\calC}=\langle C_{\alpha, i} \mid \alpha < \kappa, ~ i(\alpha) \leq i < \lambda \rangle$ 
  be a $\square^{\mathrm{ind}}(\kappa,\lambda)$-sequence and let $i<\lambda$. If the tree $\TTT_i^{\vec{\calC}}$ has height $\kappa$, then the set $S^{\vec{\calC}}_{{>}i}$ is non-stationary with respect to $\TTT_i^{\vec{\calC}}$.  
\end{lemma}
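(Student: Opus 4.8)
The plan is to verify Definition \ref{definition:SpacialTree} directly, by producing a regressive map $\map{r}{\TTT_i^{\vec{\calC}}\restriction S^{\vec{\calC}}_{{>}i}}{\TTT_i^{\vec{\calC}}}$ each of whose fibres $r^{{-}1}``\{t\}$ has the property that the $\TTT_i^{\vec{\calC}}$-heights of its members are bounded strictly below $\kappa$. Given such an $r$, the height function itself then serves as the required colouring $c_t$: it is strictly increasing, hence injective, along $<_{\TTT_i^{\vec{\calC}}}$-chains, and the bound on the fibre lets us take $\theta_t<\kappa$.

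First I would unwind the structure of $\TTT_i^{\vec{\calC}}$ from clause (4) of Definition \ref{ind_square_def}: for $\beta\in S^{\vec{\calC}}_{{\leq}i}$ one has $\pred_{\TTT_i^{\vec{\calC}}}(\beta)=\acc(C_{\beta,i})$ with the ordinal ordering, so $\height{\beta}{\TTT_i^{\vec{\calC}}}=\otp(\acc(C_{\beta,i}))$, and $t<_{\TTT_i^{\vec{\calC}}}\beta$ forces $C_{t,i}=C_{\beta,i}\cap t$. Now suppose $\beta$ is a node with $\eta_\beta:=\height{\beta}{\TTT_i^{\vec{\calC}}}\in S^{\vec{\calC}}_{{>}i}$, i.e.\ $i(\eta_\beta)>i$. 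The key remarks are: (a) $\eta_\beta\notin\acc(C_{\beta,i})$, since otherwise clause (4) would give $i\geq i(\eta_\beta)$; (b) $\eta_\beta<\beta$, since otherwise $i(\eta_\beta)=i(\beta)\leq i$; and (c) $\acc(C_{\beta,i})\not\subseteq\eta_\beta$, since otherwise this closed set of order type $\eta_\beta$ would be cofinal in the limit ordinal $\eta_\beta$, whence (using (b) and that $C_{\beta,i}$ is closed below $\beta$) $\eta_\beta$ would be an accumulation point of $C_{\beta,i}$, contradicting (a). By (c) I may define
\[
 r(\beta) ~ := ~ \min\big(\acc(C_{\beta,i})\setminus\eta_\beta\big),
\]
which is a $<_{\TTT_i^{\vec{\calC}}}$-predecessor of $\beta$, so $r$ is regressive, and which by (a) is an ordinal strictly greater than $\eta_\beta$. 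Consequently, if $r(\beta)=t$ then $\height{\beta}{\TTT_i^{\vec{\calC}}}=\eta_\beta<t$, and $t<\kappa$ since $t\in\TTT_i^{\vec{\calC}}\subseteq\acc(\kappa)$; thus every member of $r^{{-}1}``\{t\}$ has $\TTT_i^{\vec{\calC}}$-height below the ordinal $t$, so taking $\theta_t:=t$ and $c_t:=\height{\cdot}{\TTT_i^{\vec{\calC}}}\restriction r^{{-}1}``\{t\}$ completes the verification. (The hypothesis that $\TTT_i^{\vec{\calC}}$ has height $\kappa$ is used only to ensure that non-stationarity with respect to $\TTT_i^{\vec{\calC}}$ is a defined notion.)

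I expect the main obstacle to be the choice of the regressive map with bounded fibres. The naive approach of reflecting $\eta_\beta$ to a canonically chosen ordinal below it — for instance via a club subset of $\eta_\beta$, or via the order type of $S^{\vec{\calC}}_{{>}i}\cap\eta_\beta$ — does yield a regressive map, but one whose fibres can be cofinal in $\kappa$; indeed, for indexed squares built as in Theorem \ref{square_building_thm} the set $S^{\vec{\calC}}_{{>}i}\cap\gamma$ contains a club in $\gamma$ for many $\gamma\in S^{\vec{\calC}}_{{>}i}$, so that order-type reflection need not be regressive there. The way around this, as above, is to reflect not inside the ordinal $\eta_\beta$ but inside the set $\acc(C_{\beta,i})$: since $\eta_\beta$ is only the order type of $\acc(C_{\beta,i})$, the first accumulation point of $C_{\beta,i}$ lying at or above $\eta_\beta$ is automatically an ordinal $>\eta_\beta$, and that gap is exactly what bounds the fibres.
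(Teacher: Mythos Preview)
Your argument is correct, and it is in fact cleaner than the paper's own proof. Both proofs rest on the same three observations about a node $\beta$ with $\eta_\beta:=\height{\beta}{\TTT_i^{\vec{\calC}}}\in S^{\vec{\calC}}_{{>}i}$: that $\eta_\beta\notin\acc(C_{\beta,i})$, that $\eta_\beta<\beta$, and hence that $\acc(C_{\beta,i})\setminus\eta_\beta\neq\emptyset$. From here, however, the paper goes \emph{downwards}: it sets $s(\beta)=\sup(\acc(C_{\beta,i})\cap\eta_\beta)$ (when nonempty), the last tree-predecessor of $\beta$ lying below $\eta_\beta$. This map is regressive but its fibres need not have bounded height, so the paper adds a second layer --- passing to the $<_\TTT$-minimal elements of each fibre $s^{-1}\{\gamma\}$ and redirecting non-minimal elements there --- followed by a four-case definition of $r$ and a case-by-case verification that each fibre of $r$ has height bounded by its target. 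Your choice to go \emph{upwards}, taking $r(\beta)=\min(\acc(C_{\beta,i})\setminus\eta_\beta)$, avoids all of this: observation (a) forces $r(\beta)>\eta_\beta$ as ordinals, which immediately bounds the heights in $r^{-1}\{t\}$ by $t$ itself, so the height function serves as the colouring $c_t$ without further work. The paper's route yields no extra information; yours is simply the shorter path to the same conclusion.
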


\begin{proof}
  Set $\TTT=\TTT_i^{\vec{\calC}}$ and $S = S^{\vec{\calC}}_{>i}$. 
  Let $A$ denote the set of all $\alpha\in\TTT\restriction S$ with $\acc(C_{\alpha,i})\cap\height{\alpha}{\TTT}\neq\emptyset$. 
  Given $\alpha\in A$, define $s(\alpha)=\sup(\acc(C_{\alpha,i})\cap\height{\alpha}{\TTT})$. 
  Since $\alpha\geq\height{\alpha}{\TTT}\in S$, we have $\alpha\notin S$ and therefore $\alpha>\height{\alpha}{\TTT}$ holds for all $\alpha\in A$. This shows that $s(\alpha)\leq\height{\alpha}{\TTT}<\alpha$, $s(\alpha)\in\acc(C_{\alpha,i})$, $i(s(\alpha))\leq i$ and $s(\alpha)\in\TTT$. Moreover, we have $\acc(C_{\alpha,i})\cap S=\emptyset$ and  hence $s(\alpha)<\height{\alpha}{\TTT}$ for all $\alpha\in A$.

 Given $\beta\in\TTT$, define $A_\beta$ to be the set of all $<_\TTT$-minimal elements $\gamma$ in $s^{{-}1}``\{\beta\}$. Let $B$ denote the set of all $\alpha\in A$ with $\alpha\notin A_{s(\alpha)}$, and let $\map{r}{\TTT\restriction S}{\TTT}$ denote the unique function with the following properties: 
 \begin{enumerate}
  \item If $\alpha\in B$, then $r(\alpha)$ is the unique element $\gamma$ of $A_{s(\alpha)}$ with $\gamma<_\TTT \alpha$. 
  
  \item If $\alpha\in A\setminus B$, then $r(\alpha)=s(\alpha)$. 
  
  \item If $\alpha\in(\TTT\restriction S)\setminus A$ is not minimal in $\TTT$, then we define $r(\alpha)=\min(\acc(C_{\alpha,i}))$. 
  
  \item If $\alpha\in\TTT\restriction S$ is minimal in $\TTT$, then we define $r(\alpha)=\alpha$. 
\end{enumerate}

 By the above remarks, the function $r$ is regressive on $\TTT\restriction S$. Fix $\gamma\in\TTT$ and let $\map{c_\gamma}{r^{{-}1}``\{\gamma\}}{\omega\times\kappa}$ denote the unique function with the following properties: 
 \begin{enumerate}
  \item[(a)] If $\alpha\in B$, then $c_\gamma(\alpha)=\langle 0,\height{\alpha}{\TTT}\rangle$. 
  
  \item[(b)] If $\alpha\in A\setminus B$, then $c_\gamma(\alpha)=\langle 1,0\rangle$. 
  
  \item[(c)] If $\alpha\in(\TTT\restriction S)\setminus A$ is not minimal in $\TTT$, then $c_\gamma(\alpha)=\langle 2,\height{\alpha}{\TTT}\rangle$. 
  
  \item[(d)] If $\alpha\in\TTT\restriction S$ is minimal in $\TTT$, then $c_\gamma(\alpha)=\langle 3,0\rangle$. 
 \end{enumerate}
 
 Then $c_\gamma$ is injective on $<_\TTT$-chains in $r^{{-}1}``\{\gamma\}$. 
 If $\alpha\in\dom(c_\gamma)\setminus A$ is not minimal in $\TTT$, then we have  $\acc(C_{\alpha,i})\neq\emptyset$,  $\acc(C_{\alpha,i})\cap\height{\alpha}{\TTT}=\emptyset$ and therefore $$\height{\alpha}{\TTT} ~ \leq ~  \min(\acc(C_{\alpha,i})) ~ = ~   r(\alpha) ~  = ~  \gamma.$$
  Next, pick $\alpha\in B\cap\dom(c_\gamma)$. Then we have $\height{\alpha}{\TTT}<\alpha$, $\gamma\in A_{s(\alpha)}\subseteq A\subseteq\TTT\restriction S$ and $\gamma<_\TTT \alpha$. This implies that $\height{\gamma}{\TTT}<\min\{\height{\alpha}{\TTT},\gamma\}$,  $C_{\gamma,i}=C_{\alpha,i}\cap\gamma$ and $$\max(\acc(C_{\alpha,i})\cap\height{\alpha}{\TTT}) ~ = ~ \max(\acc(C_{\gamma,i})\cap\height{\gamma}{\TTT}) ~ = ~ \max(\acc(C_{\alpha,i})\cap\height{\gamma}{\TTT}).$$ In particular, we have $\acc(C_{\alpha,i})\cap[\height{\gamma}{\TTT},\height{\alpha}{\TTT})=\emptyset$ and therefore $\height{\alpha}{\TTT}\leq\gamma$, because otherwise we would have $\gamma\in\acc(C_{\alpha,i})\cap[\height{\gamma}{\TTT},\height{\alpha}{\TTT})$. These computations show that the range of $c_\gamma$ has cardinality strictly less than $\kappa$. 
\end{proof}

The following type of partial order will be crucial in our construction of Knaster partial orders with interesting product behavior.

\begin{definition}\label{definition:SpecializiationForcing}
  Given a tree $\TTT$, we let $\PPP(\TTT)$ denote the partial order consisting of finite partial functions $\pmap{f}{\TTT}{\omega}{part}$ that are injective on $<_{\TTT}$-chains and that are ordered by reverse inclusion. 
\end{definition}

Remember that a tree $\TTT$ is \emph{extensional at limit levels} if $\pred_\TTT(s)\neq\pred_\TTT(t)$ holds for every limit ordinal $\alpha$ and all $s,t\in\TTT(\alpha)$ with $s\neq t$.

\begin{lemma} \label{knaster_lemma}
 Let $\kappa$ be an uncountable regular cardinal, let $\mu<\kappa$ be a (possibly finite) cardinal with $\nu^\mu<\kappa$ for all $\nu<\mu$, let $S$ be a subset of $E^\kappa_{{>}\mu}$ that is stationary in $\kappa$, and let $\seq{\TTT^\gamma}{\gamma<\mu}$ be a sequence of trees of height at most $\kappa$ that are extensional at limit levels. Assume that $S$ is non-stationary with respect to $\TTT^\gamma$ for every $\gamma<\lambda$ with $\height{\TTT^\gamma}{}=\kappa$. Then the full support product $\prod_{\gamma<\mu}\PPP(\TTT^\gamma)$ is $\kappa$-Knaster.  
\end{lemma}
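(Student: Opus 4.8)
### Proof plan for Lemma \ref{knaster_lemma}

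The plan is to prove the $\kappa$-Knaster property directly: given a family $\Set{p_\xi}{\xi<\kappa}$ of conditions in the full support product $\PPP=\prod_{\gamma<\mu}\PPP(\TTT^\gamma)$, I would produce a subfamily of size $\kappa$ consisting of pairwise compatible conditions. First I would perform a $\Delta$-system style normalization: each condition $p_\xi$ is a sequence $\seq{p_\xi(\gamma)}{\gamma<\mu}$ of finite partial functions $\pmap{f}{\TTT^\gamma}{\omega}{part}$, and $\bigcup_{\gamma<\mu}\dom(p_\xi(\gamma))$ is a subset of $\bigcup_{\gamma<\mu}\TTT^\gamma$ of size less than $\aleph_0\cdot\mu^+$. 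Using the hypothesis that $\nu^\mu<\kappa$ for all $\nu<\kappa$ (together with the regularity of $\kappa$), I would first thin out to a set $X\in[\kappa]^\kappa$ on which the ``shape'' of $p_\xi$ is constant: the index set $\{\gamma : p_\xi(\gamma)\neq\emptyset\}$, the cardinalities $|\dom(p_\xi(\gamma))|$, and, after fixing an enumeration of each $\dom(p_\xi(\gamma))$ in increasing tree-height order, the heights $\height{t}{\TTT^\gamma}$ of the nodes and the values $p_\xi(\gamma)(t)$. I would also arrange (by a further pressing-down / thinning argument) that the nodes of $p_\xi(\gamma)$ lying below some fixed bound form a common ``root'' shared by all $\xi\in X$, while the remaining nodes have heights that are strictly increasing as $\xi$ ranges over $X$ in increasing order — i.e. for $\xi<\xi'$ in $X$, every ``non-root'' node of $p_\xi(\gamma)$ has $\TTT^\gamma$-height strictly below every ``non-root'' node of $p_{\xi'}(\gamma)$.

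The heart of the argument is then the following: I claim that for a stationary (hence size $\kappa$) set of pairs $\xi<\xi'$ in $X$, the conditions $p_\xi$ and $p_{\xi'}$ are compatible, i.e. $p_\xi(\gamma)\cup p_{\xi'}(\gamma)$ is injective on $<_{\TTT^\gamma}$-chains for every $\gamma<\mu$. Since $p_\xi(\gamma)$ and $p_{\xi'}(\gamma)$ have identical value-pattern, a clash can only occur if two nodes $s\in\dom(p_\xi(\gamma))$ and $t\in\dom(p_{\xi'}(\gamma))$ with $p_\xi(\gamma)(s)=p_{\xi'}(\gamma)(t)$ are $<_{\TTT^\gamma}$-comparable. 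By the root/tail arrangement, $s$ must be a non-root node of $p_\xi(\gamma)$, $t$ a non-root node of $p_{\xi'}(\gamma)$, so $\height{s}{\TTT^\gamma}<\height{t}{\TTT^\gamma}$, forcing $s<_{\TTT^\gamma}t$. So I must rule out that a fixed non-root node $t$ of $p_{\xi'}(\gamma)$ sits above the corresponding non-root node $s=s(\xi)$ of $p_\xi(\gamma)$ for a stationary-in-$X$ set of $\xi$. This is where the non-specialness hypothesis enters, via Lemma \ref{lucke_lemma}: if $\height{\TTT^\gamma}{}=\kappa$, the hypothesis says $S\subseteq E^\kappa_{{>}\mu}$ is non-stationary with respect to $\TTT^\gamma$, and — since $\mu<\kappa$ and $\kappa$ is not the successor of a cardinal of cofinality $\le\mu$ when... — actually the relevant consequence I would extract is that no node of $\TTT^\gamma$ has $\kappa$-many predecessors whose heights accumulate to a point of $S$; more precisely, if for a stationary set of $\xi$ the node $s(\xi)$ were $<_{\TTT^\gamma}t$, then the set $\Set{s(\xi)}{\xi}$ would be a $<_{\TTT^\gamma}$-chain of order type $\kappa$ below $t$, but $t$ has fewer than $\kappa$ predecessors, a contradiction — I would use this simpler counting fact rather than invoking non-specialness at this point.

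Reconsidering: the cleaner route for the core clash-avoidance is to fix, for each $\gamma$ with $\height{\TTT^\gamma}{}=\kappa$ and each pair of ``slots'' $(k,l)$ in the enumerations, and ask for which $\xi<\xi'$ the $k$-th node of $p_\xi(\gamma)$ is $<_{\TTT^\gamma}$-below the $l$-th node of $p_{\xi'}(\gamma)$. For a fixed $\xi'$, the set of such ``bad'' $\xi$ lies below the $l$-th node $t$ of $p_{\xi'}(\gamma)$, hence is a chain of size $<\kappa$; but I need this uniformly, so I would instead use a Fodor-type argument: define, via a regressive function on a club derived from the non-specialness witness $r$ and the colorings $c_t$ of Definition \ref{definition:SpacialTree}, a colour for each $\xi'\in X$ recording the $r$-image and the $c$-colour of each relevant node, note the colour lives in a set of size $<\kappa$ by non-specialness, and thin $X$ to a homogeneous set $Y$; on $Y$ two such nodes from different conditions cannot be comparable since they would receive the same $c_t$-value while lying on a common $<_{\TTT^\gamma}$-chain, contradicting injectivity of $c_t$. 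The hypothesis $S\subseteq E^\kappa_{{>}\mu}$ combined with $\mu<\kappa$ and extensionality at limit levels ensures the heights of these nodes can be pushed into $S$ (by replacing each node with an appropriate predecessor whose height has cofinality $>\mu$, where we need $\height{\TTT^\gamma}{}=\kappa$ and regularity), so that the regressive $r$ applies. I expect the main obstacle to be exactly this interface: organizing the $\Delta$-system/thinning so that the nodes being compared genuinely fall on tree levels in $S$ where the non-specialness witness $r$ and colourings $c_t$ are defined and behave injectively on chains, and handling uniformly the finitely-many trees of height $<\kappa$ (where no clashes can arise once the tails are separated, since a chain of length $\kappa$ is impossible) versus those of height exactly $\kappa$. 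Once homogeneity on $Y\in[\kappa]^\kappa$ is achieved, pairwise compatibility of $\Set{p_\xi}{\xi\in Y}$ follows as sketched, giving the $\kappa$-Knaster property.
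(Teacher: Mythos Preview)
Your overall strategy matches the paper's: normalize the conditions, apply Fodor using the regressive maps $r_\gamma$ and colourings $c^\gamma_t$ that witness non-stationarity of $S$ with respect to each $\TTT^\gamma$ of height $\kappa$, and deduce that nodes receiving matching $(r,c)$-data are $<_{\TTT^\gamma}$-incomparable. You also correctly locate the crux as arranging that the nodes to which $r_\gamma$ is applied lie at levels in $S$.

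The genuine gap is in your normalization. After a $\Delta$-system on the domains, a ``tail'' node of $p_\xi(\gamma)$ need not have large height---nothing prevents distinct conditions from containing distinct nodes at some fixed small level---and your first thinning step (making ``the heights of the nodes'' constant) cannot literally be carried out for nodes whose heights are unbounded in $\kappa$. Your proposed fix (replace each node by a predecessor at a level in $S$) only works for tail nodes whose height exceeds the index $\alpha\in C\cap S$ attached to the condition; a tail node of height $<\alpha$ has no level-$\alpha$ predecessor, and $r_\gamma$ does not apply to it. The paper closes this gap by first extending each $p_\alpha(\gamma)$ to a condition $q_\alpha(\gamma)$ in the dense set $\DDD_\gamma$ of \emph{level-closed} conditions: whenever $t,u\in\dom(q)$ with $\height{t}{\TTT^\gamma}<\height{u}{\TTT^\gamma}$, some node of $\dom(q)$ at height $\height{t}{\TTT^\gamma}$ lies below $u$. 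After stabilizing the set $H$ of heights below $\alpha$ occurring in the domains, level-closure forces any low-height tail node $t$ of $q_\alpha(\gamma)$ that sits $<_{\TTT^\gamma}$-below some $u\in\dom(q_\beta(\gamma))$ to already lie in $\dom(q_\beta(\gamma))$ (as the unique predecessor of $u$ at height $\height{t}{\TTT^\gamma}\in H$), contradicting disjointness of tails. Thus every remaining clash involves nodes of height $\geq\alpha$, their level-$\alpha$ predecessors lie in $\TTT^\gamma\restriction S$, and the $(r,c)$-argument you sketched goes through. The paper also records, via injections $\iota_{\alpha,\gamma}$ into a single regressed level $\rho(\alpha)<\alpha$, enough information to match enumeration slots between different conditions before invoking injectivity of $c^\gamma_t$ on chains; this is where extensionality at limit levels and $\cof(\alpha)>\mu$ are actually used.
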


\begin{proof}
 Since two conditions in a partial order of the form $\PPP(\TTT)$ are compatible if and only if their union is a condition, it suffices to prove the statement for trees of cardinality $\kappa$, because we can always consider trees of this form that are given by the downward closures of the unions of the domains of $\kappa$-sequences of conditions. Note that every such tree is isomorphic to a tree $\TTT$ with the property that the underlying set of $\TTT$ is a subset of $\kappa\times\kappa$, we have $\TTT(\alpha)\subseteq\{\alpha\}\times\kappa$ for all $\alpha<\kappa$ and $\langle\alpha_0,\beta_0\rangle <_\TTT\langle\alpha_1,\beta_1\rangle$ implies $\beta_0<\beta_1$ for all nodes $\langle\alpha_0,\beta_0\rangle$ and $\langle\alpha_1,\beta_1\rangle$ in  $\TTT$. 

 Assume that all trees in the above sequence are of this form and fix a sequence $\seq{p_\alpha}{\alpha<\kappa}$ of conditions in $\prod_{\gamma<\mu}\PPP(\TTT^\gamma)$. If $\gamma<\mu$ and $\height{\TTT^\gamma}{}=\kappa$, then we also fix functions $\map{r_\gamma}{\TTT^\gamma\restriction S}{\TTT^\gamma}$ and $\seq{\map{c^\gamma_t}{r_\gamma^{{-}1}``\{t\}}{\kappa^\gamma_t}}{t\in\TTT^\gamma}$ witnessing the non-stationarity of $S$ with respect to $\TTT^\gamma$. In the other case, if $\gamma<\mu$ and $\height{\TTT^\gamma}{}<\kappa$, then we let $\map{r_\gamma}{\TTT^\gamma\restriction S}{\TTT^\gamma}$ denote the unique regressive function with $\ran{r_\gamma}\subseteq\TTT^\gamma(0)$ and we set $c^\gamma_{r_\gamma(t)}(t)=\height{t}{\TTT^\gamma}$ for all $t\in\TTT^\gamma\restriction S$. 
   Now, define $\DDD_\gamma$ to be the set of all conditions $p$ in $\PPP(\TTT^\gamma)$ with the property that for all $t,u\in\dom(p)$ with $\height{t}{\TTT^\gamma}<\height{u}{\TTT^\gamma}$, there is $s\in\dom(p)$ with $\height{s}{\TTT^\gamma}=\height{t}{\TTT^\gamma}$ and $s<_{\TTT^\gamma} u$. Then it is easy to see that $\DDD_\gamma$ is a dense subset of $\PPP(\TTT^\gamma)$ and we can pick a sequence $\seq{q_\alpha}{\alpha<\kappa}$ of conditions in $\prod_{\gamma<\mu}\PPP(\TTT^\gamma)$ with $q_\alpha(\gamma)\leq_{\PPP(\TTT^\gamma)}p_\alpha(\gamma)$ and $q_\alpha(\gamma)\in\DDD_\gamma$ for all $\alpha<\kappa$ and $\gamma<\mu$. 
  By our assumptions on the trees $\TTT^\gamma$, there is a club $C$ of limit ordinals in $\kappa$ with the property that $\dom(q_\alpha(\gamma))\subseteq\beta\times\beta$ holds for all $\alpha,\beta\in C$ with $\alpha<\beta$ and all $\gamma<\mu$. 
    For all $\alpha\in C$ and $\gamma<\mu$, fix an injective enumeration $\seq{t^{\alpha,\gamma}_k}{k<n_{\alpha,\gamma}}$ of the finite set $\Set{t\in\TTT^\gamma(\alpha)}{\exists u\in\dom(q_\alpha(\gamma)) ~ t\leq_{\TTT^\gamma} u}$. Since the trees $\TTT^\gamma$ are extensional at limit levels and $S$ is a subset of $E^\kappa_{{>}\mu}$, there is a regressive function $\map{\rho}{C\cap S}{\kappa}$ and a matrix $$\seq{\map{\iota_{\alpha,\gamma}}{n_{\alpha,\gamma}}{\TTT^\gamma(\rho(\alpha))}}{\alpha\in C\cap S, ~ \gamma<\mu}$$ of injections with $\dom(q_\alpha(\gamma))\cap\TTT^\gamma_{{<}\alpha}\subseteq\TTT^\gamma_{{<}\rho(\alpha)}$, $\dom(q_\alpha(\gamma))\cap(\alpha\times\alpha)\subseteq \rho(\alpha)\times\rho(\alpha)$ and $r_\gamma(t^{\alpha,\gamma}_k)<_{\TTT^\gamma} \iota_{\alpha,\gamma}(k) <_{\TTT^\gamma} t^{\alpha,\gamma}_k$ for all $\alpha\in C\cap S$, $\gamma<\mu$ and $k<n_\alpha$. 
    
 In this situation, the assumption that $\nu^\mu<\kappa$ holds for all $\nu<\kappa$ yields   
 a stationary subset $E$ of $C\cap S$, 
 an ordinal $\xi<\kappa$, 
 a sequence $\seq{n_\gamma}{\gamma<\mu}$ of natural numbers, 
 a subset $K\subseteq\mu\times\omega$, 
 a subset $H\subseteq \mu\times\xi$ and 
 a sequence $\seq{D_\gamma}{\gamma<\mu}$ of finite subsets of $\kappa\times\kappa$
 such that the following statements hold  for all $\alpha,\beta\in E$ and $\langle\gamma,k\rangle\in K$: 
 \begin{enumerate}
  \item $n_\gamma=n_{\alpha,\gamma}$ and $\rho(\alpha)=\xi$. 
  
  \item $K=\Set{\langle \gamma,k\rangle}{\gamma<\mu, ~ k<n_\gamma, ~ \iota_{\alpha,\gamma}(k)\in\alpha\times\alpha}$. 
  
  \item $H=\Set{\langle\gamma,\height{t}{\TTT^\gamma}\rangle}{\gamma<\mu, ~ t\in\dom(q_\alpha(\gamma))\cap\TTT^\gamma_{{<}\alpha}}$. 
  
  \item $D_\gamma=\dom(q_\alpha(\gamma))\cap(\alpha\times\alpha)$ and $q_\alpha(\gamma)\restriction D_\gamma= q_\beta(\gamma)\restriction D_\gamma$. 
  
  \item $\iota_{\alpha,\gamma}(k)=\iota_{\beta,\gamma}(k)$, $r_\gamma(t^{\alpha,\gamma}_k)=r_\gamma(t^{\beta,\gamma}_k)$ and $c^\gamma_{r_\gamma(t^{\alpha,\gamma}_k)}(t^{\alpha,\gamma}_k)=c^\gamma_{r_\gamma(t^{\beta,\gamma}_k)}(t^{\beta,\gamma}_k)$. 
 \end{enumerate} 

 Now, pick $\alpha,\beta\in E$ with $\alpha<\beta$ and assume for a contradiction that the conditions $q_\alpha$ and $q_\beta$ are incompatible in $\prod_{\gamma<\mu}\PPP(\TTT^\gamma)$. 
 Then there is a $\gamma<\mu$ such that $q_\alpha(\gamma)\cup q_\beta(\gamma)$ is not a condition in $\PPP(\TTT^\gamma)$ and hence there are $<_{\TTT^\gamma}$-comparable nodes $t,u\in\TTT^\gamma$ such that $t\in\dom(q_\alpha(\gamma))\setminus\dom(q_\beta(\gamma))$, $u\in\dom(q_\beta(\gamma))\setminus\dom(q_\alpha(\gamma))$ and $q_\alpha(\gamma)(t)=q_\beta(\gamma)(u)$. 
 But then $t<_{\TTT^\gamma}u$, because otherwise $u<_{\TTT^\gamma} t\in\beta\times\beta$ would imply that $u\in\dom(q_\beta(\gamma))\cap(\beta\times\beta)=D_\gamma\subseteq\dom(q_\alpha(\gamma))$. 
 Next, assume that $\height{t}{\TTT^\gamma}<\alpha$. 
 Then $\langle\gamma,\height{t}{\TTT^\gamma}\rangle\in H$ and there is a $t_0\in\dom(q_\beta(\gamma))$ with $\height{t_0}{\TTT^\gamma}=\height{t}{\TTT^\gamma}<\height{u}{\TTT^\gamma}$.  Since $q_\beta(\gamma)\in\DDD_\gamma$, we can find $s\in\dom(q_\beta(\gamma))$ with $\height{s}{\TTT^\gamma}=\height{t_0}{\TTT^\gamma}$ and $s<_{\TTT^\gamma} u$.  But then $s=t\in\dom(q_\beta(\gamma))$, a contradiction.  This shows that $\height{t}{\TTT^\gamma}\geq\alpha>\xi$ and hence there is a $k<n_\gamma$ with $t^{\alpha,\gamma}_k\leq_{\TTT^\gamma} t$.  But then $\height{u}{\TTT^\gamma}>\height{t}{\TTT^\gamma}>\xi=\rho(\beta)$ implies that $\height{u}{\TTT^\gamma}\geq\beta$ and hence there is an $l<n_\gamma$ with $t^{\beta,\gamma}_l\leq_{\TTT^\gamma} u$.  Since $t<_{\TTT^\gamma}u$ and $\height{\iota_{\alpha,\gamma}(k)}{\TTT^\gamma}=\xi=\height{\iota_{\beta,\gamma}(l)}{\TTT^\gamma}$, we know that $\iota_{\alpha,\gamma}(k)=\iota_{\beta,\gamma}(l)$ and therefore $\iota_{\beta,\gamma}(l)<_{\TTT^\gamma} t\in \beta\times\beta$ implies that $\langle\gamma,l\rangle\in K$. This shows that $\iota_{\alpha,\gamma}(k)=\iota_{\beta,\gamma}(l)=\iota_{\alpha,\gamma}(l)$ and the injectivity of $\iota_{\alpha,\gamma}$ implies that $k=l$ and $\langle\gamma,k\rangle\in K$. In this situation, the above choices ensure that $r_\gamma(t^{\alpha,\gamma}_k)=r_\gamma(t^{\beta,\gamma}_k)$ and $c^\gamma_{r_\gamma(t^{\alpha,\gamma}_k)}(t^{\alpha,\gamma}_k)=c^\gamma_{r_\gamma(t^{\alpha,\gamma}_k)}(t^{\beta,\gamma}_k)$. Since $t^{\alpha,\gamma}_k\neq t^{\beta,\gamma}_k$, this implies that $\height{\TTT^\gamma}{}=\kappa$ and hence the nodes $t^{\alpha,\gamma}_k$ and $t^{\beta,\kappa}_k$ are incompatible in $\TTT^\gamma$. But this yields a contradiction, because we have $t^{\alpha,\gamma}_k\leq_{\TTT^\gamma} t<_{\TTT^\gamma} u$ and $t^{\beta,\gamma}_k\leq_{\TTT^\gamma}u$. 

 The above computations show that the sequence $\seq{p_\alpha}{\alpha\in E}$ consists of pairwise compatible conditions in $\prod_{\gamma<\mu}\PPP(\TTT^\gamma)$. 
\end{proof}

We now introduce the $\kappa$-Knaster partial order that is used in the proofs of Theorem \ref{square_productivity_thm} and \ref{KnasterLayered_thm}.

\begin{definition}\label{definition:IndSquareKnasterPO}
 Suppose that $\vec{\calC}$ is a $\square^{\mathrm{ind}}(\kappa, \lambda)$-sequence. We let $\PPP_{\vec{\calC}}$ denote the lottery sum of the sequence $\seq{\PPP(\TTT^{\vec{\calC}}_i)}{i<\lambda}$ of partial orders, i.e., conditions in $\PPP_{\vec{\calC}}$ are pairs $\langle p,i\rangle$ with $i<\lambda$ and $p\in\PPP(\TTT^{\vec{\calC}}_i)$ and, given $\langle p,i\rangle,\langle q,j\rangle\in\PPP_{\vec{\calC}}$, we have $\langle p,i\rangle\leq_{\PPP_{\vec{\calC}}}\langle q,j\rangle$ if either $i=j$ and $p\leq_{\PPP(\TTT^{\vec{\calC}}_i)}q$ or $q=\mathbbm{1}_{\PPP(\TTT^{\vec{\calC}}_j)}$. 
\end{definition}

\begin{lemma} \label{lottery_sum_lemma} 
  Let $\vec{\calC}=\seq{C_{\alpha, i}}{\alpha < \kappa, ~ i(\alpha) \leq i < \lambda}$ 
 be a $\square^{\mathrm{ind}}(\kappa, \lambda)$-sequence with the property that the set $E^\kappa_{{\geq}\lambda}\cap S^{\vec{\calC}}_{{\geq}i}$ is stationary in $\kappa$ for all $i<\lambda$. 
  \begin{enumerate}
    \item If $\mu < \lambda$ is a cardinal with $\nu^\mu < \kappa$ for all $\nu < \kappa$, then the full support product  $\PPP_{\vec{\calC}}^\mu$ is $\kappa$-Knaster. 

    \item The full support product $\PPP_{\vec{\calC}}^\lambda$ does not satisfy the $\kappa$-chain condition. 
  \end{enumerate}
\end{lemma}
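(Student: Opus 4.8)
The plan is to establish (2) directly by exhibiting an explicit antichain, and to reduce (1) to Lemma \ref{knaster_lemma} via a pigeonhole argument on the lottery coordinates. For (2), recall that in the tree $\TTT^{\vec{\calC}}_i$ we have $\alpha <_{\TTT^{\vec{\calC}}_i} \beta$ exactly when $\alpha \in \acc(C_{\beta,i})$, and note that clauses (4) and (5) of Definition \ref{ind_square_def} together guarantee that for all $\alpha < \beta$ in $\acc(\kappa)$ there is some $i < \lambda$ with $\alpha \in \acc(C_{\beta,i})$, and that any such $i$ automatically satisfies $i \geq i(\beta)$ and $i \geq i(\alpha)$, so that $\alpha, \beta \in S^{\vec{\calC}}_{{\leq} i}$ and $\alpha <_{\TTT^{\vec{\calC}}_i} \beta$. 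Given this, for each $\alpha \in \acc(\kappa)$ I will let $q_\alpha$ be the condition in $\PPP_{\vec{\calC}}^\lambda$ with $q_\alpha(i) = \langle \mathbbm{1}_{\PPP(\TTT^{\vec{\calC}}_i)}, i\rangle$ for $i < i(\alpha)$ and $q_\alpha(i) = \langle \{\langle \alpha, 0\rangle\}, i\rangle$ for $i(\alpha) \leq i < \lambda$ (the latter makes sense since then $\alpha$ is a node of $\TTT^{\vec{\calC}}_i$). For $\alpha < \beta$ in $\acc(\kappa)$, choosing $i < \lambda$ with $\alpha \in \acc(C_{\beta,i})$, both $q_\alpha(i)$ and $q_\beta(i)$ are singleton conditions, and since $\alpha <_{\TTT^{\vec{\calC}}_i} \beta$ the union $\{\langle \alpha, 0\rangle, \langle \beta, 0\rangle\}$ labels a $<_{\TTT^{\vec{\calC}}_i}$-chain non-injectively; hence these conditions are incompatible in $\PPP(\TTT^{\vec{\calC}}_i)$, hence in $\PPP_{\vec{\calC}}$, so $q_\alpha \perp q_\beta$ in $\PPP_{\vec{\calC}}^\lambda$. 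Thus $\Set{q_\alpha}{\alpha \in \acc(\kappa)}$ is an antichain of size $\kappa$ and (2) follows.

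For (1), fix $\mu < \lambda$ with $\nu^\mu < \kappa$ for all $\nu < \kappa$; then in particular $\lambda^\mu < \kappa$. Given any sequence $\seq{p_\alpha}{\alpha < \kappa}$ of conditions in $\PPP_{\vec{\calC}}^\mu$, written $p_\alpha = \seq{\langle p^\gamma_\alpha, i^\gamma_\alpha\rangle}{\gamma < \mu}$, I will first use $\lambda^\mu < \kappa$ and the regularity of $\kappa$ to pass to a set $A \in [\kappa]^\kappa$ on which the lottery coordinates are constant, say $i^\gamma_\alpha = i^\gamma$ for all $\alpha \in A$ and $\gamma < \mu$. The key observation is that two conditions of $\PPP_{\vec{\calC}}$ sharing a second coordinate $i$ are compatible if and only if their first coordinates are compatible in $\PPP(\TTT^{\vec{\calC}}_i)$; consequently the (injective) map $p_\alpha \mapsto \seq{p^\gamma_\alpha}{\gamma < \mu}$ carries $\seq{p_\alpha}{\alpha \in A}$ into the full support product $\prod_{\gamma < \mu} \PPP(\TTT^{\vec{\calC}}_{i^\gamma})$ and preserves and reflects compatibility, so it suffices to show that this product is $\kappa$-Knaster, which I will derive from Lemma \ref{knaster_lemma} applied to the sequence of trees $\seq{\TTT^{\vec{\calC}}_{i^\gamma}}{\gamma < \mu}$.

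It then remains to check the hypotheses of Lemma \ref{knaster_lemma}. Each $\TTT^{\vec{\calC}}_i$ has height at most $\kappa$ and is extensional at limit levels: if $\alpha$ lies on a limit level of $\TTT^{\vec{\calC}}_i$, then $\acc(C_{\alpha,i}) = \pred_{\TTT^{\vec{\calC}}_i}(\alpha)$ has limit order type, hence no largest element, and since it is a subset of the club $C_{\alpha,i}$ of $\alpha$ this forces $\sup \acc(C_{\alpha,i}) = \alpha$ — otherwise the supremum would be an accumulation point of $C_{\alpha,i}$ strictly below $\alpha$, hence a largest element of $\acc(C_{\alpha,i})$ — so that $\alpha = \sup \pred_{\TTT^{\vec{\calC}}_i}(\alpha)$ is recovered from $\pred_{\TTT^{\vec{\calC}}_i}(\alpha)$. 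Next, let $i^{*} = \sup_{\gamma < \mu} i^\gamma$, which is $< \lambda$ since $\mu < \lambda = \cof(\lambda)$, and put $S = E^\kappa_{{\geq}\lambda} \cap S^{\vec{\calC}}_{{\geq} i^{*}+1}$. By the hypothesis of the lemma (applied with the value $i^{*}+1 < \lambda$), $S$ is stationary in $\kappa$; moreover $S \subseteq E^\kappa_{{>}\mu}$ because $\mu < \lambda$, and $S \subseteq S^{\vec{\calC}}_{{>}i^\gamma}$ for every $\gamma < \mu$ because $i^\gamma \leq i^{*}$. Finally, for each $\gamma < \mu$ with $\height{\TTT^{\vec{\calC}}_{i^\gamma}}{} = \kappa$, Lemma \ref{lemma:TreeIndexedSquareSpecialSubset} shows that $S^{\vec{\calC}}_{{>}i^\gamma}$ is non-stationary with respect to $\TTT^{\vec{\calC}}_{i^\gamma}$, and restricting the witnessing regressive map together with the accompanying functions to $\TTT^{\vec{\calC}}_{i^\gamma} \restriction S$ shows the same for its subset $S$. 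Lemma \ref{knaster_lemma} then yields that $\prod_{\gamma < \mu} \PPP(\TTT^{\vec{\calC}}_{i^\gamma})$ is $\kappa$-Knaster, completing (1); the cases $\mu = 0$ and $\mu$ finite are handled verbatim by the same argument.

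I do not anticipate a serious obstacle: essentially all the substance is packaged in Lemmas \ref{knaster_lemma} and \ref{lemma:TreeIndexedSquareSpecialSubset}, and what remains is bookkeeping. The two points that require genuine care are the verification that each $\TTT^{\vec{\calC}}_i$ is extensional at limit levels (the supremum argument above) and the identity, underlying the reduction in (1), between compatibility in the lottery sum $\PPP_{\vec{\calC}}$ of conditions with a common second coordinate $i$ and compatibility in $\PPP(\TTT^{\vec{\calC}}_i)$ — this is exactly what allows the $\kappa$-Knaster property to transfer from $\prod_{\gamma < \mu} \PPP(\TTT^{\vec{\calC}}_{i^\gamma})$ to $\PPP_{\vec{\calC}}^\mu$.
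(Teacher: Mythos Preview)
Your proposal is correct and follows essentially the same approach as the paper: for (1) you stabilize the lottery coordinates using $\lambda^\mu<\kappa$ and reduce to Lemma~\ref{knaster_lemma} via Lemma~\ref{lemma:TreeIndexedSquareSpecialSubset}, and for (2) you build an explicit antichain indexed by $\acc(\kappa)$ using singleton conditions $\{\langle\alpha,0\rangle\}$. The only differences are cosmetic---you place $\mathbbm{1}$ in coordinates $i<i(\alpha)$ where the paper uses $\langle\{\langle\alpha,0\rangle\},i(\alpha)\rangle$, and you explicitly verify extensionality at limit levels for the trees $\TTT^{\vec{\calC}}_i$, a hypothesis of Lemma~\ref{knaster_lemma} that the paper invokes without comment.
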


\begin{proof}
  (1) Let $\seq{p_\alpha}{\alpha<\kappa}$ be a sequence of conditions in $\PPP_{\vec{\calC}}^\mu$. 
   Since $\lambda^\mu < \kappa$ holds, we may assume that there is a function $\map{f}{\mu}{\lambda}$ and a sequence $\seq{q_\alpha}{\alpha<\kappa}$ of conditions in the full support product $\prod_{\gamma<\mu}\PPP(\TTT^{\vec{\calC}}_{f(\gamma)})$ with the property that $p_\alpha(\gamma)=\langle q_\alpha(\gamma),f(\gamma)\rangle$ holds for all $\alpha<\kappa$ and $\gamma<\mu$. Set $i_* = \lub(\ran{f})<\lambda$.  
   In this situation, Lemma \ref{lemma:TreeIndexedSquareSpecialSubset} shows that for every $\gamma<\mu$ with the property that the tree $\TTT_{f(\gamma)}^{\vec{\calC}}$ has height $\kappa$, the set $S^{\vec{\calC}}_{{\geq}i_*}\subseteq S^{\vec{\calC}}_{{>}f(\gamma)}$ is non-stationary with respect to $\TTT_{f(\gamma)}^{\vec{\calC}}$. 
   Since our assumptions imply that the set $E^\kappa_{{\geq}\lambda}\cap S^{\vec{\calC}}_{{\geq}i_*}$ is stationary in $\kappa$, we can apply Lemma \ref{knaster_lemma} to conclude that the product $\prod_{\gamma<\mu}\PPP(\TTT^{\vec{\calC}}_{f(\gamma)})$ is $\kappa$-Knaster. Hence there is an unbounded subset $U$ of $\kappa$ such that the sequence $\seq{q_\alpha}{\alpha\in U}$ consists of pairwise compatible conditions in $\prod_{\gamma<\mu}\PPP(\TTT^{\vec{\calC}}_{f(\gamma)})$ and this implies that the sequence $\seq{p_\alpha}{\alpha\in U}$ consists of pairwise compatible conditions in $\PPP_{\vec{\calC}}^\mu$.

  (2) Given $\alpha\in\acc(\kappa)$ and $i<\lambda$, the function $\{\langle\alpha,0\rangle\}$ is a condition in the partial order $\PPP(\TTT^{\vec{\calC}}_{\max\{i,i(\alpha)\}})$. This shows that for every $\alpha\in\acc(\kappa)$, there is a  unique condition $p_\alpha$ in $\PPP_{\vec{\calC}}^\lambda$ with $p_\alpha(i)=\langle\{\langle\alpha,0\rangle\},\max\{i,i(\alpha)\}\rangle$ for all $i<\lambda$. 
  Fix $\alpha,\beta\in\acc(\kappa)$ with $\alpha<\beta$. 
  Then there is $i(\beta)\leq i < \lambda$ such that $\alpha \in \acc(C_{\beta,i})$. This implies that  $i\geq i(\alpha)$, $C_{\alpha,i}=C_{\beta,i}\cap\alpha$ and $\alpha<_{\TTT^{\vec{\calC}}_i}\beta$. We can conclude that the conditions $p_\alpha(i)=\langle\{\langle\alpha,0\rangle\},i\rangle$ and $p_\beta(i)=\langle\{\langle\beta,0\rangle\},i\rangle$ are incompatible in $\PPP_\calC$ and therefore the condition $p_\alpha$ and $p_\beta$ are incompatible in $\PPP_\calC^\lambda$.  These computations show that the sequence $\seq{p_\alpha}{\alpha\in\acc(\kappa)}$ enumerates an antichain in $\PPP_{\vec{\calC}}^\lambda$.  
\end{proof}

The statement of Theorem \ref{square_productivity_thm} now follows directly from an application of Theorem \ref{square_building_thm} with $S=E^\kappa_{{\geq}\lambda}$ and Lemma \ref{lottery_sum_lemma}. Moreover, by combining the above with the results of \cite{MR3620068}, we can show that $\square(\kappa)$ implies the existence of a $\kappa$-Knaster partial order that is not $\kappa$-stationarily layered.

\begin{proof}[Proof of Theorem \ref{KnasterLayered_thm}]
 Assume that $\kappa$ is an uncountable regular cardinal with the property that every $\kappa$-Knaster partial order is $\kappa$-stationarily layered. Then {\cite[Theorem 1.11]{MR3620068}} shows that $\kappa$ is a Mahlo cardinal with the property that every stationary subset of $\kappa$ reflects. Assume, towards a contradiction, that $\square(\kappa)$ holds.  In this situation, we can apply Theorem \ref{square_building_thm} to obtain a $\square^{\mathrm{ind}}(\kappa,\aleph_0)$-sequence $\vec{\calC}=\seq{C_{\alpha, i}}{\alpha < \kappa, ~ i(\alpha) \leq i < \omega}$ with the property that there exists a $\square(\kappa)$-sequence $\seq{D_\alpha}{\alpha<\kappa}$ such that $\acc(D_\alpha)\subseteq\acc(C_{\alpha,i(\alpha)})$ holds for all $\alpha\in\acc(\kappa)$. 
 
 \begin{claim*}
  The set $\Set{\alpha\in\acc(\kappa)}{\otp(D_\alpha)<\alpha}$ is not stationary in $\kappa$. 
 \end{claim*}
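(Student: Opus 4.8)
The plan is to argue by contradiction, using Fodor's lemma together with the stationary reflection available at $\kappa$. Recall that, by the part of the proof of Theorem \ref{KnasterLayered_thm} already carried out (via {\cite[Theorem 1.11]{MR3620068}}), the cardinal $\kappa$ is Mahlo and every stationary subset of $\kappa$ reflects, and that $\seq{D_\alpha}{\alpha<\kappa}$ is a $\square(\kappa)$-sequence. Write $S=\Set{\alpha\in\acc(\kappa)}{\otp(D_\alpha)<\alpha}$ and suppose, toward a contradiction, that $S$ is stationary in $\kappa$. The map $\alpha\mapsto\otp(D_\alpha)$ is then regressive on $S$, so by Fodor's lemma there are a stationary set $S_0\subseteq S$ and an ordinal $\eta<\kappa$ such that $\otp(D_\alpha)=\eta$ for every $\alpha\in S_0$.

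Applying stationary reflection to $S_0$, I would fix an ordinal $\beta<\kappa$ with $\cf(\beta)>\omega$ such that $S_0\cap\beta$ is stationary in $\beta$. Since $S_0\cap\beta$ is stationary in $\beta$, the ordinal $\beta$ is a limit of limit ordinals, so $\beta\in\acc(\kappa)$ and $D_\beta$ is a club subset of $\beta$; because $\cf(\beta)>\omega$, the set $\acc(D_\beta)$ of limit points of $D_\beta$ is again club in $\beta$. Hence $S_0\cap\acc(D_\beta)$ is stationary in $\beta$, and in particular contains at least two distinct ordinals. On the other hand, if $\alpha\in S_0\cap\acc(D_\beta)$, then the coherence of the $\square(\kappa)$-sequence gives $D_\alpha=D_\beta\cap\alpha$, so that $\otp(D_\beta\cap\alpha)=\otp(D_\alpha)=\eta$. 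Thus any such $\alpha$ is the unique element of $D_\beta$ whose set of $D_\beta$-predecessors has order type $\eta$ (and such an $\alpha$ exists only when $\eta<\otp(D_\beta)$). Therefore $S_0\cap\acc(D_\beta)$ has at most one element, contradicting the previous sentence; this contradiction shows that $S$ is nonstationary.

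The argument is short, and the combinatorial heart is simply that, along $\acc(D_\beta)$, coherence pins down $\otp(D_\alpha)$ as $\otp(D_\beta\cap\alpha)$, a value that can equal the fixed ordinal $\eta$ for at most one $\alpha$. The only point requiring care — and the step I would flag as the main obstacle, such as it is — is the passage to the reflection point: one must ensure that the reflection is witnessed at an ordinal of uncountable cofinality, since for $\cf(\beta)=\omega$ one could have $\acc(D_\beta)=\emptyset$ and lose the club intersected with $S_0\cap\beta$. This is exactly what full stationary reflection at the Mahlo cardinal $\kappa$ guarantees, so no genuine difficulty arises.
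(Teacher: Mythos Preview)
Your proof is correct and follows essentially the same approach as the paper: assume stationarity, apply Fodor to fix the order type, reflect the resulting stationary set to a point of uncountable cofinality, and derive a contradiction from coherence along accumulation points of the $D$-club there. The only differences are cosmetic (the paper names the reflection point $\alpha$ and the two witnesses $\gamma<\beta$, and phrases the contradiction as $\xi=\otp(D_\gamma)<\otp(D_\beta)=\xi$ rather than ``at most one element''), so there is nothing substantive to add.
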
 
 
 \begin{proof}[Proof of the Claim]
  Assume, for a contradiction, that the set is stationary in $\kappa$. Then a pressing down argument yields $\xi<\kappa$ such that the set $$E ~ = ~ \Set{\alpha\in\acc(\kappa)}{\otp(D_\alpha)=\xi}$$ is stationary in $\kappa$. By the above remarks, there is an $\alpha<\kappa$ such that $\cof(\alpha)>\omega$ and the set $E\cap\alpha$ is stationary in $\alpha$. But then we can find $\beta,\gamma\in\acc(D_\alpha)\cap E$ with $\gamma<\beta$. This implies that $D_\gamma=D_\beta\cap\gamma$ and hence $\xi=\otp(D_\gamma)<\otp(D_\beta)=\xi$, a contradiction. 
 \end{proof}
 
 By the above claim, we can find a club $C$ in $\kappa$ consisting of strong limit cardinals such that $\otp(D_\alpha)=\alpha$ holds for all $\alpha\in C$. Let $\PPP_{\vec{\calC}}$ denote the partial order defined in Definition \ref{definition:IndSquareKnasterPO}. Then a combination of Theorem \ref{square_building_thm} with Lemma \ref{lottery_sum_lemma} implies that $\PPP_{\vec{\calC}}$ is $\kappa$-Knaster and, by our assumption, this shows that $\PPP_{\vec{\calC}}$ is $\kappa$-stationarily layered. Pick a sufficiently large regular cardinal $\theta>\kappa$. Then {\cite[Lemma 2.3]{MR3620068}} shows that there is an elementary substructure $M$ of $\HH{\theta}$ of cardinality less than $\kappa$ and $\alpha\in C$ such that $\alpha=\kappa\cap M$, $\vec{\calC}\in M$, and $\PPP_{\vec{\calC}}\cap M$ is a regular suborder of $\PPP_{\vec{\calC}}$. Set $p_0=\{\langle\alpha,0\rangle\}$. Then $p=\langle p_0,i(\alpha)\rangle$ is a condition $\PPP_{\vec{\calC}}$ and there is a reduct $q$ of $p$ in $\PPP_{\vec{\calC}}\cap M$, i.e., $q\in M$ is a condition in $\PPP_{\vec{\calC}}$ with the property that for every $r\in\PPP_{\vec{\calC}}\cap M$ with $r\leq_{\PPP_{\vec{\calC}}}q$, the conditions $p$ and $r$ are compatible in $\PPP_{\vec{\calC}}$. Then there is a condition $q_0$ in $\PPP(\TTT^{\vec{\calC}}_{i(\alpha)})\cap M$ with $q=\langle q_0,i(\alpha)\rangle$. Since the conditions $p_0$ and $q_0$ are compatible in $\PPP(\TTT^{\vec{\calC}}_{i(\alpha)})$, we know that $q_0(\beta)\neq 0$ holds for all $\beta\in\dom(q_0)$ with $\beta<_{\TTT^{\vec{\calC}}_{i(\alpha)}}\alpha$. Since $\alpha=\otp(D_\alpha)$ is a cardinal and $\dom(q_0)$ is a finite subset of $\alpha$, there is a $\gamma\in\acc(D_\alpha)$ with $\dom(q_0)\subseteq\gamma$. Then $\gamma\in\acc(C_{\alpha,i(\alpha)})$, $i(\gamma)\leq i(\alpha)$ and $\gamma<_{\TTT^{\vec{\calC}}_{i(\alpha)}}\alpha$. Moreover, the above remarks show that $r=\langle q_0\cup\{\langle\gamma,0\rangle\},i(\alpha)\rangle\in M$ is a condition in $\PPP_{\vec{\calC}}$ that strengthens $q$. But this implies that the conditions $p$ and $r$ are compatible in $\PPP_{\vec{\calC}}$, a contradiction. 
\end{proof}

  The proof of the following result is similar to that of Theorem \ref{all_ascent_path_thm} presented in Section \ref{ConsResultsTrees}.

\begin{theorem}
  Let $\kappa$ be an inaccessible cardinal with $\mathbbm{1}_{\Add{\kappa}{1}}\Vdash$\anf{\textit{$\check{\kappa}$ is weakly compact}}. If   $\lambda < \kappa$ is an infinite, regular cardinal, then the following statements hold in  a cofinality-preserving forcing extension of the ground model:
  \begin{enumerate}
    \item There is a $\kappa$-Knaster partial order $\PPP$ such that $\PPP^\mu$ is $\kappa$-Knaster for all 
      $\mu < \lambda$, but $\PPP^\lambda$ does not satisfy the $\kappa$-chain condition. 
      
    \item If $\RRR$ is a partial order with the property that $\RRR^\lambda$ satisfies the $\kappa$-chain condition, then $\RRR^\theta$ satisfies the $\kappa$-chain condition for all   $\theta < \kappa$. 
  \end{enumerate}
\end{theorem}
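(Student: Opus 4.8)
The plan is to take as the desired extension $\VV[G]$, where $G$ is generic for the forcing $\PPP(\kappa,\lambda)$ of Definition \ref{ind_square_forcing_def}. Since $\kappa$ is inaccessible one checks easily that $|\PPP(\kappa,\lambda)| \leq \kappa$, so this forcing is $\kappa^+$-c.c., and by Lemma \ref{lemma:PropertiesGenericIndexedSquare} it is also $\kappa$-strategically closed; hence it preserves all cofinalities. In $\VV[G]$ the sequence $\vec{\calC} = \bigcup G$ is a $\square^{\mathrm{ind}}(\kappa,\lambda)$-sequence, and the stationarity clause of Lemma \ref{lemma:PropertiesGenericIndexedSquare} (applied with $\mu = \lambda$) shows that $E^\kappa_{{\geq}\lambda} \cap S^{\vec{\calC}}_{{\geq}i}$ is stationary in $\kappa$ for every $i<\lambda$, so Lemma \ref{lottery_sum_lemma} is applicable to $\vec{\calC}$. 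For statement $(1)$ I would simply let $\PPP = \PPP_{\vec{\calC}}$ be the partial order of Definition \ref{definition:IndSquareKnasterPO}; inaccessibility of $\kappa$ makes the cardinal-arithmetic hypothesis of Lemma \ref{lottery_sum_lemma}$(1)$ automatic, so that lemma yields at once that $\PPP^\mu$ is $\kappa$-Knaster for every $\mu < \lambda$ (with $\mu = 1$ this includes $\PPP$ itself) while $\PPP^\lambda$ fails the $\kappa$-chain condition by Lemma \ref{lottery_sum_lemma}$(2)$.

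For statement $(2)$ the argument runs parallel to the proof of Theorem \ref{all_ascent_path_thm}. For $i < \lambda$ write $\QQQ_i = \TTT_i(\vec{\calC})$ and let $\pi_{i,j}$ ($i<j<\lambda$) be the projections of Lemma \ref{lem: amalgamation for indexed square}. By Lemma \ref{lem: amalgamation for indexed square}$(1)$ and $\kappa = \kappa^{{<}\kappa}$, each $\PPP(\kappa,\lambda) * \dot{\QQQ}_i$ has a dense $\kappa$-directed closed subset of size $\kappa$, hence is forcing equivalent to $\Add{\kappa}{1}$; since $\mathbbm{1}_{\Add{\kappa}{1}} \Vdash$ ``$\check{\kappa}$ is weakly compact'', it follows that $\mathbbm{1}_{\QQQ_i} \Vdash$ ``$\check{\kappa}$ is weakly compact'' in $\VV[G]$. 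Now fix in $\VV[G]$ a partial order $\RRR$ with $\RRR^\lambda$ satisfying the $\kappa$-chain condition and a cardinal $\theta < \kappa$; the goal is to show that $\RRR^\theta$ satisfies the $\kappa$-chain condition. The case $\theta \leq \lambda$ is immediate, since then $\RRR^\theta$ embeds into $\RRR^\lambda$ by an incompatibility-preserving map, so assume $\lambda < \theta < \kappa$ and, toward a contradiction, that $\langle p_\alpha \mid \alpha < \kappa \rangle$ enumerates an antichain in $\RRR^\theta$. Let $f \colon [\kappa]^2 \to \theta$ send $\{\alpha,\beta\}$ to the least $i < \theta$ with $p_\alpha(i) \perp_\RRR p_\beta(i)$. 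For each $i < \lambda$, weak compactness of $\kappa$ in the $\QQQ_i$-extension (together with $\kappa \to (\kappa)^2_{|\theta|}$ there) lets us fix $\QQQ_i$-names $\dot{H}_i, \dot{c}_i$ with $\mathbbm{1}_{\QQQ_i} \Vdash$ ``$\dot{H}_i$ is an $\check{f}$-homogeneous subset of $\check{\kappa}$ of size $\check{\kappa}$ with homogeneous colour $\dot{c}_i < \check{\theta}$''; let $\dot{h}_{\alpha,i}$ name the $\alpha$-th element of $\dot{H}_i$. Using Lemma \ref{deciding_lemma} I would fix $s^* \in \QQQ_0$ such that $\pi_{0,i}(s^*)$ decides $\dot{c}_i$, say as $c_i^*$, for all $i<\lambda$, and then, for each $\alpha < \kappa$, a condition $q_\alpha \leq_{\QQQ_0} s^*$ such that $\pi_{0,i}(q_\alpha)$ decides $\dot{h}_{\alpha,i}$, say as $h_{\alpha,i}$, for all $i < \lambda$; write $q_\alpha = C_{\gamma_\alpha, 0}$ with $\gamma_\alpha \in \acc(\kappa)$.

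The final step, mirroring the definition $b_\alpha(i) = x_{\alpha,i}$ in the proof of Theorem \ref{all_ascent_path_thm}, is to set $a_\alpha(i) = p_{h_{\alpha,i}}(c_i^*)$ for $i < \lambda$ and to verify that $\langle a_\alpha \mid \alpha < \kappa \rangle$ is an antichain in $\RRR^\lambda$ of size $\kappa$, contradicting the choice of $\RRR$. Given $\alpha \neq \beta$, say with $\gamma_\alpha \leq \gamma_\beta$, use clause $(5)$ of Definition \ref{ind_square_def} to pick $i < \lambda$ with $\gamma_\alpha \in \acc(C_{\gamma_\beta, i})$ (taking $i = 0$ if $\gamma_\alpha = \gamma_\beta$); then for all $i \leq j < \lambda$ the coherence clauses $(3)$ and $(4)$ of Definition \ref{ind_square_def} give $\pi_{0,j}(q_\beta) = C_{\gamma_\beta, j} \leq_{\QQQ_j} C_{\gamma_\alpha, j} = \pi_{0,j}(q_\alpha) \leq_{\QQQ_j} \pi_{0,j}(s^*)$. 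Hence $\pi_{0,j}(q_\beta)$ forces that $h_{\alpha,j}$ and $h_{\beta,j}$ are distinct elements of $\dot{H}_j$ and that $\dot{H}_j$ is $c_j^*$-homogeneous for $\check{f}$, and therefore forces $\check{f}(\{h_{\alpha,j}, h_{\beta,j}\}) = \check{c}_j^*$; as this last statement concerns only $\VV[G]$ and $\pi_{0,j}(q_\beta)$ is a genuine condition, it holds in $\VV[G]$, so $a_\alpha(j) = p_{h_{\alpha,j}}(c_j^*) \perp_\RRR p_{h_{\beta,j}}(c_j^*) = a_\beta(j)$ by the definition of $f$. Thus $a_\alpha \perp_{\RRR^\lambda} a_\beta$, and in particular $a_\alpha \neq a_\beta$, so $\langle a_\alpha \mid \alpha < \kappa \rangle$ is as claimed.

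I expect the main obstacle to be exactly this amalgamation: getting the homogeneous sets produced by weak compactness in the various threading extensions $\VV[G * \dot{K}_i]$ to cohere, via Lemma \ref{deciding_lemma} and the projections $\pi_{0,j}$, into a single $\kappa$-antichain of $\RRR^\lambda$ that already lives in $\VV[G]$ — bearing in mind that $\kappa$ is not weakly compact in $\VV[G]$ itself, since it carries $\square^{\mathrm{ind}}(\kappa,\lambda)$, so passing through the threading forcings is unavoidable. The remaining points — cofinality preservation, the bound $|\PPP(\kappa,\lambda)| \leq \kappa$, and the passage from ``$\PPP(\kappa,\lambda) * \dot{\QQQ}_i$ has a dense $\kappa$-directed closed subset'' to ``$\PPP(\kappa,\lambda) * \dot{\QQQ}_i$ is forcing equivalent to $\Add{\kappa}{1}$'' — are routine and already appear in the proofs cited above.
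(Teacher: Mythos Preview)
Your proof is correct, and for part~(1) it coincides with the paper's argument. For part~(2), you take a genuinely different route. The paper does not argue by contradiction from an antichain in $\RRR^\theta$; instead it isolates the following claim: there exist $i<\lambda$ and $q\in\QQQ_i$ such that $q$ forces $\RRR$ itself to remain $\kappa$-cc. The claim is proved by the same amalgamation machinery you use (Lemma~\ref{deciding_lemma} and the projections $\pi_{0,j}$): assuming the claim fails, one picks for each $i$ a $\QQQ_i$-name for a $\kappa$-antichain in $\RRR$, decides its members uniformly, and assembles the decided values into an antichain in $\RRR^\lambda$. Once the claim is established, one simply passes to a $\QQQ_i$-generic extension below $q$, where $\kappa$ is weakly compact and $\RRR$ is $\kappa$-cc; weak compactness then gives that $\RRR^\theta$ is $\kappa$-Knaster there, and downward absoluteness of the $\kappa$-cc (together with ${<}\kappa$-distributivity of $\QQQ_i$, so that $(\RRR^\theta)^{\VV[G]}=(\RRR^\theta)^{\VV[G,H]}$) finishes the argument.

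The trade-off: the paper's route is more modular---it isolates a reusable structural fact (some threading extension preserves the $\kappa$-cc of $\RRR$) and then invokes weak compactness as a black box, avoiding any mention of $\theta$ in the amalgamation step. Your route unwraps the use of weak compactness via the partition relation $\kappa\to(\kappa)^2_\theta$ applied to the incompatibility coloring $f$ of the putative antichain in $\RRR^\theta$, and amalgamates the resulting homogeneous sets directly. This is a bit more hands-on (you have to track both the colours $\dot{c}_i$ and the enumerations $\dot{h}_{\alpha,i}$), but it is self-contained and makes no appeal to general facts about productivity of chain conditions under weak compactness.
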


\begin{proof}
 Let $\PPP = \PPP(\kappa, \lambda)$ be the forcing notion from Definition \ref{ind_square_forcing_def} that adds a $\square^{\mathrm{ind}}(\kappa, \lambda)$-sequence, let $\dot{\calC}$ be a $\PPP$-name for the generically-added $\square^{\mathrm{ind}}(\kappa, \lambda)$-sequence, and, for all $i < \lambda$, let $\dot{\QQQ}_i$ be a $\PPP$-name for the partial order $\TTT_i(\dot{\calC})$ defined in Definition \ref{definition:POthreadIndexedSquare}.  If $i < \lambda$, then  our assumptions imply that $\kappa$ is weakly compact in all $(\PPP * \dot{\QQQ}_i)$-generic extensions of $\VV$.

  Let $G$ be $\PPP$-generic over $V$, and let $\dot{\calC}^G = \vec{\calC} = \seq{C_{\alpha, i}}{\alpha < \kappa, ~ i(\alpha) \leq i < \lambda}$   be the realization of $\dot{\calC}$. Given $i < j < \lambda$, set $\QQQ_i=\dot{\QQQ}_i^G$ and let $\map{\pi_{i,j}}{\QQQ_i}{\QQQ_j}$ be the projection map given by $\pi_{i,j}(C_{\alpha, i})=C_{\alpha, j}$. We claim that $\VV[G]$ is the desired forcing extension. 
  By Lemma \ref{lemma:PropertiesGenericIndexedSquare}, the set  $E^\kappa_{{\geq}\lambda}\cap S^{\vec{\calC}}_{{\geq}i}$ is a stationary subset of $\kappa$ in $\VV[G]$ for all $i<\lambda$. In this situation, Lemma \ref{lottery_sum_lemma} shows that the partial order $\PPP_{\vec{\calC}}$ from Definition \ref{definition:IndSquareKnasterPO} witnesses that the above statement (1) holds. 

  Let us now show that requirement (2) holds in $\VV[G]$. To this end, work in $\VV[G]$ and fix a partial order $\RRR$ such that $\RRR^\lambda$ satisfies   the $\kappa$-chain condition. 

  \begin{claim*}
    There is an $i < \lambda$ and a condition $q$ in $\QQQ_i$ such that $$q \Vdash_{\QQQ_i}\anf{\textit{$\check{\RRR}$ satisfies the $\check{\kappa}$-chain condition}}.$$
  \end{claim*}

  \begin{proof}[Proof of the Claim]
    Suppose not. Given $i < \lambda$, this assumption yields  a sequence $\seq{\dot{r}_{i, \eta}}{\eta < \kappa}$     of $\QQQ_i$-names for elements of $\bb{R}$ such that $$\mathbbm{1}_{\QQQ_i}\Vdash\anf{\textit{The conditions $\dot{r}_{i,\eta}$ and $\dot{r}_{i,\xi}$ are incompatible in $\check{\RRR}$}}$$ for all $\eta<\xi<\kappa$.     For each $\eta < \kappa$, use Lemma \ref{deciding_lemma} to find $q_\eta \in \QQQ_0$  and a sequence $\seq{r_{i, \eta}}{i<\lambda}$ of conditions in $\RRR$  such that $\pi_{0,i}(q_\eta)\Vdash_{\QQQ_i}\anf{\dot{r}_{i,\eta}=\check{r}_{i,\eta}}$  for all $i < \lambda$. Given $\eta < \kappa$, pick $\alpha_\eta \in \acc(\kappa)$  with $q_\eta = C_{\alpha_\eta, 0}$. 

    For each $\eta < \lambda$, let $s_\eta$ denote the unique condition in $\RRR^\lambda$ with $s_\eta(i) = r_{i, \eta}$ for all $i < \lambda$. 
    Fix $\eta,\xi < \lambda$ with $\alpha_\eta < \alpha_\xi$ and $i < \lambda$ with $\alpha_\eta \in \acc(C_{\alpha_\xi,j})$ for all $i \leq j < \lambda$. Given $i \leq j < \lambda$, we then have $\pi_{0,i}(q_\xi) \leq_{\QQQ_i} \pi_{0,i}(q_\eta)$ and therefore $$\pi_{0,i}(q_\xi) \Vdash_{\QQQ_i}\anf{\textit{$\dot{r}_{i, \eta} = \check{r}_{i,\eta}$  and  $\dot{r}_{i, \xi} = \check{r}_{i,\xi}$}}.$$   In particular, the conditions $s_\eta(i)$ and $s_\xi(i)$ are incompatible in $\RRR$, and therefore the conditions  $s_\eta$ and $s_\xi$ are incompatible     in $\RRR^\lambda$.     But this shows that $\Set{s_\eta}{\eta < \kappa}$ is an antichain in $\RRR^\lambda$ of size $\kappa$,     contradicting our assumption that $\RRR^\lambda$ satisfies the $\kappa$-chain condition. 
  \end{proof}

  Fix $i$ and $q$ as given in the claim, and $\theta<\kappa$. Let $H$ be $\QQQ_i$-generic over $\VV[G]$ with $q \in H$. In $\VV[G,H]$, $\kappa$ is weakly compact  and $\RRR$ satisfies the $\kappa$-chain condition. By the weak  compactness of $\kappa$, it follows that $\RRR^\theta$ is $\kappa$-Knaster in $\VV[G,H]$. Since $\QQQ_i$ is ${<}\kappa$-distributive in $\VV[G]$, we have $(\RRR^\theta)^{\VV[G]}=(\RRR^\theta)^{\VV[G,H]}$. Moreover, since the property of satisfying the $\kappa$-chain condition is easily seen   to be downward absolute, we can conclude that $\RRR^\theta$ satisfies the $\kappa$-chain condition   in $\VV[G]$.
\end{proof}



\bibliographystyle{amsplain}
\bibliography{ascent_paths}

\providecommand{\bysame}{\leavevmode\hbox to3em{\hrulefill}\thinspace}
\providecommand{\MR}{\relax\ifhmode\unskip\space\fi MR }
\providecommand{\MRhref}[2]{%
  \href{http://www.ams.org/mathscinet-getitem?mr=#1}{#2}
}
\providecommand{\href}[2]{#2}
\begin{thebibliography}{10}

\bibitem{MR823775}
James~E. Baumgartner, \emph{Iterated forcing}, Surveys in set theory, London
  Math. Soc. Lecture Note Ser., vol.~87, Cambridge Univ. Press, Cambridge,
  1983, pp.~1--59. \MR{823775 (87c:03099)}

\bibitem{MR3596614}
Ari~Meir Brodsky and Assaf Rinot, \emph{Reduced powers of {S}ouslin trees},
  Forum Math. Sigma \textbf{5} (2017), e2, 82. \MR{3596614}

\bibitem{Cox_Layerings}
Sean Cox, \emph{Layered posets and {K}unen's universal collapse}, to appear in
  the \emph{{N}otre {D}ame {J}ournal of {F}ormal {L}ogic}.

\bibitem{MR3620068}
Sean Cox and Philipp L\"ucke, \emph{Characterizing large cardinals in terms of
  layered posets}, Ann. Pure Appl. Logic \textbf{168} (2017), no.~5,
  1112--1131. \MR{3620068}

\bibitem{MR1376756}
James Cummings, \emph{Souslin trees which are hard to specialise}, Proc. Amer.
  Math. Soc. \textbf{125} (1997), no.~8, 2435--2441. \MR{1376756 (97j:03095)}

\bibitem{MR2768691}
\bysame, \emph{Iterated forcing and elementary embeddings}, Handbook of set
  theory. {V}ols. 1, 2, 3, Springer, Dordrecht, 2010, pp.~775--883.
  \MR{2768691}

\bibitem{MR1838355}
James Cummings, Matthew Foreman, and Menachem Magidor, \emph{Squares, scales
  and stationary reflection}, J. Math. Log. \textbf{1} (2001), no.~1, 35--98.
  \MR{1838355}

\bibitem{MR1942302}
James Cummings and Ernest Schimmerling, \emph{Indexed squares}, Israel J. Math.
  \textbf{131} (2002), 61--99. \MR{1942302}

\bibitem{MR732661}
Keith~J. Devlin, \emph{Reduced powers of {$\aleph _{2}$}-trees}, Fund. Math.
  \textbf{118} (1983), no.~2, 129--134. \MR{732661 (85i:03156)}

\bibitem{hayut_lh}
Yair Hayut and Chris Lambie-Hanson, \emph{Simultaneous stationary reflection
  and square sequences}, submitted.

\bibitem{jensen_fine_structure}
R.~Bj\"orn Jensen, \emph{The fine structure of the constructible hierarchy},
  Ann. Math. Logic \textbf{4} (1972), 229--308; erratum, ibid. 4 (1972), 443,
  With a section by Jack Silver. \MR{0309729}

\bibitem{MR495118}
Kenneth Kunen, \emph{Saturated ideals}, J. Symbolic Logic \textbf{43} (1978),
  no.~1, 65--76. \MR{495118 (80a:03068)}

\bibitem{ChrisTreesSquareReflection}
Chris Lambie-Hanson, \emph{{A}ronszajn trees, square principles, and stationary
  reflection}, to appear in \emph{{M}athematical {L}ogic {Q}uarterly}.

\bibitem{systems}
\bysame, \emph{Squares and narrow systems}, to appear in \emph{{T}he {J}ournal
  of {S}ymbolic {L}ogic}.

\bibitem{MR3129734}
\bysame, \emph{Squares and covering matrices}, Ann. Pure Appl. Logic
  \textbf{165} (2014), no.~2, 673--694. \MR{3129734}

\bibitem{MR603771}
Richard Laver and Saharon Shelah, \emph{The {$\aleph _{2}$}-{S}ouslin
  hypothesis}, Trans. Amer. Math. Soc. \textbf{264} (1981), no.~2, 411--417.
  \MR{603771}

\bibitem{ascending_paths}
Philipp L\"ucke, \emph{Ascending paths and forcings that specialize higher
  {A}ronszajn trees}, Fund. Math. \textbf{239} (2017), no.~1, 51--84.
  \MR{3667758}

\bibitem{MR0313057}
William Mitchell, \emph{Aronszajn trees and the independence of the transfer
  property}, Ann. Math. Logic \textbf{5} (1972/73), 21--46. \MR{0313057}

\bibitem{MR3271280}
Assaf Rinot, \emph{Chain conditions of products, and weakly compact cardinals},
  Bull. Symb. Log. \textbf{20} (2014), no.~3, 293--314. \MR{3271280}

\bibitem{MR3523658}
Assaf Shani, \emph{Fresh subsets of ultrapowers}, Arch. Math. Logic \textbf{55}
  (2016), no.~5-6, 835--845. \MR{3523658}

\bibitem{MR964870}
Saharon Shelah and Lee Stanley, \emph{Weakly compact cardinals and nonspecial
  {A}ronszajn trees}, Proc. Amer. Math. Soc. \textbf{104} (1988), no.~3,
  887--897. \MR{964870}

\bibitem{MR657114}
Stevo Todor\v{c}evi\'{c}, \emph{Stationary sets, trees and continuums}, Publ.
  Inst. Math. (Beograd) (N.S.) \textbf{29(43)} (1981), 249--262. \MR{657114}

\bibitem{MR793235}
\bysame, \emph{Partition relations for partially ordered sets}, Acta Math.
  \textbf{155} (1985), no.~1-2, 1--25. \MR{793235 (87d:03126)}

\bibitem{MR908147}
\bysame, \emph{Partitioning pairs of countable ordinals}, Acta Math.
  \textbf{159} (1987), no.~3-4, 261--294. \MR{908147}

\bibitem{MR929410}
\bysame, \emph{Special square sequences}, Proc. Amer. Math. Soc. \textbf{105}
  (1989), no.~1, 199--205. \MR{929410 (89e:03082)}

\bibitem{MR2965421}
Stevo Todor\v{c}evi\'{c} and V\'{i}ctor Torres~P\'{e}rez, \emph{Conjectures of
  {R}ado and {C}hang and special {A}ronszajn trees}, Math. Log. Q. \textbf{58}
  (2012), no.~4-5, 342--347. \MR{2965421}

\end{thebibliography}


\end{document}